\documentclass[11pt]{amsart}
\usepackage{graphicx,amsfonts,amssymb,amsmath,amsthm,url,
  verbatim,amscd, fullpage}
  \usepackage{color}
\usepackage[usenames,dvipsnames]{xcolor}
\usepackage[normalem]{ulem}
\usepackage{pdfsync}
\usepackage{booktabs}
\usepackage[hyperfootnotes=false, colorlinks, citecolor=RoyalBlue,
urlcolor=blue, linkcolor=blue ]{hyperref}

\let\origmaketitle\maketitle
\def\maketitle{
  \begingroup
  \def\uppercasenonmath##1{} 
  \let\MakeUppercase\relax 
  \origmaketitle
  \endgroup
  }

\newtheorem{lemma}{Lemma}[section]
\newtheorem{theorem}{Theorem}

\newtheorem{corollary}[lemma]{Corollary}
\newtheorem{proposition}[lemma]{Proposition}
\newtheorem{definition}[lemma]{Definition}

\newtheorem{remark}[lemma]{Remark}

\setlength{\evensidemargin}{.5in} \setlength{\oddsidemargin}{.5in}
\textwidth=5.5 true in

\allowdisplaybreaks

\renewcommand{\H}{\mathbb H}
\newcommand{\A}{{\mathbb A}}
\newcommand{\m}{{\rm max}}
\newcommand{\cond}{{\rm cond}}

\newcommand{\f}{{\mathbf f}}
\renewcommand{\c}{{\mathbf c}}

\newcommand{\J}{{\mathcal J}}
\newcommand{\Q}{{\mathbb Q}}
\newcommand{\Z}{{\mathbb Z}}

\renewcommand{\O}{{\mathcal O}}
\renewcommand{\L}{{\mathcal L}}

\newcommand{\R}{{\mathbb R}}
\newcommand{\C}{{\mathbb C}}
\newcommand{\bs}{\backslash}

\renewcommand{\S}{\mathfrak S}
\renewcommand{\P}{\mathcal P}

\newcommand{\GL}{{\rm GL}}

\newcommand{\SL}{{\rm SL}}
\newcommand{\SO}{{\rm SO}}

\newcommand{\sgn}{{\rm sgn}}

\newcommand{\ur}{{\bf ur}}

\newcommand{\trace}{{\rm tr}}

\newcommand{\tr}{{\rm Tr}}
\newcommand{\nr}{{\rm nr}}

\newcommand{\mat}[4]{{\setlength{\arraycolsep}{0.5mm}\left(
      \begin{array}{cc}#1&#2\\#3&#4\end{array}\right)}}


\def\SL{\operatorname{SL}}
\def\vol{\operatorname{vol}}
\def\GL{\operatorname{GL}}

\renewcommand{\Im}{\mathrm{Im}}
\def\eps{\epsilon}

\begin{document}

\bibliographystyle{plain}

\title[{Sup-norm in the level aspect for compact arithmetic surfaces}]{\Large{Sup-norms of eigenfunctions in the level aspect for compact arithmetic surfaces}}

\author[Abhishek Saha]{\large{Abhishek Saha}}
\address{School of Mathematical Sciences\\
  Queen Mary University of London\\
  London E1 4NS\\
  UK}

\begin{abstract} Let $D$ be an indefinite quaternion division algebra over $\Q$. We approach the problem of bounding the sup-norms of automorphic forms $\phi$ on $D^\times(\A)$ that belong to irreducible automorphic representations and transform via characters of unit groups of orders of $D$. We obtain a non-trivial upper bound for $\|\phi\|_\infty$ in the level aspect that is valid for \emph{arbitrary} orders. This generalizes and strengthens previously known upper bounds for $\|\phi\|_\infty$  in the setting of newforms for Eichler orders.  In the special case when the index of the order in a maximal order is a \emph{squarefull} integer $N$, our result specializes to $\|\phi\|_\infty \ll_{\pi_\infty, \eps} N^{1/3 + \eps} \|\phi\|_2$.

A key application of our result is to automorphic forms $\phi$ which correspond at the ramified primes to either minimal vectors (in the sense of \cite{HNS}), or $p$-adic microlocal lifts  (in the sense of \cite{nelson-padic-que}). For such forms, our bound specializes to  $\| \phi\|_\infty \ll_{ \eps} C^{\frac16 + \eps}\|\phi\|_2$ where $C$ is the  conductor of the representation $\pi$ generated by  $\phi$. This improves upon the previously known \emph{local bound} $\|\phi\|_\infty \ll_{\lambda, \eps} C^{\frac14 + \eps}\|\phi\|_2$ in these cases. 


\end{abstract}

\maketitle

\section{Introduction}\label{s:introduction}
\subsection{Background}Let $\phi=\otimes_v \phi_v$ be a  cuspidal automorphic form on $D^\times(\A)$ where $D$ is an indefinite quaternion  algebra over $\Q$. The sup-norm problem, which has seen a lot of interest recently, is concerned with bounding  $\frac{\|\phi\|_\infty}{\| \phi \|_2}$ in terms of natural parameters of $\phi$. When the primary focus is the dependance of the bound on parameters related to the ramified primes or to the underlying level structures associated to $\phi$ (with the dependance on the archimedean parameters suppressed), this is known as the level-aspect sup-norm problem.

In the split case where $D^\times = \GL_2$, this problem has been heavily studied in the special case that $\phi$ is a global \emph{newform} \cite{blomer-holowinsky, templier-sup, harcos-templier-1, harcos-templier-2, templier-sup-2, sahasuplevel, saha-sup-level-hybrid}. In this case,  $\phi$ transforms by a character of the subgroup\footnote{Technically, we need to consider the adelic counterpart of this subgroup.} $\Gamma_0(N)$ of norm 1 units in the standard Eichler order of level $N$, where $N$  equals the  conductor of the representation $\pi$ generated by $\phi$. The best upper bound currently known in the case of newforms on $\GL_2$  is due to the present author \cite{saha-sup-level-hybrid} and in the \emph{trivial  character} case this bound reads\footnote{As usual, we use the notation
$A \ll_{x,y,\ldots} B$
to signify that there exists
a positive constant $C$, depending at most upon $x,y,\ldots$,
so that
$|A| \leq C |B|$.}  \begin{equation}\label{saha:result}\|\phi\|_\infty
  \ll_{\pi_\infty,\eps} N_0^{1/6 + \eps}
  N_1^{1/3+\eps}\|\phi \|_2,\end{equation} for any $\epsilon>0$, where we write $N=N_0N_1$ with
$N_0$ the largest integer such that $N_0^2$ divides $N$. More recently, the sup-norm problem has also been considered for newforms on $\GL_2$ over number fields, we refer the reader to \cite{blomer-harcos-milicevic-maga, assing} for this.

In the compact situation where $D$ is a \emph{division} algebra, less work has been done. As in the split case, the \emph{trivial bound} in the level aspect is \begin{equation} \label{e:trivial}\|\phi\|_\infty \ll_{\pi_\infty, \eps} N^{1/2 + \eps}\|\phi \|_2,
\end{equation} for any automorphic form $\phi$ that transforms by a  character of the unit group of an order of level $N$ (see below for a more precise definition of this). The first improvement in this setting was due to Templier \cite{ templier-sup-errata,templier-sup}, who proved that for $\phi$ a global newform with respect to an Eichler order of level $N$, one has the bound \begin{equation}\label{e:templier}\|\phi\|_\infty \ll_{D, \pi_\infty, \eps} N^{11/24 + \eps}\|\phi \|_2.\end{equation} More recently, Marshall \cite{marsh15} proved the bound (again, only in the setting of newforms for Eichler orders of level $N$): \begin{equation}\label{e:local}\|\phi\|_\infty \ll_{D, \pi_\infty} \bigg(N_1\prod_{p|N} (1 + p^{-1})\bigg)^{1/2}\|\phi \|_2,\end{equation} with $N_1$ as in \eqref{saha:result}. (It was noted in \cite{marsh15} that the same bound also holds in the split case of $D^\times = \GL_2$, provided  one restricts the domain to a fixed compact set.) Note that Marshall's bound is better than Templier's when $N$ is squarefull, but worse when $N$ is squarefree. This reflects the fact that Marshall's bound is the local bound, which essentially coincides with the trivial bound for newforms of squarefree level, but is stronger than the trivial bound in general. We discuss this distinction in more detail in Section \ref{s:localbdintro}; see also Remark \ref{rem:local} for a more precise formulation.

\subsection{The main result} For the rest of this paper, let $D$ be a fixed indefinite quaternion division algebra over $\Q$.  We now describe (a simplified version of) the main result of this paper, which deals with the compact case and for the first time, improves upon the trivial bound \eqref{e:trivial} for completely arbitrary orders. Let $\O^\m$ denote a maximal order  of $D$.  
For any order  $\O \subseteq \O^\m$  of $D$ define the level $$N_\O = [\O^\m: \O].$$ 
Given an order $\O \subseteq \O^\m$,  define the (adelic) congruence subgroup $K_\O$ of $D^\times (\A_\f)$ (where $\A_\f$ denotes the finite adeles) by $$K_\O = \prod_p \O_p^\times$$ where the product is taken over all primes, and where we denote $\O_p = \O \otimes_{\Z} \Z_p$.

  Let $\pi$ be an irreducible automorphic representation of $D^\times(\A)$ with unitary central character. Since we are in the division algebra case, $\pi$ is unitary and cuspidal. By the multiplicity one theorem, we can uniquely identify $V_\pi$ with a certain space of automorphic forms on $D^\times(\A)$. For $\phi \in V_\pi$, we define as usual $$\| \phi \|_2 = \int_{ D^\times \bs D^\times(\A)/\A^\times} |\phi(g)|^2 dg.$$ Let $\O \subseteq \O^\m$ be an order and $\chi$ be a character of $K_\O$.  Note that we may write $\chi = \prod_p \chi_p$ where $p$ traverses the primes, and $\chi_p$ is a character of $\O_p^\times$ with $\chi_p$ trivial for almost all $p$. The compactness of $K_\O$ and the continuity\footnote{All our characters are assumed to be continuous.} of $\chi$ automatically implies that $\chi$ is of finite order. We define $V_\pi(K_\O, \chi) \subseteq V_\pi$ to be the subspace of functions $\phi \in V_\pi$ that have the transformation property \begin{equation}\label{e:transf}\phi(gk) = \chi(k) \phi(g) \text{ for all } k\in K_\O, \ g \in D^\times(\A).\end{equation} Given any non-zero $\phi \in V_\pi(K_\O, \chi) $, we wish to bound the sup-norm $\frac{\|\phi\|_\infty}{\|\phi \|_2}$ in terms of the level $N_\O$.

  As is clear from the earlier discussion, previous work on this topic has been focussed on the case where $\phi$ is a newform and $\O$ is an Eichler order with level $N_\O$ equal to the conductor of $\pi$. This restriction to newforms and Eichler orders is quite limiting as it does not capture the behavior of several natural families of automorphic forms. For example, there is an emerging theory of automorphic forms of minimal type  \cite{HNS,Hu:minimal,HN-minimal,hu-shu-yin}; such forms transform naturally with respect to characters of unit groups of certain \emph{non-Eichler} Bass orders. The aim of this paper is to prove for the first time a non-trivial upper bound for the sup-norm in the case of general orders.
  \medskip

\textbf{Theorem A.} (see Theorem~\ref{t:maingen}) \emph{Let $\O$ be an order of $D$ and denote $N=[\O^\m: \O]$. Let $N_1$ be as in \eqref{saha:result}. Let $\phi \in V_\pi(K_\O, \chi) $ where $\pi$ is an irreducible  automorphic representation of $D^\times(\A)$  with unitary central character, and $\chi$ is a  character of $K_\O.$ Suppose that $\phi_\infty$ corresponds to the vector of lowest non-negative weight\footnote{This assumption on $\phi_\infty$ is merely for convenience; our result can be stated without this assumption but then the implied constant will depend on $\phi_\infty$.} in $\pi_\infty$. Then we have $$\|\phi\|_\infty \ll_{D, \pi_\infty, \eps} N^{\eps} \min(\max(N^{1/3}, N_1^{1/2}), N^{11/24})\|\phi \|_2.$$}

For squarefree $N$, our Theorem implies that $\|\phi\|_\infty \ll_{D, \pi_\infty, \eps} N^{11/24 + \eps}\|\phi \|_2$. However, when $N$ is squarefull (i.e., every prime that divides $N$ does so with exponent at least 2) Theorem A gives a much stronger ``Weyl-type" exponent.

\medskip

\textbf{Corollary.} \emph{Let the notations and assumptions be as in Theorem A and  assume that $N$ is \emph{squarefull}. Then we have $$\|\phi\|_\infty \ll_{D, \pi_\infty, \eps} N^{\frac13+ \eps}\|\phi \|_2.$$}

For an explanation why we get such different exponents for squarefree and squarefull levels, see Section \ref{s:introkeyideas} of this introduction. An interesting fact is that this squarefree/squarefull dichotomy in the sup-norm bound is also present in the case of newforms on $\GL_2$ (see Section 1.3 of \cite{saha-sup-level-hybrid}), but for utterly different reasons!

\begin{remark}Our main result (Theorem \ref{t:maingen}) is significantly more general than Theorem A above, because it does not require that the space generated by $\phi$ under the action of $K_\O$ is one-dimensional.
\end{remark}
\subsection{A classical reformulation}

In this subsection, we reformulate Theorem A in the language of Hecke-Laplace eigenfunctions on the upper-half plane, which may be helpful for those who are more familiar with the classical language.

We let $d$ denote the reduced discriminant of $D$ and we fix an isomorphism $\iota_\infty: D \otimes_\Q \R \simeq M_2(\R)$. This leads to an embedding $D \hookrightarrow M_2(\R)$ which we also denote by $\iota_\infty$.

 For any order $\O$ of $D$,  associate a discrete subgroup $\Gamma_\O$ of $\SL_2(\R)$ as follows:
$$\Gamma_\O = \{ \gamma \in \iota_\infty(\O), \ \det(\gamma)=1\}.$$ Note that $\Gamma_\O\bs \H$ is a compact hyperbolic surface.

Now, let $\chi = \prod_p \chi_p$ be a unitary character of $K_\O$ as in the previous subsection.  We have the identity \begin{equation}\label{e:char}\iota_\infty^{-1}(\Gamma_\O) = D^\times(\Q) \cap D^\times(\R)^+ K_\O,\end{equation} where $D^\times(\R)^+ $ consists of the elements of $D^\times(\R)\simeq \GL_2(\R)$ with positive reduced norm (positive determinant). We extend $\chi$ to $D^\times(\R)^+ K_\O$ by making it trivial on $D^\times(\R)^+$. Hence \eqref{e:char} allows us to define a character on $\Gamma_\O$ which (by abusing notation) we also denote by $\chi$.
Let  $N_\chi$ be an integer such that \begin{equation}\label{d:congchar}\O^\m(N_\chi):= \Z + N_\chi \O^\m\subseteq \O,  \qquad \chi_p|_{1+N_\chi \O^\m_p} = 1 \text{ for all primes }p.\end{equation} (The existence of $N_\chi$ is clear.)  The character $\chi: \Gamma_\O \rightarrow S^1$ is trivial on the principal congruence subgroup $\Gamma_{\O^\m(N_\chi)}$, which is a normal subgroup of $\Gamma_\O$.  In particular, $\chi$ is a \emph{congruence character}, i.e., it is trivial on a principal congruence subgroup.

We let $C^\infty(\Gamma_\O \bs \H, \chi^{-1})$ denote the space of smooth functions $f: \H \rightarrow \C$ such that $$f(\gamma z) = \chi^{-1}(\gamma) f(z)$$ for all $\gamma \in \Gamma_\O$. For each $f \in C^\infty(\Gamma_\O \bs \H, \chi^{-1})$, we define $$\|f \|_2 = \left(\vol(\Gamma_\O\bs \H)^{-1}\int_{\Gamma_\O\bs \H}|f(z)|^2 dz\right)^{1/2}$$ where $dz = c \frac{dx dy}{y^2}$ is any $\SL_2(\R)$-invariant measure on $\H$. On  $C^\infty(\Gamma_\O \bs \H, \chi^{-1})$ there exist \emph{Hecke operators} $T_n$ for each positive integer $n$ defined as follows: $$(T_n f) (z) = \sum_{\gamma \in \Gamma_{\O^\m(N_\chi)}  \bs S_{\O^\m(N_\chi)}(n) } f(\gamma z),$$     where $N_\chi$ is chosen as above, and the subset $S_{\O^\m(N_\chi)}(n)$ of $\GL_2(\Q)$ is defined by $$S_{\O^\m(N_\chi)}(n) = \{ \gamma \in \iota_\infty(\O^\m(N_\chi)), \ \det(\gamma)=n\}.$$ It can be checked that the definition of $T_n$ given above is  \emph{independent} of all choices, including the choice of $N_\chi$, and is well-defined on the space $C^\infty(\Gamma_\O \bs \H, \chi^{-1})$. For all $(n, dN_\chi) =1$, these operators are normal and commuting.

Now, let $\phi$ and $\pi$ be as in Theorem A. Assume that $\phi$ is right-invariant by $\iota_\infty^{-1}(\SO(2))$. Define a function $f_\phi:\H \mapsto \C$ via the equation $$f_\phi(z) = \phi(g_\infty)$$ where $g_\infty \in D^\times(\R)^+$ is any matrix such that $\iota_\infty(g_\infty) i = z$. Then $f_\phi$
  has the following properties:
      \begin{enumerate}

     \item $f_\phi \in C^\infty(\Gamma_\O \bs \H, \chi^{-1})$.

     \item $f_\phi$ satisfies  $(\Delta + \lambda) f_\phi =
0$ where $\Delta := y^{-2}
(\partial_x^2 + \partial_y^2)$.

 \item $f_\phi$ is a simultaneous eigenfunction of the Hecke operators $T_n$ for all positive integers $n$ with $(n,dN_\chi)=1$.
      \end{enumerate}

      In other words, $f_\phi$ is a \emph{Maass form} for $\Gamma_\O$ with character $\chi^{-1}$ and Laplace eigenvalue $\lambda$ that is a Hecke eigenform at the good primes.
  \emph{Theorem A can be reformulated as an upper-bound on the sup-norms of such $f_\phi$: $$\|f_\phi\|_\infty \ll_{D, \lambda, \eps} N^{\eps} \min(\max(N^{1/3}, N_1^{1/2}), N^{11/24})\|f_\phi \|_2.$$}

This follows from the fact that $\sup_{g\in D^\times(\A)}|\phi(g)| = \sup_{z \in \H} |f_\phi(z)|.$

\begin{remark} In fact, every Maass form $f$ for $\Gamma_\O$ with character $\chi^{-1}$ and Laplace eigenvalue $\lambda$ that is a Hecke eigenform at the good primes, can be obtained in the above way, i.e., $f=f_\phi$ for some $\phi$ as in Theorem A. This can be proved using the technique of adelization. We omit the details of this in the interest of brevity.
\end{remark}

\subsection{The local bound and application to minimal vectors}\label{s:localbdintro}
\emph{For this subsection, we assume for simplicity that $\pi$ has trivial central character}. We compare Theorem A with the \emph{local bound} in the level aspect for automorphic forms $\phi$ inside automorphic representations $\pi$ of $D^\times(\A)$.  By the local bound for $\phi$, we mean the immediate bound emerging from the adelic pre-trace formula where the local test function at each ramified prime is chosen to be the restriction (to a maximal compact subgroup) of the \emph{matrix coefficient} of $\phi_p$. In fact, an explicit computation performed in \cite{marsh15,saha-sup-level-hybrid} for the case of newforms together with the principle of formal degrees, allows us to write down this bound in terms of the conductor of $\pi$.

More precisely,  let $\pi$ be as in Theorem A such that $\pi$ has trivial central character and $\pi_p$ is one-dimensional at each prime dividing $d$. Then, for all $\phi \in V_\pi$ that satisfy a mild condition\footnote{The condition is that for some $g \in D^\times(\A_\f)$ we have $\int_{gK_{\O^\m}g^{-1}} |\langle \pi(h)\phi, \phi \rangle|^2 dh \gg_\eps C_1^{-1-\eps} \langle \phi, \phi \rangle^2$. This is a mild technical condition that is satisfied by several families of automorphic forms, including newforms, automorphic forms corresponding to minimal vectors, $p$-adic microlocal lifts, and so on. For a more down-to-earth but slightly stronger condition, see Remark \ref{rem:local}.}, we have  the following bound: \begin{equation}\label{e:localgen}\| \phi\|_\infty \ll_{\eps}  C_1^{\frac12 + \eps}\|\phi\|_2\end{equation} where $C_1$ is the smallest integer such that $\mathrm{cond}(\pi)|C_1^2$. We call \eqref{e:localgen} the local bound (in the level aspect). A  more refined local bound is given in Remark \ref{rem:local} of this paper, under slightly more restrictive assumptions on $\phi$.

\begin{remark}The quantity $C_1$ is equal to $\cond(\pi \times \tilde{\pi})^{1/2}$. One reason that $C_1=\cond(\pi \times \tilde{\pi})^{1/2}$ shows up in \eqref{e:localgen} is that (when $\pi$ is discrete series) $C_1$ approximately equals the formal degree of $\pi$;  see the calculations in \cite{marsh15,saha-sup-level-hybrid} or \cite[Appendix A]{HN-minimal}.
\end{remark}

The local bound \eqref{e:localgen} is essentially due to Marshall \cite{marsh15}. It seems reasonable to call \eqref{e:localgen} the local bound because  (to quote Marshall in \cite{marsh15}) it appears to be ``the best bound that may be proved by only considering the behaviour of $\phi$ in one small open set at a time,  without taking the global structure of the space into account".
We note that the bound \eqref{e:localgen} is also true in the non-compact setting of automorphic forms on $\GL_2(\A)$, provided one restricts the domain of $\phi$ to a fixed compact set. It seems worthwhile here to comment on the relationship between the local bound \eqref{e:localgen} and the trivial bound \eqref{e:trivial}. It can be shown easily that the local bound \eqref{e:localgen} is always \emph{at least as strong} as the trivial bound \eqref{e:trivial}. However, these two bounds have somewhat different flavours: the trivial bound applies to \emph{all} forms that transform by unitary characters of compact subgroups of a particular volume (and does not depend on the conductors of the associated representations) while the local bound applies to forms whose associated representations have a particular \emph{conductor} (and does not depend on some choice of  subgroup that transforms the form by a character).

A central problem in this field  (which also generalizes to higher rank automorphic forms) is to improve upon the local bound \eqref{e:localgen} for natural families of automorphic forms $\phi$. An obvious strategy to try to do this would be to first improve upon the trivial bound for some class of subgroups (as we do in Theorem A in wide generality), and then use this result (for some carefully chosen subgroup) to improve upon the local bound. This naive strategy works best when the local component $\phi_p$ for each ramified prime $p$ is an eigenvector of a relatively large neighbourhood of the identity.  A key class of $\phi_p$ for which this is true is the family of \emph{minimal vectors}. Minimal vectors may be viewed as
$p$-adic analogues of holomorphic vectors at infinity and have several remarkable properties, which were proved in our recent work \cite{HNS} (where the analytic properties of the corresponding global automorphic forms of minimal type were studied for the first time in the setting of $\GL_2$).  The main result of \cite{HNS} proved an optimal sup-norm bound for such forms in the setting of $\GL_2$.

However, the techniques used in \cite{HNS} relied on a very special property of the Whittaker/Fourier expansion of $\phi$ which only works in the non-compact setting. Therefore, the proof cannot carry over to the compact case, i.e., to our case of  indefinite quaternion division algebras $D$, as no Whittaker/Fourier expansions exist here. A major impetus behind this paper was to improve upon \eqref{e:localgen} for automorphic forms of minimal type on \emph{compact arithmetic surfaces}. One consequence of Theorem A is that we can now do this.

\medskip

\textbf{Theorem B.} (see Theorem \ref{t:main})\emph{ Let $\pi$ be an irreducible, automorphic
  representation of $D^\times(\A)$ with trivial central character whose local component at each prime dividing $d$ is one-dimensional, and let
  $C$ denote the (arithmetic) conductor of $\pi$. Assume that $C$ is the fourth power of
  an odd integer and suppose, for each prime $p$ dividing
  $C$, that $\pi_p$ is a supercuspidal representation. Let
  $\phi$  in the space of
  $\pi$ correspond to a minimal vector at each prime dividing $C$, a spherical vector at all other primes, and a vector of minimal weight at infinity. Then
\begin{equation}\label{e:localminimal}\|\phi \|_\infty \ll_{D,\pi_\infty, \eps} C^{\frac16+ \eps}\|\phi\|_2.\end{equation}}

We remark that the condition on the conductor being the fourth power of an odd integer  is a  convenient one that was assumed for the definition of minimal vectors in our work \cite{HNS}. However, this restriction  has been partially removed in more recent work of Hu and Nelson \cite{HN-minimal} where they define and study properties of minimal vectors for all supercuspidal representations of $D^\times$ where $D$ is a (split or division) quaternion algebra over a $p$-adic field with $p\ne 2$. Using their work, we prove a more general version of Theorem B (Theorem \ref{t:main}) that applies to \emph{any} odd conductor $C$.

The quantity $C^{1/6}$ on the right side of \eqref{e:localminimal}  represents  one-third of the progress from the local bound $C^{1/4}$ extracted from the right side of \eqref{e:localgen} (we observe that in the setting of Theorem B, $C_1 = C^{1/2}$) to the conjectured\footnote{To be fair, not a lot of evidence exists for this conjecture beyond the fact that it the best possible bound one can hope to prove, and no theoretical obstructions to achieving it have been found.} true bound of $C^\eps$.  Theorem B therefore gives a Weyl-type exponent, which appears to be a  natural limit for the problem of improving upon the local bound with current tools, at least in cases where no Fourier expansions are available.

Theorem A also leads to a sub-local bound in certain other settings. These other settings include the case of ``$p$-adic microlocal lifts" in the sense of \cite{nelson-padic-que}. The $p$-adic microlocal lifts may be naturally viewed as the principal series analogue of minimal vectors. Indeed, for the corresponding global automorphic forms, we are also able to prove a Weyl strength sub-local bound, see \eqref{e:microlocal}. We also obtain a bound for \emph{newforms} that generalizes and strengthens  previously known results; see Theorem \ref{t:newforms}. \footnote{A much stronger bound in the setting of newforms of trivial character in the \emph{depth aspect} will appear in  a sequel to this paper.}

Finally, we remark that the results of this paper appear to be the first time that the local bound in the conductor aspect has been improved upon for squarefull conductors, for any kind of automorphic form on a compact domain. (In the non-compact case, this had been achieved in our previous paper \cite{HNS}.) It seems also worth mentioning here the very recent work of Hu \cite{Hu:minimal} which generalizes \cite{HNS} and proves a sub-local bound in the depth aspect for automorphic forms of minimal type on $\GL_n$ under the assumption that the corresponding local representations have ``generic" induction datum.

\subsection{Key ideas}\label{s:introkeyideas} The heart of this paper is our solution to a counting problem for the number of elements that are ``close" to the identity inside a (varying) quaternion order. This counting problem  arises naturally in the amplification method for the level-aspect sup-norm problem. Roughly speaking, given an order $\O \subseteq \O^\m$ of $D$, we are interested in good upper bounds for the integer $$N_\O(m;z)  =\left|\{\alpha \in S_\O(m): u(z , \alpha z) \ll 1\}\right|$$ where $$S_\O(m) = \{ \gamma \in \iota_\infty(\O), \ \det(\gamma)=m\},$$ and $u(z_1, z_2)$ denotes the hyperbolic distance.

 Above, $z$ is any point on the upper-half plane $\H$. Note, however, that for any $\gamma \in \Gamma_{\O^\m}$, we have $N_\O(m;z)  = N_{\O '}(m;\gamma z)$, for an order $\O'$ that is \emph{conjugate} to $\O$ by an element of $(\O^\m)^\times$. This allows us to move $z$ to a \emph{fixed compact set} $\J$, namely $\J$ equal to some choice of fundamental domain for the action of $\Gamma_{\O^\m}$ on $\H$, at the cost of changing the order $\O$ to a suitable $(\O^\m)^\times$-conjugate of it. Now suppose that for each $m$ and $z \in \J$, we are able to prove a bound for $N_\O(m;z)$  that depends only on $m$, $\J$, and the $(\O^\m)^\times$-conjugacy class of $\O$.  Then we  have actually proved a bound that is valid for all $z\in \H$. This reduction is a key piece in our strategy and can be viewed as a workaround for the situation when $\Gamma_\O$ is not a normal subgroup of $\Gamma_{\O^\m}$ (c.f. the comments in Section 1.3 of \cite{templier-sup}).

In fact we prove two separate bounds for $N_\O(m;z)$ for $z \in \J$. The primary one among them (Proposition \ref{t:counting})  is valid for general \emph{lattices} $\O$ (and does not use the multiplicative structure of the order at all!). The analysis behind the proof of this proposition, carried out in Sections \ref{s:raphael} - \ref{s:proofcount},   may be of independent interest. Roughly speaking, we use a workhorse lemma on small linear combinations of integers to reduce the counting problem to several elementary ones involving simple linear congruences. The reader may wonder at this point why we do not instead use standard lattice counting results such as  Proposition 2.1 of \cite{bhw93}. The reason is that those counting results are typically in terms of the successive minima of the lattice, which is not a preserved quantity for $(\O^\m)^\times$-conjugates of the lattice. In contrast, our method allows us to obtain a strong upper bound (Proposition \ref{t:counting}) in terms of the \emph{invariant factors} of the lattice in $\O^\m$ (the invariant factors are the same for all $(\O^\m)^\times$-conjugate lattices).

However, the bound obtained in Proposition \ref{t:counting} is sufficient for our purposes only when the lattice is \emph{balanced} in the sense that its largest invariant factor is not very large (relative to the level).
 This raises the question: how do we deal with unbalanced lattices? For this, we observe another useful fact: the sup-norm of an automorphic form $\phi$ does not change when the form is replaced by some $D^\times(\A)$-translate of it. Now, given $\phi$ as in Theorem A, a $D^\times(\A)$-translate of $\phi$ transforms by a character of an order $\O'$ that is \emph{locally isomorphic} to (in the same genus as) the order $\O$ that we started off with.  This leads us to investigate which orders have the key property of having a locally isomorphic order whose largest invariant factor is not very large. We solve this problem by a careful case-by-case analysis (see Section \ref{s:orderbalanced}) relying on the explicit classification of local \emph{Gorenstein orders} due to Brzezinski. The answer (essentially) is that any order of level $N$ is locally isomorphic to an order whose largest invariant factor divides $N_1$. This result may be of independent interest.

The upshot of all this is that the \emph{only }orders for which our general lattice counting result (Proposition \ref{t:counting}) does not lead to a non-trivial  sup-norm bound are those whose levels are close to squarefree. To deal with this remaining case, we follow Templier's method \cite{templier-sup, templier-sup-errata}  and prove a second counting result (Proposition \ref{t:countingnew}), which uses the ring structure of the order to prove that
elements that are close to the centralizer of some point must lie in a quadratic subfield. The combination of these two counting results lead directly to Theorem A above, and explain the shape the bound therein takes. Indeed, the term $\max(N^{1/3}, N_1^{1/2})$ in Theorem A comes from our first counting result, while the term $N^{11/24}$ comes from our second counting result.

Once the counting results are in place, we feed it into the amplification machinery to prove our main Theorems, closely following the adelic language employed in our previous paper \cite{saha-sup-level-hybrid}. It might be worth mentioning here that we use the slightly improved amplifier introduced by Blomer--Harcos--Mili\'cevi\'c in \cite{blomer-harcos-milicevic} rather than the amplifier used in works like \cite{harcos-templier-2,saha-sup-level-hybrid}, which saves us some technical difficulties.
\subsection{Additional remarks}\label{s:introadd}
 For simplicity, we have only proved a level aspect bound in Theorem A. It should be possible to extend the argument to prove a non-trivial hybrid bound, however we do not attempt to do so here. It may also be possible to extend some parts of this paper (with additional work) to the case of number fields, and to certain higher rank groups. This is because our counting argument for general lattices is elementary and highly flexible, and should generalise to anisotropic lattices of higher rank.

 We end this introduction with a final remark. As explained in Section \ref{s:localbdintro} of this paper, our main result leads to an improvement of the local bound \eqref{e:localgen} in the level aspect for various families of automorphic forms, particularly the ones of minimal type studied in \cite{HNS, HN-minimal}. This uses crucially the fact that minimal vectors in supercuspidal representations $\pi_p$ are eigenvectors for the action of a relatively large subgroup (having volume around $\cond(\pi_p)^{-1/2}$).   In contrast, newvectors in $\pi$ behave well only  under the action of a much smaller subgroup  (having volume around $\cond(\pi_p)^{-1}$). Therefore, the approach of this paper does not immediately lead to an improvement over the local bound for newforms in the depth aspect where the conductor varies over powers of a fixed prime. However, all hope is not lost -- it turns out that one can augment this approach with suitable results on \emph{decay of matrix coefficients}. This is the topic of ongoing work of the author with Yueke Hu, which will be published in a sequel to this paper. Our method there will allow us to beat the local bound for newforms in the depth aspect for the first time.

\subsection*{Acknowledgements}I would like to thank Yueke Hu and Paul Nelson for useful discussions, and Raphael Steiner for his generous help with the proof of Lemma \ref{l:key}. I would also like to thank the anonymous referee for his suggestion to rephrase the argument of Section \ref{s:proofcount} in terms of matrix manipulations and for other suggestions which have improved the readability of this paper.

\section{A counting problem for lattices} \label{s:counting}
\subsection{A lemma on linear combinations of integers} \label{s:raphael}
The object of this subsection  is to prove an elementary but very useful lemma on the existence of ``small" linear combinations of integers coprime to another integer. It is possible that some version of this lemma has appeared elsewhere, but we were unable to find a suitable reference. The proof below is essentially due to Raphael Steiner (private correspondence, March 2018) and we are grateful to him for his help and for allowing us to include his proof here.

\begin{lemma}\label{l:key}For any $\epsilon > 0$, and any $c\ge 1$, there is a positive constant $C_{c,\epsilon}$
such that for all $(n+2)$-tuples of integers $(a_0, a_1, \ldots, a_n, N)$, with $N>0$,  $\gcd(a_0, a_1, \ldots, a_{n}, N)=1$, there
exists at least $c^n$ distinct $n$-tuples of integers $(p_1, p_2, \ldots, p_n)$ with \begin{enumerate}
\item \label{1} $0 \le p_i \le C_{c,\eps} N^\epsilon$ for $1 \le i \le n$,
\item $\gcd(a_0 + a_1p_1 + a_2p_2 + \ldots +a_np_n, N)=1$.
\end{enumerate} More precisely, for any non-empty subset $I$ of $\{1, \ldots, n\}$, let $P_I$ denote the natural projection map from $\Z^n$ to $\Z^I$ taking any $n$-tuple to its associated sub-tuple corresponding to the indices in $I$ . Then, if $S \subset \Z^n$ consists of all the $n$-tuples $(p_1, p_2, \ldots, p_n)$  satisfying the conditions (1), (2) above, then $|P_I(S)| \ge c^{|I|}$ for each subset $I$ of $\{1, \ldots, n\}$.
\end{lemma}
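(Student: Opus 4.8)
The plan is to induct on $n$, the number of "slots" $a_1,\dots,a_n$ available. The key structural observation is that the condition $\gcd(a_0, a_1, \ldots, a_n, N) = 1$ is a statement about each prime $p \mid N$ separately: for each such $p$, at least one of $a_0, \ldots, a_n$ is not divisible by $p$. So I would first pass to the squarefree radical of $N$ (call it $N' = \prod_{p \mid N} p$), since $\gcd(\cdot, N) = 1 \iff \gcd(\cdot, N') = 1$, and work prime-by-prime. For a fixed choice $(p_1, \dots, p_n)$, the integer $a_0 + a_1 p_1 + \cdots + a_n p_n$ is coprime to $N$ iff for every prime $p \mid N$ it is a unit mod $p$; the fraction of "bad" choices at $p$ is at most $1/p$ in the relevant coordinate, and one wants to run a sieve / inclusion–exclusion argument across the primes dividing $N$, exploiting that $\prod_{p \mid N}(1 - 1/p) \gg 1/\log\log N \gg N^{-\epsilon}$, so a positive density of tuples in a box survives. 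The subtlety is that we need not just one surviving tuple but $c^n$ of them, and moreover we need the stronger projection statement $|P_I(S)| \ge c^{|I|}$ for every $I$.

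The cleanest route to the projection statement is the inductive one. For the inductive step, I would isolate one coordinate, say $p_n$. Consider the sub-tuple $(a_0, a_1, \ldots, a_{n-1}, a_n, N)$ and think of choosing $p_n$ first. Split the primes $p \mid N$ into two classes: those for which $p \mid a_n$ (the "inert" primes, where $p_n$ is useless and we must rely on $a_0, \ldots, a_{n-1}$) and those for which $p \nmid a_n$ (where varying $p_n$ over a residue system mod $p$ hits every class). For each of the roughly $C_{c,\epsilon} N^\epsilon / (\text{product of small inert primes})$ choices of $p_n$ in a suitable arithmetic progression, the new "constant term" $a_0 + a_n p_n$ together with $a_1, \ldots, a_{n-1}$ still has gcd $1$ with $N'' := \prod_{p \mid N,\ p \mid a_n} p$ — this is where one must check that the progression can be chosen so that $a_0 + a_n p_n$ is coprime to the product of primes $p \mid N$ with $p \nmid a_n$ — and then apply the inductive hypothesis with $n-1$ slots and modulus (a divisor of) $N''$ to get $c^{n-1}$ completions, all lying in a box of size $C_{c,\epsilon}(N'')^\epsilon \le C_{c,\epsilon} N^\epsilon$. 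Multiplying the $\gg c$ valid choices of $p_n$ by the $c^{n-1}$ completions gives $c^n$ tuples; and for the projection bound, if $n \in I$ one uses the $\gg c$ choices of $p_n$ times $|P_{I \setminus \{n\}}(\cdot)| \ge c^{|I|-1}$ from induction, while if $n \notin I$ one just uses the inductive projection bound directly (keeping $p_n$ fixed).

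The main obstacle I anticipate is bookkeeping the constants correctly across the induction: the box size $C_{c,\epsilon} N^\epsilon$ must not degrade as $n$ grows (the constant $C_{c,\epsilon}$ should depend only on $c$ and $\epsilon$, not on $n$), and the "$\gg c$ valid choices of $p_n$" must really be a clean $\ge c$ (or be absorbable). The way to control this is to be generous with $\epsilon$: at each stage one only needs a box of size $N^{\epsilon}$ and the sieve loss is $N^{o(1)}$, so choosing the box size to be $C N^{\epsilon}$ with $C = C(c,\epsilon)$ large enough — large enough that even after throwing away tuples in which $a_0 + a_1 p_1 + \cdots$ shares a factor with $N$, and after restricting $p_n$ to a progression modulo a product of primes that is itself $O_\epsilon(N^\epsilon)$ (here one uses that a product of distinct primes each $\le N^{\epsilon/n}$... actually more carefully, that only $O_\epsilon(1)$ primes exceed any fixed small power, handled by noting $\prod_{p \le T} p = e^{(1+o(1))T}$) — there remain at least $c$ admissible values in the last coordinate. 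I would handle the "product of small primes" issue by the standard trick: primes $p \le (\log N)^2$ contribute a product that is $N^{o(1)}$, and primes $p > (\log N)^2$ dividing $N$ number at most $O(\log N/\log\log N)$, each removing at most a $1/p < (\log N)^{-2}$ fraction, so their total multiplicative effect is $1 + o(1)$. Packaging all of this, one gets the stated $|P_I(S)| \ge c^{|I|}$ uniformly.
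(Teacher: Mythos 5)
Your overall strategy (induction on $n$ together with an elementary one-variable sieve of the shape $\sum_{d\mid Q}\mu(d)\left(X/d+O(1)\right)=\tfrac{\phi(Q)}{Q}X+O_\eps(Q^\eps)$) is the same as the paper's, but your inductive step has a genuine gap coming from the order of quantifiers. You choose $p_n$ first, arrange that $a_0+a_np_n$ is coprime to the primes $p\mid N$ with $p\nmid a_n$, and then invoke the inductive hypothesis with modulus (a divisor of) $N''=\prod_{p\mid N,\ p\mid a_n}p$ to pick $(p_1,\dots,p_{n-1})$. That inductive application only guarantees that $a_0+a_np_n+a_1p_1+\cdots+a_{n-1}p_{n-1}$ is coprime to $N''$. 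At a prime $p\mid N$ with $p\nmid a_n$, the residue of the full sum modulo $p$ also involves $a_1p_1+\cdots+a_{n-1}p_{n-1}$, and these later choices are completely uncontrolled modulo such $p$ (unless $p$ happens to divide all of $a_1,\dots,a_{n-1}$), so they can move the sum into the zero class and destroy the coprimality you pre-arranged through $p_n$. Hence condition (2) of the lemma is not established for the tuples you construct, and the projection counts inherit the same problem.

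The repair is small but essential. After choosing $p_n$ so that $a_0+a_np_n$ is coprime to the primes $p\mid N$, $p\nmid a_n$ (your base-case sieve gives at least $c$ such $p_n$ in a box of size $C_{c,\eps}N^\eps$ once $C_{c,\eps}$ is large), note that $\gcd(a_0+a_np_n,\,a_1,\dots,a_{n-1},\,N)=1$: at primes $p\mid a_n$ this follows from the original hypothesis because $a_0+a_np_n\equiv a_0 \pmod p$. You may then apply the inductive hypothesis with the \emph{full} modulus $N$, not $N''$, so that coprimality to $N$ of the completed sum is part of what induction delivers; the counting and projection bookkeeping then go through as you describe. The paper's proof takes the mirror-image route and never meets this issue: after reducing to $\gcd(a_0,\dots,a_n)=1$ and $N$ squarefree, and reducing $a_{k+1}$ modulo $N$, it applies the inductive hypothesis to $(a_0,\dots,a_k)$ with modulus $a_{k+1}$ (boxes of size $C_{c,\eps}a_{k+1}^\eps\le C_{c,\eps}N^\eps$), making the partial sum coprime to $a_{k+1}$, and only then sieves the last coordinate modulo $N$ via the $n=1$ case applied to the pair consisting of the partial sum and $a_{k+1}$. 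Also, your auxiliary apparatus ($\prod_{p\mid N}(1-1/p)\gg(\log\log N)^{-1}$, the small/large prime splitting) is unnecessary: the crude $O_\eps(Q^\eps)$ error with $X=C_{c,\eps}Q^\eps$ already yields the clean ``at least $c$'' count, with $C_{c,\eps}$ independent of $n$.
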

\begin{remark}With more sophisticated sieving methods a la \cite{iwan78}, one can replace $N^{\epsilon}$ by $\log(N)^2$ in the lemma above.
\end{remark}
\begin{remark}  We encourage the reader to initially focus on the case $n=1$  of the lemma above to get a feel for the statement. In this paper, we will need the lemma only in the case $c=2$, $n=2$.
\end{remark}
\begin{proof} We may assume without loss of generality that $\gcd(a_0, a_1, \ldots, a_{n})=1$ and that $N$ is squarefree. Indeed, if these conditions are not met, we can replace each $a_i$ by $a_i/d$ where $d = \gcd(a_0, a_1, \ldots, a_{n})$ and we can replace $N$ by its largest squarefree divisor, so that this modified setup does satisfy the conditions. Any constant $C_{c,\eps}$ that works for this modified setup will also work for the original setup.

 We will prove the lemma by induction on $n$. Let us prove the base case $n=1$. The starting point for this case is the following  fact:\emph{ For all $\epsilon>0$, there is a constant $D_\eps$ such that for all positive integers} $a_0, a_1, Q$ with $\gcd(a_1, Q)=1$ and $X>0$ we have
\begin{equation}\label{e:case2} \sum_{\substack{1 \le m \le X \\ \gcd(a_0+ma_1,Q)=1}} 1 \ \ge \  \frac{\phi(Q)}{Q}X \  -  \ D_\eps Q^{\eps/2}.\end{equation}

The proof of \eqref{e:case2} follows from the following calculation: \begin{align*}&\sum_{\substack{1 \le m \le X \\ \gcd(a_0+ma_1,Q)=1}} 1 = \sum_{1 \le m \le X} \  \sum_{\substack{d | Q \\ d|a_0+ma_1}} \mu(d) = \sum_{d | Q} \mu(d) \sum_{\substack{1 \le m \le X \\ m \equiv -\overline{a_1} a_0 \mod(d)}} 1 \\ &= \sum_{d | Q} \mu(d)\left(\frac{X}{d} + O(1)\right) =
 X \sum_{d|Q} \frac{\mu(d)}{d} + \sum_{d|Q} O(1) = \frac{\phi(Q)}{Q} X + O_{\epsilon}(Q^{\epsilon}).\end{align*}

 Let us now explain how the case $n=1$ of the lemma follows from \eqref{e:case2}. We may assume that $D_\eps >1$. We can find a constant $E_\eps$ such that $$\frac{Q}{\phi(Q)} \le E_\eps Q^{\eps/2}$$ for all $Q$ and all $\eps>0$. Now put $Q = N/\gcd(a_1, N)$ and choose $C_{c,\eps} > (c+1) D_\eps E_\eps$. Then picking $X = C_{c,\eps} Q^\eps$ in \eqref{e:case2} we have that

\begin{equation}\label{e:case3} \sum_{\substack{1 \le m \le  C_{c,\eps} Q^\eps \\ \gcd(a_0+ma_1,Q)=1}} 1 \ \ge \  \frac{\phi(Q)}{Q} C_{c,\eps} Q^\eps\  -  \ D_\eps Q^{\eps/2} \ge c D_\eps Q^{\eps/2} \ge c.\end{equation}

So there exist at least $c$ distinct integers $m_1, m_2, \ldots, m_c$, such that for $p_1 \in \{m_1, m_2, \ldots, m_c\}$, we have $$p_1 \le C_{c,\eps} N^\eps, \quad \gcd(a_0+p_1a_1,N/\gcd(a_1, N))=1.$$ However, since $\gcd(a_0, a_1)=1$, we have that $$\gcd(a_0+p_1a_1, \ N/\gcd(a_1, N))=1 \Rightarrow \gcd(a_0 +p_1a_1, N)=1.$$ The proof of the case $n=1$ of the Lemma is complete.

We now prove the induction step. Assume that the lemma is proved for $n=k$. We will proe the lemma for $n=k+1$. Suppose we have  integers $a_0, a_1, \ldots, a_{k+1}, N$, with  $\gcd(a_0, a_1, \ldots, a_{k+1})=1$.  By replacing $a_{k+1}$ by its residue modulo $N$ if necessary, we may assume that $0 \le a_{k+1} \le N$.

Since $a_{k+1} \le N$, by the case $n=k$ of the Lemma,  we can find a set $S_k\subset \Z^k$ with the following properties:

 \begin{itemize}
 \item For each $(p_1, \ldots, p_k) \in S_k$, we have $0 \le p_i \le C_{c,\eps} N^\epsilon$ and
  $\gcd(a_0 + a_1p_1 + \ldots +a_kp_k, a_{k+1})=1$.
 \item For each non-empty subset $I$ of $\{1, \ldots, k\}$, $|P_I(S)| \ge c^{|I|}$.
  \end{itemize}

   We now construct a set $S_{k+1} \subset \Z^{k+1}$. Namely, given any $\tilde{s} = (p_1, \ldots, p_k) \in S_k$, we use the case $n=1$ of the lemma to find $c$ distinct  integers $p_{\tilde{s},i}$ for $1\le i \le c$, such that $0 \le p_{\tilde{s},i} \le C_{c,\eps} N^\epsilon$ and such that $$\gcd((a_0 + a_1p_1 + \ldots +a_kp_k) + a_{k+1}p_{\tilde{s},i}, N)=1.$$ Define $$S_{k+1} = \{(\tilde{s},p_{\tilde{s},i}):  \tilde{s}  \in S_k, \ 1\le i \le c\}.$$ It is clear that $S_{k+1}$ satisfies the required conditions and thus the induction step is complete.
\end{proof}

\subsection{Lattices in quaternion orders}\label{s:quatfirst}   Let $D$ be an indefinite quaternion division algebra over $\Q$. We let $d$ denote the reduced discriminant of $D$, i.e., the product of all primes such that $D_p$ is a division algebra.  Fix once and for all a maximal order $\O^\m$ of $D$, and an isomorphism\footnote{Such an isomorphism $\iota_\infty$ is unique up to conjugation by $\GL_2(\R)$.} $\iota_\infty: D_\infty \rightarrow M(2, \R)$.

For $\alpha \in D$, let $\alpha \mapsto \overline{\alpha}$ be the standard involution of $D$ and let the reduced norm $\nr$ and trace $\tr$ be given by $$\nr(\alpha) = \alpha \overline{\alpha}, \quad \tr(\alpha) = \alpha + \overline{\alpha}.$$ Given a subset $\L$ of $\O^\m$, and an integer $m$, we define $$\L(m) =\{\alpha \in \L: \nr(\alpha) = m\},$$ $$\L_m =\{\alpha \in \L: \tr(\alpha) = m\}.$$ Thus, $\O^\m_0$ denotes the trace 0 elements of $\O^\m$, and $\O^\m(1)$ is the subgroup of $(\O^\m)^\times$ with reduced norm 1. We fix, once and for all, three elements $i_1$, $i_2$, $i_3$ in $\O^\m_0$ such that $$\O^\m_0 = \Z i_1 \oplus \Z i_2 \oplus \Z i_3.$$ So we have $$D = \Q + \Q i_1 + \Q i_2 + \Q i_3.$$

Fix a compact subset $\J$ of $\H$.\footnote{Later in this paper, we will take  $\J$ to be a fundamental domain for the action of $\iota_\infty(\O^\m(1))$ on $\H$.}
Given a subset $\L$ of $D$, and an element $z \in \H$, $\delta>0$, define for each positive integer $m$ the set
$$\L(m;z, \delta)  =\{\alpha \in \L(m): u(z, \iota_\infty(\alpha) z) \le \delta\}.$$

The reader should think of $\delta \asymp 1$ as fixed, since all constants will be allowed to depend on $\delta$ (in fact, for  our eventual applications, we will actually fix $\delta=1$, however it will be useful to keep this variable $\delta$ around for now). Our goal is to bound the cardinality of $\L(m;z, \delta)$ (in terms of some basic invariants of $\L$) whenever $\L \subseteq \O^\m$ is a \emph{lattice} containing 1.

Let $\L\subseteq \O^\m$ be a lattice containing 1. We denote $$N = [\O^\m : \L]$$ and call $N$ the level of $\L$. By the structure theorem for finitely generated abelian groups, the finite group $\O^\m_0/L_0$ is isomorphic to $(\Z/M_1\Z) \times (\Z/M_2\Z) \times (\Z/M_3\Z)$ for some uniquely defined positive integers $M_1|M_2|M_3$, which are sometimes known as invariant factors. We will refer to these integers as the shape of $\L$.

\begin{definition}Given positive integers $M_1, M_2, M_3$ such that $M_1|M_2|M_3$, a lattice $\L$ of $D$ is said to have shape $(M_1,M_2, M_3)$ if $1 \in \L$, $\L \subseteq \O^\m$ and there exist elements $\Delta_1, \Delta_2, \Delta_3 \in \O^\m_0$ such that:

\begin{enumerate}
\item $\O^\m_0 =  \Z\Delta_1 \oplus \Z\Delta_2 \oplus\Z\Delta_3$,
\item $\L_0 =  M_1\Z\Delta_1 \oplus M_2\Z\Delta_2\oplus M_3\Z\Delta_3$.
\end{enumerate}

Furthermore, we denote $M=M_1M_2M_3$ and call it the level of $\L_0$.
\end{definition}

\begin{remark}Let  $\L \subseteq \O^\m$ be a lattice of shape $(M_1,M_2, M_3)$ and level $N$. If $x\in D$ satisfies $x \L x^{-1} \subseteq \O^\m$, then one may ask about the shape and level of $\L':= x \L x^{-1}$.

It is easy to see that $\L'$ always has level $N$ but its shape might be different in general. However, if $x \in (\O^\m)^\times$, then $\L'$ also has shape $(M_1,M_2, M_3)$.
\end{remark}

It turns out to be more convenient for us to descend to the sublattice $\Z \oplus \L_0$, for which the next lemma is essential.

\begin{lemma}\label{l:asymp}Let $\L$ be a lattice in $D$ such that $\L \subseteq \O^\m$ and $1 \in \L$. Then $$[\L : \Z \oplus \L_0] \in \{1,2\}.$$
\end{lemma}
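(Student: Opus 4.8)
The plan is to compare the lattice $\L$ with its sublattice $\Z \oplus \L_0$ by understanding what extra elements $\L$ can contain. First I would note that $\Z \oplus \L_0 \subseteq \L$ since $1 \in \L$ and $\L_0 = \L \cap \O^\m_0 \subseteq \L$; the content is that the index is at most $2$. The key observation is that for any $\alpha \in \L$, writing $\alpha = \frac{\tr(\alpha)}{2} + \alpha_0$ where $\alpha_0 = \alpha - \frac{\tr(\alpha)}{2} \in D_0$ is the trace-zero part, we have $2\alpha = \tr(\alpha) + 2\alpha_0$. Since $\tr(\alpha) \in \Z$ (as $\alpha \in \O^\m$), the element $2\alpha_0 = 2\alpha - \tr(\alpha)$ lies in $\L$ and is trace-zero, hence lies in $\L_0$. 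Therefore $2\alpha = \tr(\alpha) \cdot 1 + 2\alpha_0 \in \Z \oplus \L_0$ for every $\alpha \in \L$, which shows that $2\L \subseteq \Z \oplus \L_0$, and consequently the quotient $\L/(\Z \oplus \L_0)$ is killed by $2$.

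Next I would pin down that this quotient is cyclic (so of order $1$ or $2$, not, say, $(\Z/2)^2$). The natural map $\L \to \Z$ given by $\alpha \mapsto \tr(\alpha)$ has image some subgroup $n\Z$ with $n \in \{1, 2\}$ — indeed the image contains $\tr(1) = 2$, and it is contained in $\Z$, so the image is either $\Z$ or $2\Z$. The kernel of this map is exactly $\L \cap D_0 = \L_0$. Thus $\L/\L_0 \cong n\Z$ is infinite cyclic, generated by the class of any $\alpha^* \in \L$ with $\tr(\alpha^*) = n$. Now $\Z \oplus \L_0$ maps onto $2\Z$ under the trace (again because $\tr(1) = 2$ and $\L_0$ is trace-zero), so $[\L : \Z\oplus\L_0] = [\, \mathrm{tr}(\L) : \mathrm{tr}(\Z\oplus\L_0)\,] = [n\Z : 2\Z]$, which is $1$ if $n = 2$ and $2$ if $n = 1$. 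This gives the claim directly: the index equals $2/n \in \{1, 2\}$.

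There is no serious obstacle here; the only point requiring a moment's care is the interplay between the additive decomposition $D = \Q \oplus D_0$ (which involves denominators of $2$) and integrality in $\O^\m$. The cleanest way to organize the argument, and the way I would write it, is to work with the surjection $\tr : \L \twoheadrightarrow n\Z$ and observe that its restriction to $\Z \oplus \L_0$ has image $2\Z$, while its kernel on both groups is $\L_0$; then the snake-lemma-style identity $[\L : \Z \oplus \L_0] = [n\Z : 2\Z] \in \{1, 2\}$ finishes it. If one prefers to avoid the quotient-group bookkeeping, one can instead argue pointwise: given $\alpha \in \L$, either $\tr(\alpha)$ is even, in which case $\alpha_0 = \alpha - \tr(\alpha)/2 \in \L \cap D_0 = \L_0$ and so $\alpha \in \Z \oplus \L_0$ already, or $\tr(\alpha)$ is odd, in which case $\alpha - \alpha^*$ has even trace (since then $n = 1$ and $\tr(\alpha^*) = 1$) and hence $\alpha - \alpha^* \in \Z \oplus \L_0$; so $\L = (\Z \oplus \L_0) \cup \bigl(\alpha^* + \Z \oplus \L_0\bigr)$, giving index at most $2$.
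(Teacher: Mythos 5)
Your proof is correct and rests on the same key observation as the paper's: an element $\alpha \in \L$ lies in $\Z \oplus \L_0$ precisely when $\tr(\alpha)$ is even, so the quotient $\L/(\Z\oplus\L_0)$ has order at most $2$. The paper phrases this more tersely (two elements of odd trace sum to one of even trace), while you formalize it via the trace homomorphism onto $n\Z$, but the argument is essentially identical.
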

\begin{proof}
Given an element $\ell \in \L$, we have $\tr(\ell) \in \Z$ and furthermore, $\ell$ belongs to $\Z \oplus \L_0$ if and only if $\tr(\ell) \in 2\Z$. So if $\ell_1$ and $\ell_2$ are two elements in $\L$, neither of which belong to $\Z \oplus \L_0$, then $\ell_1 + \ell_2 \in \Z \oplus \L_0$. The statement follows.
\end{proof}

Let $\L$ be a lattice as in Lemma \ref{l:asymp}, and let $N$ be the level of $\L$, and $M$ the level of $\L_0$. Using Lemma \ref{l:asymp} and the fact that $\O^\m_0 + \Z$ has index two in $\O^\m$, we observe that \begin{equation}\label{e:card}M = [\O^\m_0: \L_0] =  \frac{[\O^\m: \Z \oplus \L_0]}{2} = \frac{N}e, \quad e \in \{1,2\} \end{equation} where $e$ equals 2 if and only if $\L = \Z \oplus \L_0$. So $N \asymp M=M_1M_2M_3$.

\begin{remark}\label{rem:shapeinv} Consider the lattice $\L$ in $O^\m$. The invariant factors of $\L$ with respect to $\O^\m$ are $(1, M_1', M_2', M_3')$, for some integers $M_1'|M_2'|M_3'$.
Now, using Lemma \ref{l:asymp}, we obtain for $i=1,2,3$, \begin{equation}\label{e:card2}M'_i  = e_i M_i, \quad e_i\in \{1, 2\},  \quad e_1e_2e_3 = e \in \{1,2\}. \end{equation}\end{remark}

We now state our main counting result.

\begin{proposition}\label{t:counting} Let $\L \subseteq \O^\m$ be a \emph{lattice} containing 1. Suppose that $\L$ has shape $(M_1, M_2, M_3)$ and level $N$. Let $z \in \J$ and $1 \le L \le N^{O(1)}$. Then the following hold.

\begin{equation}\label{eq1}\sum_{1\le m \le L } |\L(m;z, \delta)| \ll_{\epsilon, \delta} N^{\epsilon} \left(  L^{1/2} + \frac{L}{M_1} +\frac{L^{3/2}}{M_1M_2} +\frac{L^2}{N} \right).\end{equation}
 \begin{equation}\label{eq2}\sum_{1\le m \le L } |\L(m^2;z, \delta)| \ll_{\epsilon, \delta} N^{\epsilon} \left(L +\frac{L^{2}}{M_1M_2} +\frac{L^3}{N}   \right).\end{equation}

\end{proposition}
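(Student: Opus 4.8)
The plan is to reduce the counting of $\L(m; z, \delta)$ to counting lattice points in $\Z^3 \cong \O^\m_0$ (or rather in $\Z \oplus \L_0$, using Lemma~\ref{l:asymp}) that satisfy one diophantine inequality coming from the distance bound and two divisibility/congruence constraints encoding membership in $\L$ and the norm condition. First I would write a general element $\alpha \in \L(m)$ as $\alpha = \tfrac12\bigl(\tr(\alpha)\bigr) + \alpha_0$ with $\alpha_0 = M_1 x_1 \Delta_1 + M_2 x_2 \Delta_2 + M_3 x_3 \Delta_3 \in \L_0$; then $\nr(\alpha) = \tfrac14\tr(\alpha)^2 - \nr(\alpha_0) = m$ (using that $\nr$ restricted to $\O^\m_0$ is a negative-definite quadratic form, since $D$ is a division algebra), so $\nr(\alpha_0) = \tfrac14\tr(\alpha)^2 - m$. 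The distance condition $u(z, \iota_\infty(\alpha)z) \le \delta$ together with formula~\eqref{e:dist1} forces $\iota_\infty(\alpha)$ — equivalently $\tr(\alpha)$ and the entries of $\iota_\infty(\alpha_0)$ — to lie in a box of size $\ll_\delta \sqrt{m}$ (this is the standard observation that all entries of a matrix moving a fixed compact set a bounded hyperbolic distance are $\ll \sqrt{\det}$, uniformly for $z \in \J$). So $\tr(\alpha)$ takes $\ll \sqrt L$ values, and for each fixed value of $t = \tr(\alpha)$ we must count integer triples $(x_1, x_2, x_3)$ with $\iota_\infty\!\left(\sum M_i x_i \Delta_i\right)$ in a fixed box of size $\ll \sqrt m \le \sqrt L$ and $\nr\!\left(\sum M_i x_i \Delta_i\right) = \tfrac14 t^2 - m$.

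Next I would handle the lattice-point count. Since the $\Delta_i$ are a $\Z$-basis of $\O^\m_0$ and $\iota_\infty$ is a fixed linear isomorphism onto $M_2(\R)^{\tr = 0}$, the image $\iota_\infty(\O^\m_0)$ is a fixed lattice of rank $3$ in $\R^3$; the scaled lattice with $i$-th basis vector multiplied by $M_i$ has shortest vector of length $\gg M_1$ and covolume $\asymp M_1 M_2 M_3 \asymp N$. A three-dimensional version of the elementary ball-counting lemma (Lemma~\ref{countinglemma} in the commented-out section, or the remark after it, generalised to $\R^3$) then bounds the number of such triples in a box of radius $R \asymp \sqrt L$ by
\[
\ll 1 + \frac{R}{M_1} + \frac{R^2}{M_1 M_2} + \frac{R^3}{N} \ll 1 + \frac{L^{1/2}}{M_1} + \frac{L}{M_1 M_2} + \frac{L^{3/2}}{N}.
\]
Multiplying by the $\ll \sqrt L$ choices of $t$ and summing gives exactly the shape of~\eqref{eq1}. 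For~\eqref{eq2}, where $m$ runs over perfect squares $m = k^2$ with $1 \le k \le L$, I use the extra algebraic leverage that was exploited in the commented-out Lemma~\ref{l:general}: when $\nr(\alpha) = k^2$ and $t = \tr(\alpha)$, the identity $(t - 2k)(t+2k) = t^2 - 4k^2 = 4\nr(\alpha_0) = -\tr(\alpha_0)^2 + \dots$ — more precisely $t^2 - 4k^2$ is a bounded nonzero integer determined by $\alpha_0$ — so for each $\alpha_0$ there are only $\ll N^\eps$ divisor-pairs $(t, k)$; this removes one factor of $\sqrt L$ but, because the box now has size $\asymp k \le L$ rather than $\sqrt L$, the net effect is to replace $(L^{1/2}, L, L^{3/2}, L^2)$ in the radius bookkeeping by $(1, L, L^2, L^3)$ appropriately, yielding $N^\eps(L + L^2/(M_1M_2) + L^3/N)$ as claimed (the $L$ term absorbing the $\ll L$ parabolic/degenerate contribution where $t^2 = 4k^2$, handled separately as in Lemma~\ref{l:para}).

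I would organise the write-up so that the core is a self-contained three-dimensional lattice-counting lemma, proved by slicing $\R^3$ into $\asymp 1 + R/M_1$ parallel planes (each at spacing $\gg M_1$ in the direction dual to the successive-minima flag) and applying the two-dimensional Lemma~\ref{countinglemma} on each slice to the projected lattice, whose shortest vector is $\gg M_2$ and covolume $\asymp M_2 M_3 \asymp N/M_1$. The reason to phrase everything through the invariant factors $M_1 \mid M_2 \mid M_3$ rather than through successive minima — as emphasised in Section~\ref{s:introkeyideas} — is that these are preserved under $(\O^\m)^\times$-conjugation, which is what makes the eventual bound uniform over the compact set $\J$. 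The main obstacle is the uniformity of the box size in $z$: one must verify that the implied constant in ``$u(z, \iota_\infty(\alpha) z) \le \delta \Rightarrow \iota_\infty(\alpha)$ has all entries $\ll_\delta \sqrt m$'' depends only on $\delta$ and $\J$ and not on $z$ or on $\L$; this follows because $z$ ranges over a fixed compact set and the entrywise bound comes from~\eqref{e:dist1} after bounding $|z|$, $y^{-1}$, $y$ on $\J$, but it needs to be stated carefully. A secondary technical point is bookkeeping the harmless factor $e \in \{1,2\}$ from Lemma~\ref{l:asymp} and Remark~\ref{rem:shapeinv} when passing between $\L$ and $\Z \oplus \L_0$, and treating the degenerate cases ($\alpha$ scalar, or $\alpha$ fixing a point so that the relevant quadratic form vanishes) as a negligible $O(N^\eps)$, exactly as in the split-case lemmas quoted above.
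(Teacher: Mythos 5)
Your skeleton is genuinely different from the paper's and much of it is sound: the reduction, via compactness and \eqref{e:dist1}, to counting points of $\Z\oplus\L_0$ in a box of radius $\ll_\delta\sqrt m$ uniformly for $z\in\J$ is exactly the paper's Lemma \ref{l:compactdisc}; dropping the norm condition and counting $(t,\alpha_0)$ pairs for \eqref{eq1}, and using the factorisation $(t-2m)(t+2m)=4\alpha_0^2$ together with the division-algebra argument for the degenerate case in \eqref{eq2}, also match the paper. The gap is in the one step that carries all the difficulty: your three-dimensional counting lemma $\#\{\alpha_0\in\L_0:|\alpha_0|\le R\}\ll 1+R/M_1+R^2/(M_1M_2)+R^3/N$. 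Your slicing justification asserts that the relevant rank-two lattice (projection or slice) has shortest vector $\gg M_2$ and covolume $\asymp M_2M_3$; both claims are false in general, because successive minima are not controlled individually by invariant factors. Concretely, model $\O^\m_0$ by $\Z^3$ and let $\L_0$ be spanned by $(1,M,0)$, $(0,M^2,0)$, $(0,0,M^2)$: the invariant factors are $(1,M^2,M^2)$, yet $(M,0,0)$ and $(1,M,0)$ both lie in $\L_0$, so $\lambda_1\asymp\lambda_2\asymp M$; the projection along the shortest vector contains a vector of length $\asymp M\ll M_2=M^2$ and has covolume $\asymp M^3$, not $\asymp M_2M_3=M^4$. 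This is precisely the obstruction the paper flags in Section \ref{s:introkeyideas}: one cannot pretend $\lambda_i\gg M_i$, and standard successive-minima bounds are not conjugation-invariant.

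The bound you want is nevertheless true, but it requires a lemma you neither state nor prove: for $\L_0\subseteq\O^\m_0$ with invariant factors $M_1\,|\,M_2\,|\,M_3$ one has $\lambda_1\gg M_1$ (clear, since $\L_0\subseteq M_1\O^\m_0$) and, crucially, $\lambda_1\lambda_2\gg M_1M_2$; combined with $\lambda_1\lambda_2\lambda_3\asymp N$ and the standard estimate $\#(\L_0\cap B_R)\ll\prod_i(1+R/\lambda_i)$ this yields your counting lemma. The product inequality can be proved by a saturation argument: if $v_1,v_2$ realise $\lambda_1,\lambda_2$ and $P'$ is the saturation of $\Z v_1+\Z v_2$ in $\O^\m_0$, then $\O^\m_0/(\L_0+P')$ is a cyclic quotient of $\O^\m_0/\L_0$, hence of order at most $M_3$, so $[P':P'\cap\L_0]\ge M_1M_2$ and $\lambda_1\lambda_2\ge\det(\Z v_1+\Z v_2)\ge[P':\Z v_1+\Z v_2]\,\det(P')\gg M_1M_2$. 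With that supplied, your route is a legitimate alternative to the paper's, which avoids geometry of numbers entirely: Lemma \ref{l:key} is used to build a unimodular change of variables with entries $\ll_\eps N^\eps$ after which membership in $\L_0$ becomes explicit congruences modulo $M_1$, $M_2$, $M_3$ for the transformed coordinates, which are then counted directly. Two further small slips, neither fatal: the reduced norm on $\O^\m_0$ is an \emph{indefinite} ternary form (since $D_\infty\simeq M_2(\R)$), not negative definite, and the correct identity is $\nr(\alpha)=\tfrac14\tr(\alpha)^2+\nr(\alpha_0)$; your argument is unaffected because the box constraint comes from the distance condition, not from any positivity of the norm.
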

\begin{remark}The constants implicit in the bounds above depend only  on $\eps, \delta$ and the \emph{fixed} objects $D, \O^\m, \iota_\infty, \J$. \end{remark}

\begin{remark}Note that the bounds do \emph{not} depend on the elements $\Delta_1$, $\Delta_2$, $\Delta_3$. Hence the bounds obtained are \emph{uniform} over all $(\O^\m)^\times$-conjugates of $\L$. This will be key for us later on.
\end{remark}

\begin{remark}Because of \eqref{e:card} one can replace $N$ by $M$ in the theorem above, if one wishes. Furthermore, because of \eqref{e:card2}, one can replace $M_1$, $M_2$ in the theorem above by $M_1'$, $M_2'$ respectively, if one wishes to.
\end{remark}

\begin{remark}The bound obtained in Proposition \ref{t:counting} is not sufficient for our purposes when $M_1M_2$ is small in relation to $N$. So in Section \ref{s:countingorder}, we will prove another counting result under the additional assumption that $\L$ is an \emph{order}.
\end{remark}

 \subsection{Proof of Proposition \ref{t:counting}}\label{s:proofcount}
\begin{lemma}\label{l:compactdisc}For any $\delta>0$, there exists a constant $T$ (depending on $\delta$, $\J$ and $\iota_\infty$) with the following property: For $m>0$, $z \in \J$, and $\alpha = a_0 + a_1i_1 + a_2ei_2+ a_3i_3\in D(m)$ satisfying $u(z, \iota_\infty(\alpha)z) \le \delta$,  we have $$|a_i| \le T m^{1/2} \text{ for } i=0,1,2,3.$$
\end{lemma}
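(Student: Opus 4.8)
The plan is to reduce the statement to a compactness argument on $\J$ combined with the explicit formula \eqref{e:dist1} for the hyperbolic distance $u(z, \iota_\infty(\alpha)z)$. First I would write $\iota_\infty(\alpha) = \mat{A}{B}{C}{E}$ with $\det = m$ (so $AE - BC = m$), and recall from the earlier discussion (the formula labelled \eqref{e:dist1}) that
\[
u(z, \iota_\infty(\alpha)z) = \frac{|-Cz^2 + (A - E)z + B|^2}{m\,y^2}.
\]
Thus the hypothesis $u(z,\iota_\infty(\alpha)z) \le \delta$ says precisely that $|-Cz^2 + (A-E)z + B| \le \sqrt{\delta}\, y \, m^{1/2}$. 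Since $z = x + iy$ ranges over the compact set $\J$, the quantity $y$ is bounded above and below by positive constants depending only on $\J$, so this gives $|-Cz^2 + (A-E)z + B| \ll_{\delta,\J} m^{1/2}$.

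The key point is now linear algebra. The three functions $z \mapsto 1$, $z \mapsto z$, $z \mapsto z^2$ (equivalently, after taking real and imaginary parts and using $1$, $x$, $y$, $x^2 - y^2$, $xy$) together with the entries of $\iota_\infty(i_1), \iota_\infty(ei_2), \iota_\infty(i_3)$, and $\iota_\infty(1) = I$ set up a fixed $\R$-linear map. Concretely, writing $\alpha = a_0 + a_1 i_1 + a_2 e i_2 + a_3 i_3$ and applying $\iota_\infty$, the vector $(-C, A-E, B) \in \R^3$ is a fixed $\R$-linear function $\Phi_z$ of $(a_1, a_2, a_3)$ (the $a_0$ part contributes only to the diagonal and cancels in $A - E$, which is why $a_0$ does not appear), and moreover $A + E = \tr(\iota_\infty(\alpha)) = \tr(\alpha) = 2a_0 + a_1\tr(i_1) + a_2 e\tr(i_2) + a_3 \tr(i_3) = 2a_0$ since the $i_j$ have trace zero. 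The crucial fact is that $\Phi_z$ is injective for every $z \in \H$: if $-Cz^2 + (A-E)z + B = 0$ with $z$ non-real and $A+E = 0$, then $\iota_\infty(\alpha)$ would fix $z$ and have trace $0$, forcing (since $\det = m > 0$) $\iota_\infty(\alpha) = 0$, hence $\alpha = 0$, hence $(a_1,a_2,a_3) = 0$. So $\Phi_z$ is an injective linear map $\R^3 \to \R^3$ (equivalently $\R^3 \to \R^6$ after taking real/imaginary parts) depending continuously on $z$.

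By continuity of $z \mapsto \Phi_z$ and the compactness of $\J$, the smallest singular value of $\Phi_z$ is bounded below by a positive constant $c = c(\J, \iota_\infty) > 0$ uniformly for $z \in \J$; hence $\|(a_1,a_2,a_3)\| \le c^{-1}\|\Phi_z(a_1,a_2,a_3)\| \ll_{\delta,\J,\iota_\infty} m^{1/2}$, giving $|a_i| \le T m^{1/2}$ for $i = 1,2,3$. Finally $|2a_0| = |A + E| \le |A| + |E|$, and $|A|, |E|$ are controlled since all four matrix entries are bounded: $|A - E|$ and $B, C$ are $\ll m^{1/2}$ from the distance bound, while $AE = m + BC \ll m$ together with $A + E = 2a_0$ and $|A-E| \ll m^{1/2}$ pins down $|a_0| \ll m^{1/2}$ as well (alternatively, just note $\max(|A|,|E|) \le \sqrt{|A-E|^2 + 4|AE|}^{\,}$-type bound, or observe directly that $u(z, \iota_\infty(\alpha) z) \le \delta$ with $z \in \J$ forces $\iota_\infty(\alpha) z$ to lie in a fixed compact set, hence all entries of $\iota_\infty(\alpha)$ are $O(m^{1/2})$ by a standard estimate). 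Enlarging $T$ to absorb all the implied constants finishes the proof. I do not expect a genuine obstacle here; the only mild subtlety is organizing the uniformity over $z \in \J$, which is handled cleanly by the compactness of $\J$ and continuity of the relevant linear map.
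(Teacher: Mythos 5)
Your central linear-algebra step is false, and the gap is genuine. From the distance formula you only control the single complex number $-Cz^2+(A-E)z+B$, i.e.\ two real linear functionals of the three real coordinates $(a_1,a_2,a_3)$ of the trace-zero part of $\alpha$; the relevant map $\Phi_z\colon (a_1,a_2,a_3)\mapsto -Cz^2+(A-E)z+B$ is a linear map $\R^3\to\R^2$ and therefore has a nontrivial kernel, so no uniform lower bound on singular values can exist. Your purported proof of injectivity is exactly where the error hides: a trace-zero real $2\times 2$ matrix with \emph{positive} determinant that fixes a non-real point need not vanish --- on the contrary, positivity of the determinant is precisely the condition under which a trace-zero matrix has non-real fixed points (for instance $\mat{0}{1}{-1}{0}$ has trace $0$, determinant $1$, and fixes $i$). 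Concretely, for each $z\in\H$ the kernel of $\Phi_z$ is the line spanned by $\iota_\infty^{-1}(\kappa_z)$, where $\kappa_z$ is the trace-zero elliptic element fixing $z$: along this line the quadratic $-Cz^2+(A-E)z+B$ vanishes identically while $(a_1,a_2,a_3)$ is unbounded. The lemma survives only because the reduced norm grows quadratically in that direction, so the constraint $\nr(\alpha)=m$ (and not merely $\nr(\alpha)>0$, which is all your injectivity argument uses) is what caps the size; any correct proof must exploit $\det(\iota_\infty(\alpha))=m$ quantitatively to control the component along the stabilizer direction, which your continuity-plus-compactness-of-$\J$ singular-value argument never does.

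Ironically, the parenthetical aside at the end of your write-up --- that $u(z,\iota_\infty(\alpha)z)\le\delta$ with $z\in\J$ forces $\iota_\infty(\alpha)z$ into a fixed compact subset of $\H$, whence all four entries of $\iota_\infty(\alpha)$ are $O(m^{1/2})$ --- is essentially the entire proof, and is what the paper actually does: the set of elements of $\SL_2(\R)$ moving some point of the compact set $\J$ by hyperbolic distance at most $\delta$ is compact, so $\iota_\infty(\alpha)/\sqrt{m}$ lies in a fixed compact subset of the norm-one elements of $D_\infty$, and hence all coordinates $a_i/\sqrt{m}$ are bounded. If you want it explicit rather than by compactness: $\Im(\gamma z)=m\,\Im(z)/|Cz+E|^2$ staying in a compact set gives $|Cz+E|\asymp m^{1/2}$, hence $|C|,|E|\ll m^{1/2}$ since $\Im(z)\gg 1$ on $\J$, and then $Az+B=(\gamma z)(Cz+E)$ gives $|A|,|B|\ll m^{1/2}$. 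Promote that aside to the main argument and discard the false injectivity claim; as written, the main route fails.
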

\begin{proof}It suffices to show that the set of all $\frac{\alpha}{\sqrt{m}}$ as above lies in a compact set depending only on $\delta$, $\J$ and $\iota_\infty$. The subset $\Omega_{\J, \delta}$ of $\SL_2(\R)$ given by $$\Omega_{\J, \delta}=\{\gamma \in \SL_2(\R): u(z, \gamma z) \le \delta \text{ for all }z \in \J\}$$ is compact since the stabilizer of each point is compact, and $\J$ is compact. Therefore the subset $ \iota_\infty^{-1}(\Omega_{\J, \delta})$ is a compact subset of $D(1)$ that contains all the elements $\frac{\alpha}{\sqrt{m}} = \frac{a_0 + a_1i_1 + a_2i_2+ a_3i_3}{\sqrt{m}}$ as in the Lemma. The result follows.
\end{proof}

We now prove Proposition \ref{t:counting}. Using Lemma \ref{l:asymp}, we may assume that $\L=\Z \oplus \L_0$. Indeed, putting $\L' = \Z \oplus \L_0$, we see that $$|\L'(m;z,\delta)| \le |\L(m;z,\delta)| \le |\L'(4m;z,\delta)|.$$ So by shrinking $\L$ if necessary, we will assume throughout the rest of this subsection that \begin{equation}\label{e:assumplattice}\L = \Z \oplus \L_0.\end{equation} Now, using Lemma \ref{l:compactdisc}, we see that Proposition \ref{t:counting} would follow from the following statement:

\medskip

\emph{Let $\L = \Z \oplus M_1\Z\Delta_1 \oplus M_2\Z\Delta_2\oplus M_3\Z\Delta_3$ be a lattice of shape $(M_1, M_2, M_3)$ and level $N$, where  $\Delta_1, \Delta_2, \Delta_3 \in \O^\m_0$ and $M_1|M_2|M_3$.  For $T>0$,  define $$\L(m, T) = \{ a_0 + a_1i_1 + a_2i_2+ a_3i_3\in \L(m): a_i \in \Z, \ |a_i| \le T m^{1/2} \text{ for } 0\le i \le 3\}.$$ Then for $1 \le L \le N^{O(1)}$ we have:}
\begin{equation}\label{eq1new} \#\bigg(\bigcup_{1 \le m \le L}\L(m, T) \bigg) \ll_{\epsilon, T} N^{\epsilon} \left(  L^{1/2} + \frac{L}{M_1} +\frac{L^{3/2}}{M_1M_2} +\frac{L^2}{N} \right).\end{equation}
 \begin{equation}\label{eq2new}\#\bigg(\bigcup_{\substack{1 \le m \le L \\ \sqrt{m} \in \Z}}\L(m, T) \bigg) \ll_{\epsilon, T} N^{\epsilon} \left(L^{1/2} +\frac{L}{M_1M_2} +\frac{L^{3/2}}{N}   \right).\end{equation}

  We now begin the proof of the bounds \eqref{eq1new}, \eqref{eq2new}. This will complete the proof of Proposition \ref{t:counting}. For brevity, we drop $T$ from the $\ll$ symbol in the rest of this subsection (so all constants henceforth are allowed to depend on $T$).

 Since  $(\Delta_1, \Delta_2, \Delta_3)$ and $(i_1, i_2, i_3)$ are integral bases  for $\O_0$, it follows that  there exists a matrix $\delta \in \GL_3(\Z)$ such that \begin{equation}\label{e:deltaji} [\Delta_1, \Delta_2, \Delta_3]^t = \delta \ [i_1, i_2, i_3]^t. \end{equation} We  note that \begin{equation}\label{e:det}\det(\delta) = \pm 1.\end{equation} In paticular,
$\gcd(\delta_{1,1}, \delta_{1,2}, \delta_{1,3})=1$. Therefore, using Lemma \ref{l:key}, we fix integers $r_2$, $r_3$, both $\ll_\eps N^\eps$, such that \begin{equation}\label{e:R}R = \delta_{1,1} + r_2 \delta_{1,2} + r_3 \delta_{1,3} \text{ satisfies }  \gcd(R, N)=1.\end{equation}

 Define $R_2$, $R_3$ via the equation  \begin{equation}\label{e:R2R3}\delta \ [1, r_2, r_3]^t = [R,R_2, R_3]^t.\end{equation} Fix an integer $\bar{R}$ such that $R\bar{R} \equiv 1 \pmod{N}.$ Now define
 $$h = \begin{bmatrix}1&0&0\\ - R_2 \bar{R}&1&0 \\ 0&0&1 \end{bmatrix}.$$ Let the integers $S_1$, $S_2$, $S_3$ be defined via the equation $$[0,1,0]h\delta = [S_1, S_2, S_3].$$

 We claim that $\gcd(S_1, S_2, S_3, N)=1$.
Suppose this were not true. Then we would be able to find a prime $p|N$ such that $$[0,1,0] h \delta  \equiv [0,0,0] \pmod{p}.$$  Since $\delta \in \GL_3(\Z)$, this implies that  \begin{equation}\label{e:contra}[0,1,0] h \equiv [0,0,0] \pmod{p}.\end{equation}
However, the left-hand-side of \eqref{e:contra} is a vector of the form $[*, 1 ,0]$, leading to a contradiction. This contradiction proves that $\gcd(S_1, S_2, S_3, N)=1$.

Using Lemma \ref{l:key}, we now fix integers $s_2$, $s_3$, both $\ll_\eps N^\eps$ and such that  $$s_2 \ne r_2, \quad S = S_1 + s_2 S_2 + s_3 S_3 \text{ satisfies } \quad \gcd(S, N)=1.$$ Define the element $S'$ via $$S' = [1,0,0]\delta [1, s_2, s_3]^t.$$ Put $$g = \begin{bmatrix}1&1&0\\r_2&s_2&0\\r_3&s_3&1\end{bmatrix}.$$ By our assumption that $s_2 \neq r_2$, we have $\det(g) \neq 0$.  Moreover, from \eqref{e:R2R3} and the definitions of $h$, $S$, $S'$, we see that \begin{equation}\label{e:needed}h \delta g \equiv \begin{bmatrix}R&S'&\delta_{1,3}\\0&S&-R_2 \bar{R} \delta_{1,3} + \delta_{2,3}\\ * &*&* \end{bmatrix} \pmod{N}.\end{equation}

We now begin the proof of \eqref{eq1new}. Let $$\alpha \in \bigcup_{1 \le m \le L}\L(m, T).$$ Our strategy will be to associate to each such $\alpha$ a quadruple $(a_0, A_\alpha, B_\alpha, a_3)$ such that the function $\alpha \mapsto  (a_0, A_\alpha, B_\alpha, a_3)$ is injective. A bound for the cardinality of $\bigcup_{1 \le m \le L}\L(m, T)$  will then follow by bounding the number of distinct tuples $(a_0, A_\alpha, B_\alpha, a_3)$ that are possible.

Write $$\alpha = a_0 + a_1i_1 + a_2i_2+ a_3i_3.$$ We have that $|a_i| \ll L^{1/2}$. Furthermore, using the definition of $\L$ together with \eqref{e:deltaji}, it follows that there exist integers $u_1, u_2, u_3$ such that
\begin{equation}\label{e:ai} [a_1, a_2, a_3] = [u_1M_1, u_2M_2, u_3 M_3] \delta.\end{equation}
Define $A_\alpha$, $B_\alpha$ via the equation \begin{equation}\label{e:Aalpa}[A_\alpha, B_\alpha, a_3] = [a_1, a_2, a_3] g = [u_1M_1, u_2M_2, u_3 M_3] \delta g.\end{equation}
 We saw already that $g$ is an invertible matrix and therefore the mapping $\alpha \mapsto  (a_0, A_\alpha, B_\alpha, a_3)$ is injective, as required.

It remains to  bound the number of distinct tuples of integers $(a_0, A_\alpha, B_\alpha, a_3)$ that are possible. To achieve that, we will prove certain bounds and congruences satisfied by such tuples. First of all, using \eqref{e:Aalpa} and the fact that $r_2$, $r_3$ are both $\ll_\eps N^\eps$, it follows that
$|A_\alpha| \ll_\eps L^{1/2}N^\eps$ and $A_\alpha \equiv 0 \pmod{M_1}$. \emph{So there are $\ll_\eps N^\eps\left(1 + \frac{L^{1/2}}{M_1}\right)$ choices for $A_\alpha$. Henceforth, assume such a choice has been made.}

Next, using \eqref{e:needed} and \eqref{e:Aalpa} we see that \begin{align}[A_\alpha, B_\alpha, a_3] &= [u_1M_1, u_2M_2, u_3 M_3] h^{-1} h \delta g \nonumber  \\ \label{penul} &\equiv [u_1M_1 + R_2\bar{R}u_2M_2, u_2M_2, 0] \begin{bmatrix}R&S'&\delta_{1,3}\\0&S&-R_2 \bar{R} \delta_{1,3} + \delta_{2,3}\\ * &*&* \end{bmatrix}  \pmod{M_3} \\  \label{ult} &\equiv [u_1M_1, 0, 0] \begin{bmatrix}R&S'&\delta_{1,3}\\ *&*&* \\ *&*&* \end{bmatrix}  \pmod{M_2}.\end{align}

The last congruence \eqref{ult} above gives us $B_\alpha \equiv  A_\alpha \bar{R} S' \pmod{M_2}$. \emph{Since $|B_\alpha| \ll_\eps L^{1/2}N^\eps$, there are $\ll_\eps N^\eps\left(1 + \frac{L^{1/2}}{M_2}\right)$ choices for $B_\alpha$. Henceforth, assume such a choice has been made.}

Furthermore, \eqref{penul} directly leads to an expression of the form $$[u_1M_1 + R_2\bar{R}u_2M_2, u_2M_2] \equiv [A_\alpha \bar{R}, \, \bar{S}\left(B_\alpha -  A_\alpha \bar{R} S' \right)] \pmod{M_3}.$$  Therefore, another application of \eqref{penul} tells us that $a_3$ is known modulo $M_3$ in terms of choices that have already been made. Indeed, if one were to actually perform the above steps, one would arrive at the explicit expression \begin{equation}\label{e:a3} a_3 \equiv  A_\alpha \bar{R} \delta_{1,3} + \bar{S}\left( B_\alpha -  A_\alpha \bar{R} S' \right)(\delta_{2,3} - R_2 \bar{R} \delta_{1,3}) \pmod{M_3}.\end{equation}

\emph{Since $|a_3| \ll_\eps L^{1/2} $ there are $\ll_\eps N^\eps\left(1 + \frac{L^{1/2}}{M_3}\right)$ choices for $a_3$.}

\emph{Finally, as $|a_0| \ll L^{1/2}$, there are $\ll L^{1/2}$ possible choices for $a_0$.}

Combining all the above italicized statements, we see that there are \begin{align*}&\ll_\eps N^\eps L^{1/2} \left(1 + \frac{L^{1/2}}{M_1}\right)\left(1 + \frac{L^{1/2}}{M_2}\right)\left(1 + \frac{L^{1/2}}{M_3}\right)\\& \ll_\eps  N^{\epsilon} \left(  L^{1/2} + \frac{L}{M_1} +\frac{L^{3/2}}{M_1M_2} +\frac{L^2}{N} \right). \end{align*} possible different choices for triples $(a_0, A_\alpha, B_\alpha, a_3)$ as $\alpha$ varies in $\bigcup_{1 \le m \le L}\L(m, T)$. By the injectivity noted earlier, this completes the proof of \eqref{eq1new}.

To prove \eqref{eq2new}, define $\alpha_0 := (\alpha - \overline{\alpha})/2 = a_1i_1 + a_2i_2+ a_3i_3$ . Now, only consider those $\alpha$ such that $m=\nr(\alpha) = \ell^2$ for some $\ell \in \Z$, $0 \le \ell \le L^{1/2}$.  Then we get \begin{equation}\label{e:factor}(a_0 - \ell)(a_0 + \ell) = \alpha_0^2 = -\nr(\alpha_0).\end{equation} Now, $\nr(\alpha_0)$ is an integer and  $|\nr(\alpha_0)| \ll N^{O(1)}$. So, if $\nr(\alpha_0) \neq 0$ then \eqref{e:factor} tells us that there are $\ll_\eps N^\eps$ possibilities for $a_0$. If $\nr(\alpha_0) = 0$ then we must have $\alpha_0 =0$ (since $D$ is a\emph{ division }algebra) and so there are $\ll L^{1/2}$ possibilities for $a_0 = \alpha$. Putting it together, we see that the number of elements in $\bigcup_{\substack{1 \le m \le L \\ \sqrt{m} \in \Z}}\L(m, T)$ is  \begin{align*}& \ll_\eps L^{1/2} + N^\eps \left(  1 + \frac{L^{1/2}}{M_1} +\frac{L}{M_1M_2} +\frac{L^{3/2}}{N} \right). \end{align*} This completes the proof of \eqref{eq2new}.
\subsection{A supplementary counting result for orders}\label{s:countingorder} In this subsection, we give another counting result to supplement Proposition \ref{t:counting}, but one that is applicable only if $\L=\O$ is an\emph{ order}.  \begin{proposition}\label{t:countingnew} Let $\O \subseteq \O^\m$ be an order of level $N$. There is a constant $C$ (depending on $\delta$, $\J$ and $\iota_\infty$) such that for $z \in \J$ and $1 \le m  < CN^{\frac13}$, we have
\begin{equation}\label{eq3} |\O(m;z, \delta)| \ll_{\epsilon, \delta} m^{\eps}.\end{equation}
\end{proposition}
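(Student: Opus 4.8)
The plan is to exploit the ring structure of $\O$ in the way pioneered by Templier: if $\alpha\in\O(m;z,\delta)$ with $m$ small, then $\iota_\infty(\alpha)$ is a matrix of determinant $m$ that moves the point $z$ only a bounded hyperbolic distance, so after scaling by $m^{-1/2}$ it lies in the fixed compact set $\iota_\infty^{-1}(\Omega_{\J,\delta})$ from Lemma~\ref{l:compactdisc}. I would first observe that $\alpha$ is a root of the monic integral polynomial $X^2-\tr(\alpha)X+m$, with $|\tr(\alpha)|\le 2m^{1/2}$ by the compactness estimate. There are two regimes. If $\tr(\alpha)^2=4m$ then $\alpha=m^{1/2}$ is a scalar, which can only happen for the $O(m^\eps)$ choices of $m$ that are perfect squares, and then it contributes at most one element — this case is negligible. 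So assume $\tr(\alpha)^2\ne 4m$, so that $\Q(\alpha)$ is a quadratic field $K$ embedded in $D$, and $\alpha$ generates an order in $K$ of discriminant dividing $\disc(X^2-\tr(\alpha)X+m)=\tr(\alpha)^2-4m$, which is a nonzero integer of size $\ll m$.

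The key point is then a separation/rigidity argument: two distinct elements $\alpha,\beta\in\O(m;z,\delta)$ lying in the \emph{same} quadratic subfield $K$ differ by an element $\alpha-\beta\in\O_0$ of trace zero and bounded archimedean size (size $\ll m^{1/2}$ in $\iota_\infty$), and $\nr(\alpha-\beta)=-(\text{something})$; one shows that either $\alpha-\beta$ has reduced norm that is too small to be a nonzero element of the rank-$3$ lattice $\O_0\subseteq\O^\m_0$ unless it is large, OR — more to the point — one counts the subfields. Concretely, the embeddings $K\hookrightarrow D$ that place an element within bounded distance of $z$ are essentially rigid: the imaginary quadratic point fixed by $\iota_\infty(K^\times)$ must be within $O(\delta)$ of $z$, and distinct fields give distinct fixed points that must be separated, so the number of relevant quadratic orders is controlled. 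For each such quadratic order $\O'\subseteq K$, the elements of $\O'$ of norm $m$ within the compact set number $O(m^\eps)$ by the standard bound on representations of $m$ by a fixed binary quadratic form (equivalently, the class-number/units count).

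The main obstacle I anticipate is making the transition from "$\alpha$ lies in some quadratic subfield of $D$" to "there are few such subfields (or few such $\alpha$) relevant to a fixed $z\in\J$" \emph{uniformly in the order $\O$ of level $N$}, using the constraint $m<CN^{1/3}$. The role of the hypothesis $m\ll N^{1/3}$ should be: if $\alpha,\beta\in\O(m;z,\delta)$ generated different subfields, then $\alpha\beta-\beta\alpha$ is a nonzero element of $\O$ (since $D$ is division and they don't commute), it has trace zero, it lies in $\O_0$ hence its coordinates in the basis $(i_1,i_2,i_3)$ are divisible by $M_1$ (or at least its reduced norm is $\gg M_1^2\gg$ something in terms of $N$), yet $\nr(\alpha\beta-\beta\alpha)\ll m^2\ll N^{2/3}$ by the compactness bound — and one arranges the $M_i$ so that $N^{2/3}$ is too small, forcing $\alpha\beta=\beta\alpha$, i.e. all of $\O(m;z,\delta)$ lies in a \emph{single} quadratic field $K$. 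Once everything is inside one $K$, I would finish as above: intersecting with the compact set and with the order $\O\cap K$ (a quadratic order of discriminant $\ll N^{O(1)}$), the count of norm-$m$ elements is $\ll m^\eps$ by the divisor bound. I expect the delicate part to be the bookkeeping showing that the commutator $\alpha\beta-\beta\alpha$ genuinely has reduced norm bounded by $O(m^2)$ while simultaneously being forced to be "$N^{2/3+}$-divisible" in a sense strong enough to contradict $m\ll N^{1/3}$ — this is exactly where Templier's $11/24$ exponent and the threshold $N^{1/3}$ come from, and where I would need to be careful about the constant $C$ and about the two-torsion subtleties from Lemma~\ref{l:asymp}.
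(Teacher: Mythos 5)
Your endgame matches the paper's: once everything lies in a single quadratic field $F$, you count norm-$m$ elements of $F\cap\O$ by the divisor bound together with the fact that only $O_\delta(1)$ norm-one units of the quadratic order move $z$ by at most $\delta$, and the scalar case is negligible. The genuine gap is in the step where the level $N$ must enter, i.e.\ in forcing two elements $\alpha,\beta\in\O(m;z,\delta)$ to commute. Your mechanism is to play the upper bound $\nr(\alpha\beta-\beta\alpha)\ll m^2$ against a lower bound coming from the commutator being a nonzero element of $\O_0$, whose coordinates are divisible by $M_1$, hence of reduced norm $\gg M_1^2$, and you hope to ``arrange the $M_i$'' so that this exceeds $N^{2/3}$. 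This cannot work: the shape is not at your disposal, and in precisely the regime where Proposition \ref{t:countingnew} is needed (levels close to squarefree, e.g.\ Eichler orders of squarefree level $N$, which have shape $(1,1,N)$) one has $M_1=M_2=1$, so $\O_0$ contains nonzero vectors of reduced norm $O(1)$ and the bound $\nr(\alpha\beta-\beta\alpha)\ll m^2$ contradicts nothing. Even in the most favourable shape one only has $M_1\ll N^{1/3}$, so $M_1^2\ll N^{2/3}$ is of the same size as $m^2$ and gives no margin; and when $M_1M_2$ is genuinely large, Proposition \ref{t:counting} already suffices, so your argument would work only where it is not needed. A single trace-zero element can only ever ``see'' the smallest invariant factor $M_1$; the statement of Proposition \ref{t:countingnew} makes no reference to the shape, so the proof must not either.

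The paper makes $N$ enter through a determinant of \emph{three} elements rather than one commutator. First, the $\Q$-algebra generated by $\O(m;z,\delta)$ is spanned by products $\beta_1\beta_2$ with $\beta_i\in\O(m;z,\delta)$, i.e.\ by elements of $\bigcup_{m'\le m^2}\O(m';z,2\delta)$. For any three such elements, the $3\times3$ matrix $A$ of the coordinates of their trace-zero parts along $i_1,i_2,i_3$ satisfies $M_1M_2M_3\mid\det A$ (writing the coordinates through the basis $\Delta_1,\Delta_2,\Delta_3$ of $\O^\m_0$, the determinant picks up \emph{all three} invariant factors, which is where $N\asymp M_1M_2M_3$ enters independently of the shape; Lemma \ref{l:asymp} handles the two-torsion issue you flagged), while Lemma \ref{l:compactdisc} gives $\det A\ll m^3$. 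Hence $\det A=0$ once $m<CN^{1/3}$, so $1$ together with any three such products is linearly dependent, the generated algebra is proper, and since $D$ is division it is $\Q$ or a quadratic field. If you want to salvage a two-element argument, the quantity to use is not $\nr(\alpha\beta-\beta\alpha)$ alone but the discriminant of the lattice $\Z+\Z\alpha+\Z\beta+\Z\alpha\beta\subseteq\O$: when of full rank it is divisible by $\disc(\O)$, while the compactness bound makes it $\ll m^4$; again the essential input is divisibility by the full index of $\O$ in $\O^\m$, not by $M_1$.
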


Our proof of Proposition \ref{t:countingnew} is broadly similar to that of Proposition 6.5 of \cite{templier-sup} (see also \cite{templier-sup-errata}).  The proof will follow from the following sequence of lemmas. Throughout the proof, we will use the notations introduced in Section \ref{s:quatfirst} and we will assume (without loss of generality) that \eqref{e:assumplattice} holds.

\begin{lemma}Let $\L$ be a subset of $D$ that is closed under multiplication and contains 1. Let $z\in \H$, $L$ a positive integer, and $\delta >0$. Then the $\Q$-algebra generated by all elements in $\bigcup_{1 \le m \le L}\L(m;z, \delta)$ is contained in the $\Q$-vector-space spanned by $\bigcup_{1 \le m \le L^2}\L(m;z, 2\delta).$
\end{lemma}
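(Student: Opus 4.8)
The plan is to exploit closure under multiplication together with the triangle inequality for the hyperbolic distance. Let $\alpha,\beta$ be any two elements of $\bigcup_{1\le m\le L}\L(m;z,\delta)$, say $\alpha\in\L(m_1;z,\delta)$ and $\beta\in\L(m_2;z,\delta)$ with $1\le m_1,m_2\le L$. Then $\alpha\beta\in\L$ (by closure under multiplication), $\nr(\alpha\beta)=\nr(\alpha)\nr(\beta)=m_1m_2$ so $1\le\nr(\alpha\beta)\le L^2$, and
$$u(z,\iota_\infty(\alpha\beta)z)\le u(z,\iota_\infty(\beta)z)+u(\iota_\infty(\beta)z,\iota_\infty(\alpha)\iota_\infty(\beta)z)\le\delta+u(z,\iota_\infty(\alpha)z)\le 2\delta,$$
where I used that $\iota_\infty$ is a homomorphism (so $\iota_\infty(\alpha\beta)=\iota_\infty(\alpha)\iota_\infty(\beta)$) and that the hyperbolic distance is $\SL_2(\R)$-invariant, hence invariant under $\iota_\infty(\beta)$ up to rescaling by the determinant — more precisely $u(\iota_\infty(\beta)z,\iota_\infty(\alpha)\iota_\infty(\beta)z)=u(z',\iota_\infty(\alpha)z')$ for $z'=\iota_\infty(\beta)z$, and this equals $u(z,\iota_\infty(\alpha)z)$ only if $\alpha$ acts by an isometry; the cleanest way is to note $u$ depends only on the image in $\PGL_2(\R)^+$ and $\iota_\infty(\beta)$ normalizes appropriately, so the shift by $\iota_\infty(\beta)$ is an isometry of $\H$. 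Thus $\alpha\beta\in\L(m_1m_2;z,2\delta)\subseteq\bigcup_{1\le m\le L^2}\L(m;z,2\delta)$.

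Next I would pass from products to the generated algebra. The $\Q$-algebra generated by a set $X$ of elements closed under nothing in particular is spanned as a $\Q$-vector space by $1$ together with all finite products $x_1x_2\cdots x_k$ of elements of $X$. However, since $D$ is a quaternion algebra, it is $4$-dimensional over $\Q$, and any element $x\in D$ satisfies its reduced characteristic polynomial $x^2-\tr(x)x+\nr(x)=0$; in particular for $x\in\L(m;z,\delta)$ we have $\tr(x),\nr(x)\in\Z$ with $\nr(x)=m$, so $x^2=\tr(x)x-m$ lies in the span of $1$ and $x$. More importantly, the $\Q$-algebra generated by $X$ is a subalgebra of $D$, hence has dimension $\le 4$; I claim it is already spanned by $1$ and products of \emph{at most two} elements of $X$. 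One way: a $\Q$-subalgebra $A$ of $D$ containing $1$ is either $\Q$, a quadratic field, or all of $D$. In the first two cases $A$ is spanned by $1$ and any single $x\in X\cap A$. In the case $A=D$, pick $\alpha,\beta\in X$ that generate $D$ as an algebra (they exist since $X$ generates $D$ and a single element generates at most a quadratic subalgebra); then $1,\alpha,\beta,\alpha\beta$ are linearly independent — a standard fact for quaternion algebras once $\alpha$ generates a quadratic subfield and $\beta$ does not lie in it — hence form a $\Q$-basis of $D$. In every case the generated algebra lies in the span of $\{1\}\cup X\cup\{\alpha\beta:\alpha,\beta\in X\}$.

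Finally I assemble: $\{1\}\subseteq\L(1;z,\delta)$ (the identity fixes $z$, $\nr(1)=1$), $X=\bigcup_{1\le m\le L}\L(m;z,\delta)\subseteq\bigcup_{1\le m\le L^2}\L(m;z,2\delta)$ trivially since $\delta\le 2\delta$ and $L\le L^2$, and the set of products $\alpha\beta$ lies in $\bigcup_{1\le m\le L^2}\L(m;z,2\delta)$ by the first paragraph. Therefore the $\Q$-algebra generated by $\bigcup_{1\le m\le L}\L(m;z,\delta)$ is contained in the $\Q$-span of $\bigcup_{1\le m\le L^2}\L(m;z,2\delta)$, as claimed. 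The main technical point to get right is the invariance of the hyperbolic distance under the left-translation by $\iota_\infty(\beta)$: since $\nr(\beta)=m_2$ may not be $1$, one should work with the action of $\PGL_2(\R)^+$ on $\H$ (under which $u$ is invariant) rather than literally with $\SL_2(\R)$, or equivalently rescale $\beta$ by $m_2^{-1/2}$, which changes nothing about which point of $\H$ it sends $z$ to. Everything else is routine linear algebra over $\Q$ in a $4$-dimensional algebra.
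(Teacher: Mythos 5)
Your algebraic reduction is fine, and in fact more explicit than the paper's: both you and the paper reduce the lemma to showing that a product of two elements of $\bigcup_{1\le m\le L}\L(m;z,\delta)$ lies in $\bigcup_{1\le m\le L^2}\L(m;z,2\delta)$, and your classification of unital subalgebras of the division algebra $D$ (namely $\Q$, a quadratic field, or $D$, with $1,\alpha,\beta,\alpha\beta$ a basis in the last case) is a correct justification of the paper's appeal to ``basic properties of a quaternion algebra''. The norm computation $\nr(\alpha\beta)=\nr(\alpha)\nr(\beta)\le L^2$ is also correct.

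However, the distance estimate, which is the only analytic content, is wrong as written. You insert the intermediate point $\iota_\infty(\beta)z$ and then need $u(\iota_\infty(\beta)z,\iota_\infty(\alpha)\iota_\infty(\beta)z)\le u(z,\iota_\infty(\alpha)z)$. By invariance of $u$ under the isometry $\iota_\infty(\beta)$ applied to \emph{both} arguments, the left-hand side equals $u(z,\iota_\infty(\beta^{-1}\alpha\beta)z)$, i.e.\ the displacement at $z$ of the \emph{conjugate} of $\alpha$, equivalently the displacement of $\alpha$ at the new base point $\iota_\infty(\beta)z$. The hypothesis only controls the displacement of $\alpha$ at $z$, and the displacement function $w\mapsto u(w,\iota_\infty(\alpha)w)$ of an isometry is not constant (for a hyperbolic element it grows with the distance of $w$ from the axis), so this inequality genuinely fails in general; your attempted patch (``the shift by $\iota_\infty(\beta)$ is an isometry of $\H$'') is precisely the invariance statement just used and does not yield the needed bound. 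The repair is to choose the other intermediate point, $\iota_\infty(\alpha)z$: then $u(z,\iota_\infty(\alpha\beta)z)\le u(z,\iota_\infty(\alpha)z)+u(\iota_\infty(\alpha)z,\iota_\infty(\alpha)\iota_\infty(\beta)z)$, and the second term equals $u(z,\iota_\infty(\beta)z)$ by invariance under $\iota_\infty(\alpha)$ applied to both points, giving the required $2\delta$. This is in effect what the paper does, writing $u(z,\iota_\infty(\beta_1\beta_2)z)=u(\iota_\infty(\beta_1^{-1})z,\iota_\infty(\beta_2)z)$ and then applying the triangle inequality through $z$.
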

\begin{proof}By basic properties of a quaternion algebra, any element of the $\Q$-algebra generated by $\bigcup_{1 \le m \le L}\L(m;z, \delta)$ is a $\Q$-linear combination of elements of the form $\beta=\beta_1\beta_2$ with $\beta_1, \beta_2 \in \bigcup_{1 \le m \le L}\L(m;z, \delta)$. So it suffices to show that any such $\beta$ belongs to $\bigcup_{1 \le m \le L^2}\L(m;z, 2\delta).$ This is clear as $\nr(\beta) = \nr(\beta_1)\nr(\beta_2)  \le L^2$ and $$u(z , \iota_\infty(\beta)z) = u(\iota_\infty(\beta_1^{-1})z,  \iota_\infty(\beta_2)z) \le u(z , \iota_\infty(\beta_1)z) + u(z , \iota_\infty(\beta_2)z) \le 2 \delta.$$
\end{proof}

\begin{lemma}Let $\L$ be a lattice in $D$ of level $N$, $z\in \J$, $L$ a positive integer, and $\delta >0$. Then there is a constant $C$ (depending on $\delta$, $\J$ and $\iota_\infty$) such that the  $\Q$-vector-space spanned by $\bigcup_{1 \le m \le L^2}\L(m;z, 2\delta)$ is proper whenever $L < C N^{1/3}$.
\end{lemma}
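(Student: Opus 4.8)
The plan is to argue by contradiction: suppose the $\Q$-span of $\bigcup_{1\le m\le L^2}\L(m;z,2\delta)$ equals all of $D$, and derive, for $L<CN^{1/3}$ with $C$ suitably small, a numerical impossibility. The first step is to note that $1\in\L(1;z,2\delta)$, since $1\in\L$, $\nr(1)=1$, and $u(z,\iota_\infty(1)z)=0\le 2\delta$; the spanning assumption then lets me complete $\{1\}$ to a $\Q$-basis $1=\alpha_0,\alpha_1,\alpha_2,\alpha_3$ of $D$ with each $\alpha_j\in\L(m_j;z,2\delta)$ for some $1\le m_j\le L^2$. Using the reduction \eqref{e:assumplattice} (so $\L=\Z\oplus\L_0\subseteq\Z\oplus\O^\m_0$, with $1,i_1,i_2,i_3$ a $\Z$-basis of $\Z\oplus\O^\m_0$), I write $\alpha_j=a_{j0}+a_{j1}i_1+a_{j2}i_2+a_{j3}i_3$ with $a_{jk}\in\Z$, so that the lattice $\Lambda'=\sum_j\Z\alpha_j$ satisfies $[\Z\oplus\O^\m_0:\Lambda']=\bigl|\det(a_{jk})_{0\le j,k\le 3}\bigr|$.

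The crux is to sandwich this index between two bounds. For the lower bound, $\Lambda'\subseteq\L=\Z\oplus\L_0$ and $[\Z\oplus\O^\m_0:\Z\oplus\L_0]=[\O^\m_0:\L_0]=M$, which by \eqref{e:card} equals $N/e$ with $e\in\{1,2\}$; hence $[\Z\oplus\O^\m_0:\Lambda']\ge N/2$. For the upper bound, the row of $\alpha_0=1$ is $(1,0,0,0)$, so expanding the determinant along it reduces it to the $3\times 3$ minor $\bigl|\det(a_{jk})_{1\le j,k\le 3}\bigr|$; and by the proof of Lemma~\ref{l:compactdisc} applied with $2\delta$ in place of $\delta$, each $\alpha_j/\sqrt{m_j}$ lies in the fixed compact set $\iota_\infty^{-1}(\Omega_{\J,2\delta})$, so $|a_{jk}|\ll m_j^{1/2}\le L$ with an implied constant depending only on $D,\O^\m,\iota_\infty,\J,\delta$. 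Expanding the $3\times 3$ determinant then yields $[\Z\oplus\O^\m_0:\Lambda']\ll L^3$. Comparing the two bounds gives $N/2\ll L^3$, and if $L<CN^{1/3}$ this forces $1\ll C^3$ with an absolute implied constant; choosing $C$ below the cube root of the reciprocal of that constant makes this impossible, so for such $C$ the span must be proper.

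The main point to be careful about is the role of the element $1$: it is essential to use it to pass from a $4\times 4$ determinant to a $3\times 3$ one, because the cruder estimate $[\Z\oplus\O^\m_0:\Lambda']\ll L^4$ on the full determinant would only give $N\ll C^4 N^{4/3}$, which is no contradiction at all — it is precisely the presence of $1$ (equivalently, that one only needs to span the three-dimensional trace-zero complement) that produces the exponent $\tfrac13$ in the statement. The other point to track, which is routine but must be recorded, is that every implied constant above depends only on the fixed data $D,\O^\m,\iota_\infty,\J,\delta$ and not on $\L$ or $N$, since this is what allows a single uniform choice of $C$ to work for all lattices $\L$ of all levels $N$.
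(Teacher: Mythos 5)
Your argument is correct and is essentially the paper's: both proofs rest on comparing a $3\times 3$ determinant of trace-zero coordinates of elements of $\bigcup_{1\le m\le L^2}\L(m;z,2\delta)$, bounded above by $\ll L^3$ via Lemma \ref{l:compactdisc}, against a lower bound of size $\asymp N$ coming from the index of $\L_0$ in $\O^\m_0$ (equivalently $M_1M_2M_3$), under the same reduction \eqref{e:assumplattice}. The only difference is packaging: the paper shows directly that this determinant is divisible by $M_1M_2M_3$ and hence vanishes for every triple, so that $1$ together with any three elements is dependent, whereas you run the identical comparison as a contradiction through the index of the rank-$4$ sublattice spanned by a basis containing $1$.
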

\begin{proof}Let $\alpha^{(1)}$, $\alpha^{(2)}$, $\alpha^{(3)}$ be three arbitrary elements of  $\bigcup_{1 \le m \le L^2}\L(m;z, 2\delta)$. It suffices to show that $1$,  $\alpha^{(1)}$, $\alpha^{(2)}$, $\alpha^{(3)}$ are linearly dependant. For $i=1,2,3$, write $$\alpha^{(i)} = a_0^{(i)} + a^{(i)}_1i_1 + a_2^{(i)}i_2+ a_3^{(i)}i_3.$$ Let $A$ be the 3 by 3 matrix whose $(i,j)$th entry is $a_j^{(i)}$ for $1 \le i,j \le 3$. We need to show that $\det(A)=0$.  Let $\L$ have shape $(M_1, M_2, M_3)$ with $M_1M_2M_3 \asymp N$, and let the integers $\delta_{i,j}$ be as in \eqref{e:deltaji}. Therefore we have integers $u_j^{(i)}$ such that for $1 \le i,j \le 3$, we have
\begin{equation} a_j^{(i)} = u_1^{(i)}M_1 \delta_{1,j} + u_2^{(i)}M_2 \delta_{2,j} + u_3^{(i)} M_3 \delta_{3,j}.\end{equation}
Writing the above system of equations in matrix form, we see immediately that $M_1M_2M_3$ divides $\det(A)$. On the other hand, by Lemma \ref{l:compactdisc}, we have that $\det(A) \ll L^{3}$ where the implied constant depends on $\delta$, $\J$ and $\iota_\infty$. So if $L^3 \ll N$, we must have $\det(A) = 0$, as desired.
\end{proof}

Now let $\O \subseteq \O^\m$ be an order of level $N$. The above two lemmas imply that if $1\le m <CN^{1/3}$, then all elements of $\O(m;z, \delta)$ lie in a \emph{quadratic field} (since the only proper $Q$-algebras in a given quaternion algebra are $\Q$ and various embedded quadratic fields). Now the proof of Proposition \ref{t:countingnew} follows from the following lemma and the fact that $d(m) \ll_\eps m^\eps$.

\begin{lemma}Let $F \subset D$ be a quadratic field. Then for any $z \in \H$, and any positive integer $m$, we have \begin{equation}\label{eq3new} |F(m;z, \delta)| \ll_{ \delta} d(m),\end{equation} where $d(m)$ denotes the divisor function and the implied constant is independent of $F$.
\end{lemma}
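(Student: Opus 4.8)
The plan is to fix an embedding $F \hookrightarrow D$ with $F$ a quadratic field, and to reduce the count $|F(m;z,\delta)|$ to a count of algebraic integers of bounded norm in $\mathcal{O}_F$ lying near a fixed point, which is classically governed by the divisor function. First I would observe that $F(m;z,\delta) \subseteq F \cap \mathcal{O}^\m(m)$, so every $\alpha \in F(m;z,\delta)$ is an element of $F$ that is integral over $\Z$ (being in a maximal order of $D$) with $\nr(\alpha) = m$; here $\nr$ restricted to $F$ is the field norm $N_{F/\Q}$. Since the intersection $F \cap \mathcal{O}^\m$ is an order in $F$, it is contained in $\mathcal{O}_F$ with bounded index, so it suffices to bound the number of $\alpha \in \mathcal{O}_F$ with $N_{F/\Q}(\alpha) = m$ and $u(z, \iota_\infty(\alpha)z) \le \delta$.

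The key dichotomy is between real and imaginary quadratic $F$, dictated by whether $\iota_\infty(F^\times)$ contains elliptic or hyperbolic elements fixing a point of $\H$. In the (relevant) case that $F$ is embedded so that some point $w \in \H$ is fixed by $\iota_\infty(F^\times)$ — which must happen when the elements of $F(m;z,\delta)$ are to stay within bounded hyperbolic distance of a fixed $z$ while $m$ grows — the group $\iota_\infty(\mathcal{O}_F^\times)$ acts on $\H$ fixing $w$, and the compactness (via Lemma~\ref{l:compactdisc}, applied with the compact set $\J$) forces $\alpha/\sqrt m$ to lie in a fixed compact subset of $F_\infty^\times$. Concretely, writing $\alpha = x + y\omega$ in a fixed $\Z$-basis $\{1,\omega\}$ of (an order in) $\mathcal{O}_F$, the condition $N_{F/\Q}(\alpha) = m$ is a binary quadratic form equation $Q(x,y) = m$, and the distance condition confines $(x,y)$ to a bounded ellipse after rescaling by $\sqrt m$, hence confines $\alpha$ up to multiplication by the unit group. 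So $|F(m;z,\delta)|$ is bounded by a fixed constant times the number of $\mathcal{O}_F^\times$-orbits of elements of norm $m$, equivalently the number of integral ideals of $\mathcal{O}_F$ of norm $m$ (up to the finite gap between the order $F\cap\mathcal{O}^\m$ and $\mathcal{O}_F$ and the finite class group discrepancy). That count is $\sum_{\mathfrak{a} : N\mathfrak{a} = m} 1 \le d(m)$ by the standard multiplicativity of ideal-counting in quadratic fields (at worst $\sum_{ab=m} 1 = d(m)$), with implied constant depending only on the bounded index and on $\delta, \J, \iota_\infty$ but — crucially — not on $F$ itself, since there are only finitely many quadratic subfields of $D$ up to $\mathcal{O}^\m$-conjugacy that can arise, or alternatively one argues uniformly using only that $[\mathcal{O}_F : F\cap\mathcal{O}^\m]$ divides $N_\O$ which is bounded in the regime $m < CN^{1/3}$... — but to be safe I would simply absorb the (bounded, $F$-independent by a compactness/finiteness argument over the fixed $D$) index into the constant.

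I expect the main obstacle to be making the $F$-independence of the implied constant genuinely uniform: a priori $F$ ranges over infinitely many quadratic subfields of $D$ with unbounded discriminant, and both the index $[\mathcal{O}_F : F\cap\mathcal{O}^\m]$ and the regulator/fundamental-unit data of $F$ vary. The cleanest fix is to bound $|F(m;z,\delta)|$ directly by the number of integral points of a binary quadratic form $Q$ (with $\disc Q$ possibly large) lying in a fixed-size region, and to invoke the elementary bound that a binary quadratic form represents each integer $m$ in $O_\eps(m^\eps)$ ways with absolute implied constant — but since we only need $\ll d(m)$ and can use that representations of $m$ by $Q$ modulo the automorphism group biject with ideals of norm $m$, the divisor bound $d(m)$ follows with an absolute constant times the bounded index of $F\cap\mathcal{O}^\m$ in $\mathcal{O}_F$, the latter being bounded because $F\cap\mathcal{O}^\m \supseteq F \cap \O$ where $\O$ has level $< CN^{1/3}$ and $m$-elements live in a field, or more simply because $F \cap \mathcal{O}^\m$ already contains $\Z + \mathfrak{f}\mathcal{O}_F$ for a conductor $\mathfrak{f}$ dividing something bounded in terms of $D$ alone. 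Concretely, I would phrase the final step as: the map $\alpha \mapsto (\alpha)$ sends $F(m;z,\delta)$ into the set of ideals of norm $m$ in $F\cap\mathcal{O}^\m$, with fibers of size $\ll_\delta 1$ (by the compactness argument controlling units), and this ideal count is $\le d(m)$ times the bounded index $[\mathcal{O}_F:F\cap\mathcal{O}^\m]$, which over the fixed ambient $D$ takes only finitely many values as $F$ varies among subfields realized inside the fixed $\mathcal{O}^\m$ — hence the claimed bound $|F(m;z,\delta)| \ll_\delta d(m)$ with an $F$-independent constant.
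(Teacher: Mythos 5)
Your overall factorization is the same as the paper's: integral elements of norm $m$ are grouped according to the ideal they generate (at most $d(m)$ ideals of norm $m$), and the fiber over each ideal is controlled by norm-one units with small displacement. But two of the steps you gloss over are genuine gaps. First, you single out as "the relevant case" the situation where $\iota_\infty(F^\times)$ fixes a point of $\H$, i.e.\ $F$ imaginary quadratic. That is backwards: an indefinite quaternion algebra over $\Q$ contains real quadratic subfields in abundance (they supply the hyperbolic elements), for these $\iota_\infty(F\otimes\R)^\times$ fixes no point of $\H$, and $\O_F^\times$ is infinite. This is exactly the case where the fiber bound is not formal, and your proposal never treats it: for a fixed $z$ the displacement condition does confine everything to a compact region, but the number of unit translates landing in that region depends a priori on the spacing of the units, i.e.\ on the regulator of $F$, which varies with $F$ (and for $z\notin\J$ you cannot invoke Lemma \ref{l:compactdisc}, since the present lemma is stated for all $z\in\H$).

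Second, the $F$-uniformity is never actually secured, and the auxiliary claims you lean on are false: $[\O_F : F\cap\O^\m]$ is not bounded (already for $M_2(\Z)$, conjugating the standard embedding of $\Q(\sqrt{d})$ by $\mathrm{diag}(c,1)$ gives $F\cap M_2(\Z)=\Z[c\sqrt{d}]$ with arbitrary conductor $c$, and the same happens locally at split primes of $D$); there are infinitely many quadratic subfields of $D$ up to $(\O^\m)^\times$-conjugacy; and the lemma has no parameter $N$ to appeal to. Fortunately none of this is needed: since $F\cap\O^\m\subseteq\O_F$, one counts in $\O_F$ directly, where ideals of norm $m$ number at most $d(m)$ and two generators of the same ideal with equal norm differ by a norm-one unit. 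The one genuinely non-formal ingredient, which the paper imports from the proof of Lemma 6.4 of Templier's paper, is that the number of norm-one units $\kappa\in\O_F^\times$ with $u(w,\iota_\infty(\kappa)w)\le\delta$ is $\ll_\delta 1$ uniformly in $w$ and in $F$; this rests on the translation lengths of these hyperbolic elements being bounded below absolutely (the fundamental unit of any real quadratic order is at least $(1+\sqrt{5})/2$), the imaginary quadratic case being trivial because the unit group is then finite of order at most $6$. Your proposal names this uniformity as the main obstacle but does not close it; once that uniform unit bound is stated and proved, your argument reduces to the paper's proof.
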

\begin{proof} Any element of $F(m;z, \delta)$ is a product of a unit in $\O_F^\times$ of norm 1, and an element of $\O_F$ of norm $m$, with the latter taken from a fixed set of cardinality $\ll d(m)$. So we only need to consider the action of units. By the proof of Lemma 6.4 of \cite{templier-sup}, the number of norm 1 units $\kappa \in \O_F^\times$ satisfying $u(z, \iota_\infty(\kappa)z) \le \delta$ is $\ll_\delta 1$. This completes the proof. \end{proof}

\section{The main result: Statement and key applications}\label{s:global}
\subsection{Basic notations}\label{s:globalstatement}  We continue to use the notations established in Section \ref{s:quatfirst}, and introduce some new ones below. Let $\f$ denote the finite places of $\Q$ (which we identify with the set of primes) and $\infty$ the archimedean place.  We let $\A$ denote the ring of adeles over $\Q$, and $\A_\f$ the ring of finite adeles.   Given an algebraic group $H$ defined over $\Q$, a place $v$ of $\Q$, a subset of places $U$ of $\Q$, and a positive integer $M$, we denote $H_v:= H(\Q_v)$, $H_U:=\prod_{v \in U} H_v$, $H_M :=\prod_{p|M} H_p$. Given an element $g$ in $H(\Q)$ (resp., in $H(\A)$), we will use $g_p$ to denote the image of $g$ in $H_p$ (resp., the $p$-component of $g$); more generally for any set of places $U$, we let $g_U$ the image of $g$ in $H_U$.

Recall that $D$ is an indefinite quaternion division algebra over $\Q$ with reduced discriminant $d$ and that we have fixed a maximal order $O^\m$. We denote $G=D^\times$ and $G' =PD^\times = D^\times/Z$ where $Z$ denotes the center of $D^\times$.
   For each prime $p$, let $K_p=(\O^\m_p)^\times$ and let $K_p'$ denote the image of $K_p$ in $G'_p$.  Thus, for $p |d$, $K'_p$ has index 2 in the compact group $G'_p$.

 For each place $v$ that is not among the primes dividing $d$, fix an isomorphism $\iota_v: D_v  \xrightarrow {\cong} M(2,\Q_v)$. We assume that these isomorphisms are chosen such that for each finite prime $p\nmid d$, we have $\iota_p(\O_p) = M(2,\Z_p)$. It is well known that all such choices are conjugate to each other by some matrix in $\GL_2(\Z_p)$. By abuse of notation, we also use $\iota_v$ to denote the composition map $D(\Q) \rightarrow D_v \rightarrow M(2,\Q_v)$.

We fix the Haar measure on each group $G_p$ such that $\vol(K_p)=1$. We fix Haar measures on $\Q_p^\times$ such that $\vol(\Z_p^\times)=1$. This gives us resulting Haar measures on each group $G'_p$ such that $\vol(K'_p)=1$.  Fix any Haar measure on $G_\infty$, and take the Haar measure on $\R^\times$ to be equal to $\frac{dx}{|x|}$ where $dx$ is the Lebesgue measure. This gives us a Haar measure on $G'_\infty$. Take the measures on $G(\A)$ and $G'(\A)$ to be given by the product measure.  

 For each continuous function  $\phi$ on the space $G(\A)$, we let $R(g)$ denote the right regular action, given by $(R(g)\phi)(h) = \phi(hg)$. If a  continuous function  $\phi$ on $G(\A)$ satisfies that $|\phi|$ is left $Z(\A)G(\Q)$ invariant, define
$$\|\phi\|_2 = \int_{G'(\Q)\bs G'(\A)} |\phi(g)|^2 dg.$$

Note above that $G'(\Q)\bs G'(\A)$ is compact, so convergence of the integral is not an issue.

\subsection{Some facts on orders and their localizations}\label{s:orderbasic}We recall some facts we will need. Proofs of these standard facts can be found, e.g., in \cite{Voight}.

For any lattice $\L\subseteq \O^\m$ of $D$, we get a local lattice $\L_p$ of $D_p$ by localizing at each prime $p$. These collection of lattices satisfy \begin{equation}\label{e:localization} \L = \{g \in D: g_p \in \L_p \text{ for all primes }p\}.\end{equation}  Conversely, if we are given a collection of local lattices $\{\L_p\}_{p \in \f}$, such that $\L_p \subseteq \O^\m_p$ for all $p$ and $\L_p = \O^\m_p$ for all but finitely many $p$, then there exists a unique lattice $\L\subseteq \O^\m$ of $D$ defined via \eqref{e:localization} and whose localizations at primes $p$ are precisely the $\L_p$. We will refer to $\L$ as the global lattice corresponding to the collection of local lattices $\{\L_p\}_{p \in \f}$.

Two orders of $D$ (or of $D_p$) are isomorphic as algebras if and only if they are conjugate by an element of $D^\times$ (respectively, by an element of $D_p^\times$). Given an order $\O$ of $D$, we define a compact subgroup of $G(\A_\f)$ by $$K_\O =\prod_{p \in \f} \O_p^\times.$$

We define the shape and level of an order as in Section \ref{s:quatfirst}. These quantities have obvious local analogues, and so for each order $\O_p \subseteq \O^\m_p$ of $D_p$, we can define its shape $(p^{m_{1,p}}, p^{m_{2,p}}, p^{m_{3,p}})$ and level $p^{n_p}$. It is easy to see  that $\vol(\O_p^\times) \asymp p^{-n_p}$. If $\O$ is the global order of shape $(M_1, M_2, M_3)$ and level $N$ corresponding to the collection of local orders $\{\O_p\}_{p \in \f}$ with shape and level as above, then for $i=1,2,3$ we have: $M_i = \prod_{p \in \f}p^{m_{i,p}}$, $N= \prod_{p \in \f}p^{n_{p}}$. From this it follows that \begin{equation}\label{e:volumeorder}N^{-1-\eps} \ll_\eps \vol(K_\O) \ll_\eps N^{-1+\eps}.\end{equation}

For each $g \in G(\A_\f)$, and an order $\O$ of $D$, we let $g\cdot\O$ denote the order whose localization at each prime $p$ equals $g_p \O_p g_p^{-1}$. An order is said to be locally isomorphic to (in the same genus as) $\O$ if and only if it is equal to $g \cdot \O$  for some $g \in G(\A_\f)$. Note that \begin{equation}\label{e:gcond}g \cdot\O \subseteq \O^\m \quad \large{\Leftrightarrow} \quad g_p \O_p g_p^{-1} \in \O^\m_p \ \forall \ p.\end{equation} Note also that \begin{equation}\label{e:conjugation}g K_{\O}g^{-1} = K_{g\cdot\O}.\end{equation} Given $g$ satisfying \eqref{e:gcond}, the orders $\O$ and $g \cdot \O$ need not be isomorphic or have the same shape, however they always have the same level. However, note that if $k \in K_{\O^\m}$, then  $k\cdot \O$ has exactly the same shape as $\O$.

\subsection{Statement of main result}\label{s:mainresultsubsec}
Let $\pi = \otimes_v \pi_v$ be an irreducible, unitary, cuspidal automorphic
representation of $G(\A)$ where we identify $V_\pi$ with a (unique) subspace of functions on $G(\A)$, so that $\pi(g)$ coincides with $R(g)$ on that subspace. 
Given a compact open subgroup $K' = \prod_{p\in \f} K'_p$ of $K_{\O^\m}$ (where each $K'_p$ is a subgroup of $K_p$, with $K'_p = K_p$ for almost all $p$) and a finite dimensional  representation $\rho$ of $K'$, we say that an automorphic form $\phi \in V_\pi$ is of $(K', \rho)$-type if  the right-regular action of $K'$ on $\phi$ generates a representation isomorphic to $\rho$.
Observe that the existence of a form of $(K', \rho)$-type implies that the restrictions of $\rho$ and $\omega_\pi$ to the centre of $K'$ must coincide.

We can now state our main theorem.

\begin{theorem}\label{t:maingen} Let $\O \subseteq \O^\m$ be an order of $D$ of level $N$ and let $\rho$ be a finite dimensional representation of $K_\O$. Let $N_1$ be the smallest positive integer such that $N$ divides $N_1^2$. Let $\pi = \otimes_v \pi_v$ be an irreducible, unitary, cuspidal automorphic
representation of $G(\A)$. Let $\phi\in V_\pi$ be of $(K_\O, \rho)$-type and assume that $\phi$ is of minimal weight at the archimedean place, i.e., \begin{equation}\label{e:minweight}\phi\left( g\left( \iota_\infty^{-1}\mat{\cos(\theta)}{\sin(\theta)}{-\sin(\theta)}{\cos(\theta)} \right) \right)  = e^{ik \theta}\phi(g)\end{equation} where $k = 0$ if $\pi_\infty$ is principal series, and $k$ is the lowest weight if $\pi_\infty$ is discrete series. Then \begin{equation} \label{e:toprove}
\sup_{g \in G(\A)} |\phi(g) | \ll_{D, \pi_\infty, \eps} N^{\eps} \min(\max(N^{1/3}, N_1^{1/2}), N^{11/24}) \dim(\rho)^{1/2}\|\phi\|_2. \end{equation}
\end{theorem}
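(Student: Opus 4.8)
The plan is to run the amplification method for the sup-norm problem, using the adelic pre-trace formula, with the counting results of Section~\ref{s:counting} as the arithmetic input. First I would reduce to a clean geometric situation: since $\|\phi\|_\infty$ is unchanged under $G(\A)$-translation of $\phi$, I may replace $\O$ by any order $\O'$ in its genus; by the analysis promised in Section~\ref{s:orderbalanced} I choose $\O'$ locally isomorphic to $\O$ whose largest invariant factor $M_3'$ divides $N_1$. Next, exploiting that $N_\O(m;z)=N_{\O'}(m;\gamma z)$ for $(\O^\m)^\times$-conjugate orders, I move the base point $z$ into the fixed fundamental domain $\J$ at the cost of replacing $\O'$ by an $(\O^\m)^\times$-conjugate — this is the workaround for $\Gamma_\O$ not being normal in $\Gamma_{\O^\m}$, and crucially Propositions~\ref{t:counting} and~\ref{t:countingnew} are uniform over such conjugates.

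With the geometry arranged, I would set up the amplified pre-trace formula. Fix $g_0\in G(\A)$ and let $\phi$ be as in the statement. Using the measure normalizations of Section~\ref{s:globalstatement}, the spectral expansion of the automorphic kernel gives, for a suitable test function $f=\prod_v f_v$ with $f_\infty$ a point-pair invariant of width $\delta\asymp 1$ concentrated near the identity (handled by the standard archimedean analysis, with all constants allowed to depend on $\pi_\infty$) and $f_p$ supported on $K_\O$-bi-invariant functions at ramified primes, the inequality
\begin{equation*}
|(R(f)\phi)(g_0)|^2 \;\le\; \|\phi\|_2^2 \cdot \dim(\rho)\cdot \sum_{\gamma\in G(\Q)} |f(g_0^{-1}\gamma g_0)|\,.
\end{equation*}
The amplifier is built, following Blomer--Harcos--Mili\'cevi\'c~\cite{blomer-harcos-milicevic}, from Hecke operators $T_p$ and $T_{p^2}$ at primes $p$ in a dyadic range $p\sim L$ coprime to $dN$, normalized so that $R(f)\phi = \Lambda\,\phi$ with amplification factor $\Lambda\gg_\eps L^{1-\eps}$. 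Expanding the square of the amplifier, the diagonal term contributes the main term $\Lambda^2\|\phi\|_2^2$ and the off-diagonal geometric side becomes, after the reductions above, a sum of $|\L(m;z,\delta)|$ over $m$ running through products of at most two primes (or their squares) in the ranges dictated by the amplifier, with $\L=\O'$ (or a conjugate). This is exactly the quantity bounded in Propositions~\ref{t:counting} and~\ref{t:countingnew}.

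Then I would feed in the two counting bounds. From Proposition~\ref{t:counting}, using that $M_1M_2\gg N/M_3'\gg N/N_1$ (so $M_1M_2$ is as large as possible given the genus normalization) and $L\asymp$ the amplifier length, one gets a bound whose optimization in $L$ produces the exponent $\max(N^{1/3},N_1^{1/2})$ — the $N^{1/3}$ coming from the $L^2/N$ term and $N_1^{1/2}$ from the $L^{3/2}/M_1M_2$ term when $N_1$ is large. From Proposition~\ref{t:countingnew}, valid for $m\ll N^{1/3}$, one instead runs Templier's argument: elements close to the centralizer of $z$ lie in a quadratic subfield and are controlled by the divisor function, which after optimizing the amplifier length against the trivial bound yields the exponent $N^{11/24}$. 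Taking whichever of the two is smaller gives $\min(\max(N^{1/3},N_1^{1/2}),N^{11/24})$, and collecting the $\dim(\rho)^{1/2}$ and $\|\phi\|_2$ factors from the pre-trace inequality completes the proof. The main obstacle, and the part requiring the most care, is making the genus-change and base-point reduction compatible with the amplifier: one must check that translating $\phi$ and conjugating the order does not disturb the Hecke eigenvalue structure used to build $\Lambda$ (the Hecke operators at good primes are intact since those primes are unaffected), and that the uniformity of the counting bounds over $(\O^\m)^\times$-conjugates — which Proposition~\ref{t:counting} was specifically designed to provide in terms of invariant factors rather than successive minima — is genuinely enough to close the argument uniformly in $g_0$.
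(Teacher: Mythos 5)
Your proposal follows essentially the same route as the paper: reduce to a balanced order in the genus (so $M_3\mid N_1$) and move the base point into the fixed fundamental domain using uniformity of the counting bounds over $(\O^\m)^\times$-conjugates, then run the Blomer--Harcos--Mili\'cevi\'c amplifier in an adelic pre-trace inequality whose ramified test function is supported on the centre times $K_\O$ (producing the $\dim(\rho)^{1/2}$ factor via Schur orthogonality, after reducing to irreducible $\rho$), and finally feed in Propositions~\ref{t:counting} and~\ref{t:countingnew} and optimize the amplifier length to get $\max(N^{1/3},N_1^{1/2})$ and $N^{11/24}$ respectively. This is exactly the paper's argument, up to minor bookkeeping such as the explicit construction of the matrix-coefficient test function at the ramified places.
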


Theorem A of the introduction is a special case of Theorem \ref{t:maingen}, where we take $\rho$ to be a character. A key flexibility of Theorem \ref{t:maingen} comes from the fact that given $\phi$, one can optimise which order $\O$ to use depending on how much information one has about the dimensions of the representations $\rho$ generated under the action of various $K_\O$.

In certain cases, however, one may only know the dimension under the action of some $K'$ that is \emph{not} of the form $K_\O$. In such cases one can still get a bound by working with any order $\O$ containing $K'$. The following corollary makes this precise.

\begin{corollary}\label{t:maincor} Let $\phi$ be an automorphic form in the space of $\pi$ such that $\|\phi\|_2 =1$ and \eqref{e:minweight} holds. Suppose that $\phi$ is of $(K', \rho)$-type for some compact open subgroup $K'$ of $K_{\O^\m}$ and some  finite dimensional representation $\rho$ of $K'$. Let $\O \subseteq \O^\m$ be any order of $D$ of level $N$ such that $K' \subseteq K_\O$. Let $N_1$ be the  smallest positive integer such that $N$ divides $N_1^2$. Then \begin{equation}
\sup_{g \in G(\A)} |\phi(g) | \ll_{D, \pi_\infty, \eps} N^{\eps} \min(\max(N^{1/3}, N_1^{1/2}), N^{11/24}) \dim(\rho)^{1/2} [K_\O:K']^{1/2}. \end{equation}
\end{corollary}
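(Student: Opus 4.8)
The plan is to deduce the corollary from Theorem \ref{t:maingen} by enlarging the level structure from $K'$ to $K_\O$ while tracking how the dimension of the generated $K_\O$-representation grows. Write $\|\psi\|_\infty := \sup_{g \in G(\A)}|\psi(g)|$. First I would let $W \subseteq V_\pi$ be the $\C$-span of $\{\pi(k)\phi : k \in K_\O\}$, a finite-dimensional $K_\O$-submodule of $V_\pi$ by admissibility. Choosing coset representatives $K_\O = \bigsqcup_{j=1}^{m} g_j K'$ with $m = [K_\O : K']$, and letting $U$ be the $K'$-span of $\phi$ (so $\dim U = \dim \rho$, since $\phi$ is of $(K', \rho)$-type), one has $\pi(g_j k')\phi \in \pi(g_j)U$ for all $j$ and all $k' \in K'$, hence $W \subseteq \sum_{j=1}^m \pi(g_j)U$ and therefore
\[
\dim W \ \le \ m \dim\rho \ = \ [K_\O : K']\, \dim\rho .
\]

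Next I would decompose $W$ into its $K_\O$-isotypic components, $W = \bigoplus_{[\sigma]} W_{[\sigma]}$, and set $\phi_{[\sigma]} = P_{[\sigma]}\phi$, where $P_{[\sigma]}$ is the corresponding projection; thus $\phi = \sum_{[\sigma]}\phi_{[\sigma]}$. Since the inner product $\langle \psi_1,\psi_2\rangle = \int_{G'(\Q)\bs G'(\A)}\psi_1\overline{\psi_2}$ is $G(\A)$-invariant, hence $K_\O$-invariant, this decomposition is orthogonal for $\|\cdot\|_2$, so $\|\phi\|_2^2 = \sum_{[\sigma]}\|\phi_{[\sigma]}\|_2^2$; also $\phi_{[\sigma]} \in V_\pi$ because $V_\pi$ is $K_\O$-stable. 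For each $[\sigma]$ with $\phi_{[\sigma]} \ne 0$, the cyclic $K_\O$-submodule generated by $\phi_{[\sigma]}$ inside the semisimple module $W_{[\sigma]} \cong \sigma^{\oplus m_{[\sigma]}}$ is isomorphic to $\sigma^{\oplus n_{[\sigma]}}$ for some $1 \le n_{[\sigma]} \le m_{[\sigma]}$, so $\phi_{[\sigma]}$ is of $(K_\O, \rho_{[\sigma]})$-type with $\rho_{[\sigma]} = \sigma^{\oplus n_{[\sigma]}}$ and $\dim\rho_{[\sigma]} \le \dim W_{[\sigma]}$. Moreover $\SO(2)$ at the archimedean place acts on all of $W$ through the single character $\theta \mapsto e^{ik\theta}$ of \eqref{e:minweight} (it commutes with $K_\O$ and fixes $\phi$ up to that character, hence fixes the spanning set of $W$ up to it), so every $\phi_{[\sigma]}$ inherits the minimal-weight hypothesis \eqref{e:minweight}.

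Finally I would apply Theorem \ref{t:maingen} to each nonzero $\phi_{[\sigma]}$, obtaining
\[
\|\phi_{[\sigma]}\|_\infty \ \ll_{D, \pi_\infty, \eps}\ N^{\eps}\, \min(\max(N^{1/3}, N_1^{1/2}), N^{11/24})\, (\dim W_{[\sigma]})^{1/2}\, \|\phi_{[\sigma]}\|_2 ,
\]
with a constant independent of $[\sigma]$, then sum over $[\sigma]$ using the triangle inequality for $\|\cdot\|_\infty$ and Cauchy--Schwarz:
\[
\sum_{[\sigma]}(\dim W_{[\sigma]})^{1/2}\|\phi_{[\sigma]}\|_2 \ \le\ \Big(\sum_{[\sigma]}\dim W_{[\sigma]}\Big)^{1/2}\Big(\sum_{[\sigma]}\|\phi_{[\sigma]}\|_2^2\Big)^{1/2} \ = \ (\dim W)^{1/2}\|\phi\|_2 .
\]
Combining this with $\dim W \le [K_\O:K']\dim\rho$ and $\|\phi\|_2 = 1$ gives the claimed bound. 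The implied constant stays uniform because the Cauchy--Schwarz step costs nothing and the constant from Theorem \ref{t:maingen} does not vary with $[\sigma]$. The only delicate points are bookkeeping: that the isotypic projections are simultaneously $\|\cdot\|_2$-orthogonal and harmless for the archimedean weight, and that a cyclic vector generates a submodule of dimension at most that of its isotypic component; I do not expect a real obstacle, so the proof should be short.
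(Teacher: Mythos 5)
Your proof is correct and follows essentially the same route as the paper: bound the dimension of the $K_\O$-representation generated by $\phi$ by $[K_\O:K']\dim\rho$ and then apply Theorem \ref{t:maingen}. The only difference is that your isotypic decomposition and Cauchy--Schwarz step are redundant, since Theorem \ref{t:maingen} is stated for (possibly reducible) finite-dimensional $\rho$ and its proof already contains exactly this orthogonal-decomposition reduction (assumption \eqref{e:assump2}), so one may invoke it directly for the representation generated by $\phi$ under $K_\O$, as the paper does.
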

\begin{proof}Consider the representation $\rho'$ generated by $\phi$ under the action of $K_\O$. Then from elementary considerations, $$\dim(\rho') \le \dim(\rho) [K_\O:K'].$$ Now apply Theorem \ref{t:maingen} using the fact that $\phi$ is of $(K_\O, \rho')$-type.
\end{proof}

\begin{remark}\label{rem:local} Suppose that $\phi \in V_\pi$ is an automorphic form satisfying \eqref{e:minweight} and suppose that $K'$ is a compact open subgroup that acts on $\phi$ by a \emph{character}. Taking $\O = \O^\m$ in Corollary \ref{t:maincor} then gives us the \emph{trivial bound}:
$$\|\phi\|_\infty \ll_{D, \pi_\infty, \eps} [K_{\O^\m}:K']^{1/2}\|\phi\|_2,$$ which is a mild extension of \eqref{e:trivial}.

On the other hand, suppose that $\pi$ has trivial central character and is spherical at all $p|d$. Denote the conductor of $\pi$ by $C$ and let $C_1$ be the smallest integer such that $C|C_1^2$. Suppose that $\phi \in V_\pi$ has the property that some $G(\A_\f)$ translate of it is fixed by the ``principal congruence subgroup" $K_{\O^\m(C_1)}$ (see \eqref{d:congchar} for the definition). Then by the results of \cite[p. 96-97]{Silberger}, the action of $K_\O$ on $\phi$ generates a representation of dimension $\le C_1 \prod_{p|C}(1+p^{-1})$, and so by Corollary \ref{t:maincor}:
\begin{equation}\label{e:localrefined}\|\phi \|_\infty \ll_{D, \pi_\infty} C_1^{1/2} \big(\prod_{p|C}(1+p^{-1})\big)^{1/2}.\end{equation} This is a refinement of the local bound \eqref{e:localgen} for such $\phi$. We note here that the class of $\phi$ having the property mentioned above is quite broad and includes the usual newforms, the automorphic forms of minimal type, and the $p$-adic microlocal lifts, as well as $G(\A_\f)$-translates of all these.\end{remark}

\subsection{The  case of automorphic forms of minimal type}
We now explain how Theorem \ref{t:maingen} implies Theorem B. In fact, we will provide a more general version of Theorem B in Theorem \ref{t:main} below. Before stating the theorem, let us briefly recall the concept of a minimal vector.  Let $p$ be an odd prime and $\pi_p$ be a twist-minimal supercuspidal representation of $\GL_2(\Q_p)$ of conductor $p^{c_p}$. (The twist-minimal condition is automatic whenever $\pi_p$ has trivial central character, or more generally whenever $c_p \neq 2 m_p$ where $p^{m_p}$ is the conductor of the central character of $\pi_p$.) Note that as $\pi_p$ is supercuspidal, we must have $c_p \ge 2$.  We define integers $n_p$, $d_p$ as follows depending on the congruence class of $c_p$ mod 4:

\begin{enumerate}
\item If $c_p \equiv 0 \pmod{4}$, then $n_p = \frac{c_p}2$, $d_p=0$.

\item \label{case2} If $c_p \equiv 2 \pmod{4}$, then $n_p = \frac{c_p}2 - 1$, and $d_p=1.$

\item \label{case3} If $c_p \equiv 1 \pmod{2}$, then $n_p = \frac{c_p}2 + \frac12$, $d_p=0$.

\end{enumerate}

The concept of a minimal vector was first introduced in \cite{HNS} where we focussed only on the first case above, i.e., the case $c_p \equiv 0 \pmod{4}$. In this case, the minimal vector is a unique (up to multiples)\textbf{} vector $\phi_p$ in the space of $\pi_p$ that can be described as follows: Let $\alpha_p \in \Z_p^\times$  be a non-square. Define the order $\tilde{O}_p$ of $\GL_2(\Z_p)$ via $$\tilde{O}_p = \Z_p  + \Z_p \mat{0}{1}{\alpha_p}{0} +  p^{c_p/4}M_2(\Z_p)$$ and let $\O_p = \iota_p^{-1}(\tilde{O}_p)$ be the corresponding order of $D_p$. Then there exists a character $\chi_{\pi_p}$ of $\O_p^\times$ (defined in Definition 2.10 of \cite{HNS}) such that $\pi_p(k_p) \phi_p = \chi_{\pi_p}(k_p) \phi_p,$ for all $k_p \in  \O_p^\times$. This property defines the minimal vector uniquely up to multiples (the definition depends on the isomorphism $\iota_p$ and the element $\alpha_p$ but a different choice of these simply corresponds to a $\O_p^\times$ translate of $\phi_p$).

In a recent work \cite{HN-minimal}, Hu and Nelson extended the concept of a minimal vector to cases \eqref{case2} and \eqref{case3} above, so that now there is a well-defined notion of a minimal vector for all twist-minimal supercuspidal representations of $\GL_2(\Q_p)$ for $p$ odd. We remark here that the twist-minimality is merely for convenience since the minimal vector in the general case is defined in terms of the twist-minimal case. In principle, the case of $p=2$ can also be dealt with similarly but in this case the computations get more technical and these have not been performed so far.
The analogous vectors for the case of principal series representations has also been dealt with in separate work of Nelson \cite{nelson-padic-que}; in this case the relevant vectors are known as $p$-adic microlocal lifts.

Going back to the case of a twist-minimal supercuspidal representation $\pi_p$ of $\GL_2(\Q_p)$ for $p$ odd, we define a ``Type 2 minimal vector" as in \cite{HN-minimal}. If $c_p \not \equiv 2 \pmod{4}$, then the relevant space is one dimensional and so any minimal vector is automatically of Type 2. In the case $c_p \equiv 2 \pmod{4}$, the space of minimal vectors is $p$-dimensional (except for the case $c_p=2$, when the space is $p-1$ dimensional) and has a basis consisting of Type 2 minimal vectors.

A Type 2 minimal vector $\phi_p$ in the space of $\pi_p$ has the property that there exists an order $\O_p \in \O^\m_p$ of level $p^{n_p}$ such that the action of $\O_p^\times$ on $\phi_p$ generates an irreducible representation $\rho_p$ of $\O_p^\times$ with dimension $p^{d_p}$, except in the special case $c_p=2$, in which the representation has dimension $p-1$. Now Theorem \ref{t:maingen} leads to the following theorem.

\begin{theorem}\label{t:main}Let $\pi = \otimes_v \pi_v$ be an irreducible, unitary, cuspidal automorphic
representation of $G(\A)$. Assume that \begin{itemize}
\item If $p|d$, then $\pi_p$ has a (non-zero) vector fixed by $K_p$. (This implies that $\pi_p$ is one-dimensional for each $p|d$.)

\item If $p \nmid d$, and the representation $\pi_p$ of $G_p \cong \GL_2(\Q_p)$ is ramified, then $p$ is odd and $\pi_p$ is a twist-minimal supercuspidal representation with conductor $p^{c_p}$.
\end{itemize} Define $C =\prod_p p^{c_p}$ with the product taken over the ramified primes,  so that $C$ is the (``away from $d$" part of the) conductor of $\pi$ and equal to an odd squarefull integer. Let $\phi=\otimes_v \phi_v$ be a non-zero automorphic form in the space of $\pi$ such that $\phi_p$ is $K_p$ fixed for all $p \nmid C$, $\phi_\infty$ is a vector of smallest non-negative weight, and $\phi_p$ is a Type 2 minimal vector  for each $p|C$. Then we have \begin{equation} \label{e:mintheoprove}
\sup_{g \in G(\A)} |\phi(g) | \ll_{D, \pi_\infty, \eps} C_1^{1/3+\eps} \|\phi\|_2 \prod_{\substack{p|C\\ c_p \equiv 2\bmod{4}}} p^{1/6}. \end{equation}
\end{theorem}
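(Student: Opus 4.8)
The plan is to deduce Theorem~\ref{t:main} directly from Theorem~\ref{t:maingen} (equivalently its Corollary~\ref{t:maincor}) by choosing an appropriate order $\O$ and tracking the dimension of the $K_\O$-representation generated by $\phi$. First I would set up the local orders: for each prime $p \mid C$ with $\pi_p$ twist-minimal supercuspidal of conductor $p^{c_p}$, let $\O_p \subseteq \O^\m_p$ be the order of level $p^{n_p}$ (with $n_p$ as in the three cases above) such that $\O_p^\times$ acts on the Type~2 minimal vector $\phi_p$ through an irreducible representation $\rho_p$ of dimension $p^{d_p}$ (or $p-1$ in the exceptional case $c_p=2$). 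For $p \nmid C$ take $\O_p = \O^\m_p$. Let $\O$ be the global order with these localizations, and let $\rho = \otimes_p \rho_p$; then $\phi$ is of $(K_\O, \rho)$-type, with $\dim(\rho) = \prod_{p\mid C} p^{d_p}$ (up to the $c_p=2$ modification), and the level is $N = [\O^\m:\O] = \prod_{p\mid C} p^{n_p}$.

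Next I would carry out the bookkeeping relating $N$, $N_1$, $\dim(\rho)$ and the quantities $C$, $C_1$ appearing in the statement. Case~1 ($c_p\equiv 0 \bmod 4$): $n_p = c_p/2$, $d_p = 0$. Case~2 ($c_p\equiv 2\bmod 4$): $n_p = c_p/2 - 1$, $d_p = 1$. Case~3 ($c_p$ odd): $n_p = (c_p+1)/2$, $d_p = 0$. In all three cases $p^{n_p}$ is very close to $p^{c_p/2}$, so $N$ is squarefull (each $c_p \geq 2$ forces $n_p \geq 1$, and one checks $n_p \geq 2$ except when $c_p \in\{2,3\}$, but in any case $N$ squarefull is all we need to invoke the Corollary to Theorem~A). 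Hence the Corollary applies and gives $\|\phi\|_\infty \ll_{D,\pi_\infty,\eps} N^{1/3+\eps}\dim(\rho)^{1/2}\|\phi\|_2$. It then remains to show $N^{1/3}\dim(\rho)^{1/2} \ll C_1^{1/3+\eps}\prod_{p\mid C,\,c_p\equiv 2}p^{1/6}$. Prime by prime: $C_1 = \prod_p p^{\lceil c_p/2\rceil}$, so $C_1^{1/3} = \prod_p p^{\lceil c_p/2\rceil /3}$. In Cases~1 and~3 one has $n_p^{1/3}\cdot 1 = p^{n_p/3}$ with $n_p = \lceil c_p/2\rceil$, matching $C_1^{1/3}$ exactly; in Case~2, $p^{n_p/3}p^{d_p/2} = p^{(c_p/2-1)/3 + 1/2} = p^{c_p/6 + 1/3} = p^{\lceil c_p/2\rceil/3}\cdot p^{1/6}$ since $\lceil c_p/2\rceil = c_p/2$ here, which is exactly $C_1^{1/3}$ at $p$ times the extra factor $p^{1/6}$. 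This verifies \eqref{e:mintheoprove}, absorbing the finitely many exceptional $c_p = 2$ factors (where $\dim \rho_p = p-1 < p$) and all $N^\eps$ losses into the implied constant and the $\eps$.

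The bulk of the argument is therefore the exponent arithmetic above, which is routine once the local data is recorded; the one genuine input that must be cited rather than reproved is the local statement that a Type~2 minimal vector $\phi_p$ in a twist-minimal supercuspidal $\pi_p$ is an eigenvector (in the irreducible, $p^{d_p}$-dimensional sense) for the unit group of the specific order $\O_p$ of level $p^{n_p}$ — this is precisely the content imported from \cite{HNS} (Case~1) and \cite{HN-minimal} (Cases~2 and~3), as summarized just before the theorem statement. I expect the main subtlety, rather than obstacle, to be confirming that the global order $\O$ one builds by gluing the $\O_p$ is genuinely an order (closed under multiplication, contains $1$) — but this is immediate from the discussion in Section~\ref{s:orderbasic}, since each $\O_p$ is a local order inside $\O^\m_p$ and equals $\O^\m_p$ for almost all $p$, so the global lattice defined by \eqref{e:localization} is the required order with level $\prod_p p^{n_p}$ and shape the product of the local shapes. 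One should also remark, as in the paragraph following Theorem~B, that removing the restriction $C = (\text{odd})^4$ is exactly what passing from \cite{HNS} to \cite{HN-minimal} buys us, so no separate argument is needed for that generalization.
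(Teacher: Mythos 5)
Your proposal is correct and follows essentially the same route as the paper's proof: glue the local orders $\O_p$ of level $p^{n_p}$ (with $\O_p=\O^\m_p$ off $C$) into a global order, note that $\phi$ is of $(K_\O,\rho)$-type with $\dim(\rho)=\prod p^{d_p}$ (adjusted at $c_p=2$), and apply Theorem~\ref{t:maingen}, with your exponent bookkeeping (including the check that $N$ is squarefull, so $\max(N^{1/3},N_1^{1/2})=N^{1/3}$) simply making explicit what the paper leaves as ``follows immediately.'' The only blemish is a harmless intermediate slip in Case~2, where $p^{(c_p/2-1)/3+1/2}$ equals $p^{c_p/6+1/6}$, not $p^{c_p/6+1/3}$; your final identification with $C_1^{1/3}p^{1/6}$ is nevertheless correct.
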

\begin{proof}For each $p|C$, we have a local order $\O_p$ of level $p^{n_p}$ such that the action of $\O_p^\times$ on $\phi_p$ generates a representation $\rho_p$ of dimension $p^{d_p}$, except if $c_p=2$, when the dimension is $p^{d_p}\left(1-\frac1p\right)$. Let $\O$ be the corresponding global order (put $\O_p = \O^\m_p$ if $p \nmid C$) and $\rho = \otimes_p\rho_p$ the corresponding representation of $K_\O$. Then the dimension of $\rho$ is $$r:=\prod_{\substack{p|C\\ c_p \equiv 2\bmod{4}}} p \prod_{c_p=2} (1-\frac1p)$$ and the level of $\O$ is $\prod_{p|C}p^{n_p}$.  Now the result follows immediately from Theorem \ref{t:maingen}.
\end{proof}

Theorem \ref{t:main} improves upon the local bound \eqref{e:localgen} except when $\sqrt{C}$ is a squarefree integer (in which case we recover the local bound).
\subsection{Bounds for $p$-adic microlocal lifts and for newforms}\label{s:otherapps}In fact, Theorem \ref{t:maingen} implies sub-local bounds in the level aspect for certain families of automorphic forms in addition to the ones of minimal type described above. For example, consider the case where $\pi$ has trivial central character and whose (away-from-$d$) conductor $C$ equals $N^4$ where $N=\prod_p p^{n_p}$ is an odd integer.  For two characters $\chi_1$, $\chi_2$ on $\Q_p^\times$, let $\chi_1
\boxplus \chi_2$ denote the principal series representation on
$\GL_2(\Q_p)$
that is unitarily induced from the corresponding representation of its Borel subgroup. Now assume that for each $p|N$, $\pi_p$ is of the form $\chi_p \boxplus \chi_p^{-1}$ with $a(\chi_p)=2n_p$.  Let $\phi_p \in V_{\pi_p}$ at these primes $p$ correspond to \emph{$p$-adic microlocal lifts} in the sense of \cite{nelson-padic-que}.

Consider the group $$K_p^*(n_p) =\left\{\mat{a}{b}{c}{d} \in \GL_2(\Z_p),  b \equiv  c \equiv 0 \bmod p^{n_p}\right\}.$$ Then, by \cite[Lemma 22]{nelson-padic-que}, we see that a $p$-adic microlocal lift $\phi_p \in V_{\pi_p}$ is characterized by the property that $\pi_p(\iota_p^{-1}(k))\phi_p = \chi_p^{\pm1}(ab^{-1})\phi_p$ for all $k = \mat{a}{b}{c}{d} \in   K_p^*(n_p)$. On the other hand, it is easy to check that $\iota_p^{-1}(K_p^*(n_p)) =\O_p^\times$ where  $\O_p$ is an order of level $p^{2n_p}$.  Therefore (by identical arguments as in the proof of Theorem \ref{t:main}), we obtain  \begin{equation}\label{e:microlocal}\|\phi \|_\infty \ll_{D, \pi_\infty, \eps} C^{\frac16+ \eps},\end{equation} which is a sub-local bound  for sup-norms of ``automorphic forms of $p$-adic microlocal type".

 \begin{remark} Minimal vectors and $p$-adic microlocal lifts are analogues of each other, the only difference being that the former live in supercuspidal representations and the latter live in principal series representations. Both these classes of vectors may be viewed as special cases (in the $p$-adic setting) of the more general class of ``localized" vectors. See also \cite{ven-nelson} for a discussion of localized vectors in the archimedean setting, where they are termed ``microlocalized" vectors. \end{remark}

Finally, we discuss what sort of bound Theorem \ref{t:maingen} gives us for \emph{newforms}.  We obtain the following general result:

\begin{theorem}\label{t:newforms}Let $\pi = \otimes_v \pi_v$ be an irreducible, unitary, cuspidal automorphic
representation of $G(\A)$ with conductor $C$. Let $M$ be the conductor of the central character of $\pi$. Let $\phi$ in the space of $\pi$ be a global newform, i.e., $\phi = \otimes_v \phi_v$ with $\phi_p$ spherical if $p \nmid C$, $\phi_p$ equal to the local newvector for $p|C$, and $\phi_\infty$ a vector of smallest non-negative weight. Then we have
\begin{equation}
\sup_{g \in G(\A)} |\phi(g) | \ll_{D, \pi_\infty, \eps} C^{\eps} \min(\max(C^{\frac13}, C_1^{\frac12}),  \  (C_1^2/C)^{-\frac1{24}}\mathrm{lcm}(M, C_1)^{\frac12})\|\phi\|_2. \end{equation}
\end{theorem}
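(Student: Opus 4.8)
The plan is to combine a direct application of Theorem~\ref{t:maingen} to the Eichler order with a refined run of the amplification argument that underlies that theorem. First I would reduce to the Eichler order: by the local theory of newvectors (Casselman; Deligne), for each prime $p\mid C$ the local newvector $\phi_p$ spans the one-dimensional space of vectors in $\pi_p$ that transform under $K_0(\mathfrak p^{c_p})=(\O_p)^\times$ -- where $\O_p\subseteq\O^\m_p$ is the Eichler order of level $p^{c_p}$ -- by the character $\iota_p^{-1}\mat{a}{b}{c}{d}\mapsto\omega_{\pi_p}(d)$, while for $p\mid d$ the component $\phi_p$ is fixed by $K_p$. Hence, letting $\O\subseteq\O^\m$ be the global Eichler order of level $N=C$ and $\chi=\prod_p\chi_p$ the associated character of $K_\O$, the newform $\phi$ is of $(K_\O,\chi)$-type with $\dim(\chi)=1$. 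Theorem~\ref{t:maingen} then applies directly and gives $\|\phi\|_\infty\ll_{D,\pi_\infty,\eps}C^\eps\min(\max(C^{1/3},C_1^{1/2}),C^{11/24})\|\phi\|_2$, which already produces the first term $\max(C^{1/3},C_1^{1/2})$ inside the minimum. The remaining task is to replace the crude second term $C^{11/24}$ by $(C_1^2/C)^{-1/24}\,\mathrm{lcm}(M,C_1)^{1/2}$.

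For that I would re-enter the amplification argument behind Theorem~\ref{t:maingen} rather than quoting it as a black box, run it on the Eichler order $\O$ of level $C$, amplifying over primes $\ell\in[L,2L]$ with $\ell\nmid dC$ via the Blomer--Harcos--Mili\'cevi\'c amplifier \cite{blomer-harcos-milicevic} with $L$ to be optimised, and feed in two refinements. On the spectral side, the ``identity'' contribution should be bounded not by the local bound attached to the level $C$, but by the sharper local bound $\mathrm{lcm}(M,C_1)^{1/2+\eps}\|\phi\|_2$: by Atkin--Lehner theory a suitable $G(\A_\f)$-translate of $\phi$ is fixed at each prime $p\mid C$ by the principal congruence subgroup $K(\mathfrak p^{\max(\lceil c_p/2\rceil,\,m_p)})$, where $p^{m_p}$ is the conductor of $\omega_{\pi_p}$, and $\prod_{p\mid C}p^{\max(\lceil c_p/2\rceil,m_p)}=\mathrm{lcm}(M,C_1)$, so by the dimension estimate of \cite[p.~96--97]{Silberger} used in Remark~\ref{rem:local} the $K_{\O^\m}$-span of $\phi$ has dimension $\ll_\eps\mathrm{lcm}(M,C_1)^{1+\eps}$. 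On the geometric side, after the reduction of Section~\ref{s:introkeyideas} to a fixed fundamental domain for $\iota_\infty(\O^\m(1))$, the off-diagonal Hecke terms (of determinants $\ell_1\ell_2$, $\ell_1\ell_2^2$, $\ell_1^2\ell_2^2$) are counted by Proposition~\ref{t:countingnew} in the norm range $\ll C^{1/3}$ and by Proposition~\ref{t:counting}, for the unbalanced shape $(1,1,C)$ of the Eichler order, in the remaining range. Optimising $L$ against these estimates then turns the local bound $\mathrm{lcm}(M,C_1)^{1/2}$ into the sub-local bound $(C_1^2/C)^{-1/24}\,\mathrm{lcm}(M,C_1)^{1/2}$. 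As sanity checks: when $C$ is squarefree one has $C_1=\mathrm{lcm}(M,C_1)=C$ and this collapses to Templier's $C^{11/24}$; when $C$ is a perfect square with $M\mid C_1$, the factor $(C_1^2/C)^{-1/24}$ is $1$ and one recovers the local bound $C_1^{1/2}=C^{1/4}$, consistent with there being no sub-local newform bound at perfect-square conductors.

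The main obstacle is this optimisation together with the interaction of the two refinements: one must run the $\chi$-twisted adelic pre-trace formula on the Eichler order of level $C$ while arranging that (a) the identity term genuinely sees only the ``effective level'' $\mathrm{lcm}(M,C_1)$ -- via the Silberger-type dimension bound rather than the ambient level $C$ -- and (b) the off-diagonal contribution, estimated through the shape-$(1,1,C)$ counting of Propositions~\ref{t:counting}--\ref{t:countingnew}, does not exceed the identity term at the optimal amplifier length. Making the exponents come out so that the amplification saving is exactly $(C_1^2/C)^{-1/24}$, and not some weaker power, is the delicate point; everything else is routine within the amplification framework already developed for Theorem~\ref{t:maingen}, using that $C_1$, $M$ and $\mathrm{lcm}(M,C_1)$ behave multiplicatively over the primes dividing $C$.
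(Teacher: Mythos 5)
Your first step---obtaining the term $\max(C^{1/3},C_1^{1/2})$ by applying Theorem~\ref{t:maingen} to the Eichler order of level $C$, on which the newvector transforms by a character---is exactly what the paper does. The gap is in your derivation of the second term. You propose to run the amplification on the Eichler order of level $C$ while bounding the identity contribution by the ``effective level'' $\mathrm{lcm}(M,C_1)$ (via the $K_{\O^\m}$-span dimension from \cite{Silberger}) and simultaneously counting the off-diagonal terms with the shape-$(1,1,C)$ lattice. But in the pre-trace inequality both the spectral and the geometric side are governed by one and the same test function at the primes dividing $C$: if $\kappa_S$ is supported on $\Q_S^\times$ times the units of the level-$C$ Eichler order, the prefactor is $\dim(\rho)\vol(K_\O)^{-1}\asymp C$, not $\mathrm{lcm}(M,C_1)$; whereas to invoke the Silberger-type dimension bound you must take the matrix coefficient on all of $K_{\O^\m}$, and then the geometric sum runs over elements of the \emph{maximal} order, for which Propositions~\ref{t:counting} and~\ref{t:countingnew} give no amplification saving at all (the threshold in Proposition~\ref{t:countingnew} scales with the level of the order in which one counts). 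Newvector matrix coefficients do not decay off the Eichler order at a usable rate---this is precisely the obstruction discussed in Section~\ref{s:introadd} and deferred to the sequel---so the hybrid ``identity sees $\mathrm{lcm}(M,C_1)$, geometry sees level $C$'' cannot be realised within this paper's framework. Two further symptoms that the bookkeeping is off: if your setup were consistent it would output $C^{-1/24}\mathrm{lcm}(M,C_1)^{1/2}$ (amplifier length $\asymp C^{1/12}$), which is \emph{stronger} than the stated theorem rather than equal to it; and Proposition~\ref{t:counting} with $M_1=M_2=1$ yields nothing beyond the trivial count, so the ``remaining range'' part of your geometric estimate is vacuous.

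The paper's actual route to the second term is a second, purely black-box application of Theorem~\ref{t:maingen} with a \emph{different order}: take $\O$ to be the Eichler order of the squarefree level $C'=C_1^2/C$. By \cite[Sec.~2.7]{saha-sup-level-hybrid}, a suitable right-translate of the newform (which has the same sup-norm) generates under $K_\O$ a representation of dimension $\ll \mathrm{lcm}(M,C_1)/C'$, and since $C'$ is squarefree the theorem gives the exponent $(C')^{11/24}$; hence the bound $(C')^{11/24}\bigl(\mathrm{lcm}(M,C_1)/C'\bigr)^{1/2}=(C')^{-1/24}\mathrm{lcm}(M,C_1)^{1/2}$. To repair your argument, replace the hybrid re-run of the amplification by this change of order.
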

\begin{proof}For any integer $N$, let $\O_0(N)$ denote the Eichler order of level $N$. We first apply Theorem \ref{t:maingen} with $\O =\O_0(C)$. Since $\phi$ transforms by character under the action of $K_{\O_0(C)}$, we obtain that $$\sup_{g \in G(\A)} |\phi(g) | \ll_{D, \pi_\infty, \eps} C^{\eps} \max(C^{1/3}, C_1^{1/2}).$$
Next, we apply the theorem with $\O=\O_0(C')$ where $C' = C_1^2/C$ is the squarefree integer obtained by taking the product of all primes which divide $C$ to an odd power. Then, it was shown in \cite[Sec. 2.7]{saha-sup-level-hybrid} that the action of $K_\O$ on (a suitable right-translate of) $\phi$ generates a representation of dimension $\ll \frac{\mathrm{lcm}(M, C_1)}{C'}$. Now applying Theorem \ref{t:maingen} (and using the fact that right-translating does not change the sup-norm), we get that $$\sup_{g \in G(\A)} |\phi(g) | \ll_{D, \pi_\infty, \eps} C^{\eps}  (C')^{-1/24} \mathrm{lcm}(M, C_1)^{1/2}).$$This completes the proof.
\end{proof}
Theorem \ref{t:newforms} generalizes several previously known bounds for the supnorms of newforms on $G(\A)$, and its proof clearly demonstrates the flexibility of Theorem \ref{t:maingen}.  Note however, that when the central character of $\pi$ is trivial, then Theorem \ref{t:newforms} reduces to \begin{equation}\label{e:newforms}\|\phi \|_\infty \ll_{D, \pi_\infty, \eps} C_1^{\frac12+ \eps} (C')^{-1/24} \end{equation} where $C' = C_1^2/C$ is the squarefree integer obtained by taking the product of all primes which divide $C$ to an odd power. This bound \eqref{e:newforms} \emph{fails} to improve upon the local bound \eqref{e:localgen} for any family of newforms $\phi \in V_\pi$ for which $\frac{C'}{C} \rightarrow 0.$

In particular, a key outstanding case concerns the problem of beating the local bound for newforms of trivial central character in the depth aspect $C=p^n, \ n \rightarrow \infty$ where $p$ is a fixed prime. This case will be treated in a sequel to this paper written with Y. Hu, where we will introduce a new technique in the setting described above (under an additional assumption that $p$ is odd) which will enable us to replace the exponent $1/2$ in \eqref{e:localgen} by the exponent $5/24$ in this particular aspect.

\subsection{Preparations for the proof}
 We now begin the proof of Theorem \ref{t:maingen}. The main part of the proof will be completed in Section \ref{s:amplglobal} (which will crucially rely on the counting results from Section \ref{s:counting}). In this subsection, we make a few simple but key observations, which will allow us to impose additional hypotheses without any loss of generality.

First of all, the property of being  of minimal weight at the archimedean place, strictly speaking, depends on the local isomorphism $\iota_\infty$ which has been fixed by us. However, a different choice of $\iota_\infty$ simply corresponds to replacing $\phi$ by a $G_\infty$-translate of it, and (by definition) the sup-norm of this translated form coincides with the sup-norm of $\phi$. Therefore, fixing $\iota_\infty$ does not change the sup-norm. Now we fix, once and for all, a compact fundamental domain $\J$ for the action of $$\Gamma_{\O^\m} = \{ \gamma \in \iota_\infty(\O^\m), \ \det(\gamma)=1\}$$ on $\H$. Any element of $G(\A)$ can be left-multiplied by a suitable element of $Z(\A)G(\Q)$ and right-multiplied by a suitable element of $K_{\O^\m}$ to get an element $g_\infty \in G_\infty$ such that $\det(\iota_\infty(g_\infty))>0$ and $\iota_\infty(g_\infty)(i)$ lies in $\J$. Since $|\phi(g)|$ is $Z(\A)G(\Q)$ invariant, we may assume, for the purposes of proving \eqref{e:toprove}, that $g = \prod_v g_v$ satisfies \begin{equation}\label{e:toprove2}
g_p \in K_p  \text{ for all } p \in \f, \quad \det(\iota_\infty(g_\infty))>0, \quad \text{ and }  \iota_\infty(g_\infty)(i) \in \J,
\end{equation} where $\J$ is our fixed compact set.

Secondly, suppose that $\O'\subseteq \O^\m$ is any order in the same genus as $\O$.  So there exists $h \in G(\A_\f)$ such that $h\cdot \O = \O'$ (recall the notations from Section \ref{s:orderbasic}). Clearly $\O'$ has the same level as $\O$. Let the finite dimensional  representation $\rho'$ of $K_{\O'}$ be defined via $\rho'(k) = \rho(h^{-1}k h)$ (So $\rho$ and $\rho'$ are isomorphic). Now define the automorphic form $\phi' = \pi(h) \phi$. Then $\phi'$ is of $(K_{\O'}, \rho')$ type and of minimal weight at the archimedean place. Further, it has the same sup-norm as $\phi$, being a translate. So it suffices to prove the Theorem for $\phi'$ (which allows us to change the order from $\O$ to $\O'$). However, a very useful algebraic fact, that we will prove below in Section \ref{s:orderbalanced}, is that each genus of orders contains an order with shape $(M_1, M_2, M_3)$ and level $N$ such that $M_3|N_1$. So, for the purpose of proving Theorem \ref{t:maingen}, we can and will assume the following:
\begin{equation}\label{e:assump1}\text{If }(M_1, M_2, M_3)\text{ is the shape of }\O,\text{ then }M_3\text{ divides }N_1. \end{equation}

Thirdly, we may assume, for the purpose of proving Theorem \ref{t:maingen}, that
\begin{equation}\label{e:assump2}\rho \text{ is irreducible. } \end{equation} Indeed, suppose we have proved the Theorem under \eqref{e:assump2}. Now for the general case, we simply write $\phi$ as an orthogonal sum of automorphic forms $\phi_i$, each of which generates an irreducible representation $\rho_i$ under the action of $K_\O$. Now apply the already proved result to each $\phi_i$, follow it by the triangle inequality and then Cauchy Schwartz, to obtain the desired result for $\phi$ (using the facts that $\dim(\rho) = \sum_i \dim(\rho_i)$ and $\|\phi\|_2^2 = \sum_i \|\phi_i\|_2^2$).

\subsection{A result on balanced representatives for orders}\label{s:orderbalanced}
\begin{definition} Given a pair of lattices $\L_1$, $\L_2$ in $D$ such that $\L_1 \subseteq  \L_2$, \begin{enumerate}
\item The invariant factors of $\L_1$ in $\L_2$ are the unique quadruple of positive integers $(a_1,a_2,a_3,a_4)$ such that $a_1|a_2|a_3|a_4$ and $$ \L_2 /\L_1 \simeq (\Z / a_1\Z) \times (\Z / a_2\Z) \times (\Z / a_3\Z) \times (\Z / a_4\Z).$$

  \item $\L_1$ is balanced in $\L_2$ if the  invariant factors $(a_1,a_2,a_3,a_4)$ have the following property: If $t_1$ denotes the smallest integer such that $a_1a_2a_3a_4$ divides $t_1^2$, then $a_4$ divides  $t_1$.
      \end{enumerate}
\end{definition}
Note that if $\L_1$ and $\L_2$ are orders, then the smallest invariant factor $a_1$ equals 1.
\begin{remark}  Let $\O \subseteq \O^\m$ be an order with shape $(M_1, M_2, M_3)$ and level $N$, and let $N_1$ be the smallest integer such that $N|N_1^2$. Now, suppose that $\O$ is balanced in $\O^\m$. Then by Remark \ref{rem:shapeinv}, we see that $M_3|N_1$. In particular assumption \eqref{e:assump1} holds.
\end{remark}

The object of this subsection is to prove the following result, which was used in the previous subsection to show that we can always assume \eqref{e:assump1} without any loss of generality.
\begin{proposition}\label{p:ordergenusglobal}Let $\O$ be an order in $D$.  Then there exists $g \in G(\A_\f)$ such that $g\cdot \O $ is balanced in $\O^\m$.
\end{proposition}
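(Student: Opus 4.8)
The statement is local in nature, so the plan is to reduce it to a problem about orders in $D_p$ for each prime $p$ separately, and then to settle that problem using the classification of local (Gorenstein) orders. Since $\O_p = \O^\m_p$ for all but finitely many $p$, and since the invariant factors of $\O$ in $\O^\m$ are, prime by prime, the products of the invariant factors of $\O_p$ in $\O^\m_p$, the property of being balanced is itself local: $\O$ is balanced in $\O^\m$ if and only if $\O_p$ is balanced in $\O^\m_p$ for every $p$, where balancedness of a local order is defined through its invariant factors in $\O^\m_p$ exactly as in the Definition. Hence it suffices, for each of the finitely many primes $p$ with $\O_p \neq \O^\m_p$, to produce $g_p \in D_p^\times$ with $g_p \O_p g_p^{-1} \subseteq \O^\m_p$ and $g_p \O_p g_p^{-1}$ balanced in $\O^\m_p$; putting $g_p = 1$ at the remaining primes then yields the required $g \in G(\A_\f)$ via \eqref{e:localization}. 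So from now on fix such a $p$.

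The second step is a reduction to Gorenstein orders. By the standard theory of Gorenstein closures (Voight, Brzezinski) one has $\O_p = \Z_p + \mathfrak{c}_p\,\mathrm{Gor}(\O_p)$, where $\mathrm{Gor}(\O_p) \subseteq \O^\m_p$ is the Gorenstein closure of $\O_p$ and $\mathfrak{c}_p = p^{f}\Z_p$ is its local Brandt invariant, $f \ge 0$. Since $p^{f}$ is central, conjugation by any element of $D_p^\times$ commutes with the operation $\Lambda \mapsto \Z_p + p^{f}\Lambda$, so $\O_p$ has a balanced conjugate inside $\O^\m_p$ as soon as $\mathrm{Gor}(\O_p)$ does. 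Moreover, if $\mathcal{G} \subseteq \O^\m_p$ is a balanced Gorenstein order, then a direct computation of invariant factors shows $\Z_p + p^{f}\mathcal{G}$ is again balanced: writing the shape of $\mathcal{G}$ as $(1,p^{a},p^{b})$ with $b - a \le 1$, the order $\Z_p + p^{f}\mathcal{G}$ has shape $(p^{f},p^{f+a},p^{f+b})$ and level $p^{3f+a+b}$, and the inequality $f+b \le \lceil(3f+a+b)/2\rceil$ defining balancedness follows from $b \le a+1 \le a+f$ when $f \ge 1$ (and is trivial when $f = 0$, since then $\O_p = \mathcal{G}$). Thus it remains to prove that every Gorenstein order of $D_p$ is $D_p^\times$-conjugate to a balanced Gorenstein order contained in $\O^\m_p$.

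For this last step I would invoke Brzezinski's explicit classification of local Gorenstein orders. When $p \mid d$ (so $D_p$ is division) there are only a handful of Gorenstein orders — the maximal order and, for $p$ odd, the orders $\Z_p + \varpi_p \O^\m_p$ of shape $(1,1,p)$ and $\mathcal{O} + p^{k}\O^\m_p$ of shape $(1,p^{k},p^{k})$, $\mathcal{O}$ being the ring of integers of the unramified quadratic extension — and each of them is already balanced. When $p \nmid d$ we take $D_p = M_2(\Q_p)$, $\O^\m_p = M_2(\Z_p)$; Brzezinski's classification says that, up to conjugacy, the Gorenstein orders of level $p^{n}$ are the maximal order, the Eichler order $\O_0(p^{n})$, and, when $n = 2k$, finitely many non-Eichler orders each conjugate to one of the form $\mathcal{O}_K + p^{k}M_2(\Z_p)$ with $K$ an embedded quadratic field. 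The non-Eichler orders already have shape $(1,p^{k},p^{k})$ in $M_2(\Z_p)$ — because a generator of $\mathcal{O}_K$ lying in $(\O^\m_p)_0$ can be taken to be a primitive vector of that lattice — hence are balanced; and the Eichler order $\O_0(p^{n})$, after conjugation by $\mathrm{diag}(p^{\lfloor n/2\rfloor},1)$ (equivalently, after being realized inside the midpoint rather than an endpoint of its geodesic in the Bruhat–Tits tree), acquires the balanced shape $(1,p^{\lfloor n/2\rfloor},p^{\lceil n/2\rceil})$. The part I expect to be genuinely laborious is not conceptual but bookkeeping: verifying the completeness of Brzezinski's list, checking that each chosen representative actually lies in $\O^\m_p$ after the stated conjugation, and dealing with the idiosyncrasies of $p = 2$, where the classification of $2$-adic quaternion orders has a few additional cases; but each of these is a finite verification.
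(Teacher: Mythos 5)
Your overall strategy is the same as the paper's: reduce to a prime-by-prime statement (balancedness is a condition on invariant factors, hence local), reduce to Gorenstein orders by writing $\O_p = \Z_p + p^f\O_p^{\rm gor}$ and checking that adding a central scaling preserves balancedness, and then treat the Gorenstein case through Brzezinski's explicit classification. These two reductions are correct and match the paper (which performs the Gorenstein reduction globally, $\O = \Z + f\O^{\rm gor}$, before localizing -- a cosmetic difference), and your handling of the Eichler order by conjugating with $\mathrm{diag}(p^{\lfloor n/2\rfloor},1)$ is exactly the paper's case (a).

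The genuine gap is in the last step: the list you attribute to Brzezinski is not the classification of Gorenstein orders but essentially that of \emph{Bass} orders (orders containing the maximal order of a quadratic subalgebra), and the Gorenstein class is strictly larger. In Brzezinski's Proposition 5.4 (split case) there are seven families $(a)$--$(d_3')$, and in Proposition 5.6 (division case) four families $(a)$--$(c_2)$; the families $E_{n,s}^{(1)}$, $E_{n,s}^{(-1)}$, $E_{n,s}^{(0)}$, $E_{2,s^+}^{(0)}$ and $\Gamma_{n,s}^{(-1)}$, $\Gamma_{n,s}^{(0)}$, carrying the extra parameter $s$, are Gorenstein but are neither Eichler orders nor of the form $\mathcal{O}_K + p^k\O^\m_p$, so they are absent from your list (as are odd-level non-Eichler cases). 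These omitted families are exactly where the work lies: for each one must choose a conjugate inside $\O^\m_p$ and verify \eqref{e:localsmith} by a Smith normal form computation, and they are not automatically balanced as given -- indeed the paper's case $(d_1)$, i.e.\ $E_{n,s}^{(1)}$, requires the same conjugation by $r_n=\mathrm{diag}(p^{\lfloor n/2\rfloor},1)$ as the Eichler case. You flagged ``verifying the completeness of Brzezinski's list'' as bookkeeping, but the list as you state it is materially incomplete, so the case analysis -- the actual content of the proposition -- does not close without carrying out the verifications for the non-Bass Gorenstein families.
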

To prove the above Proposition, we first of all recall (see, e.g., \cite[Chapter 24]{Voight}) that the order $\O$  can be written as $\O= \Z + f \O^{\rm gor}$ where $f \in \Z$ and $\O^{\rm gor}$ is a \emph{Gorenstein order}. If for some $g \in G(\A_\f)$ we know that $g \cdot\O^{\rm gor} \subseteq \O^\m$ and that the invariant factors of  $g \cdot\O^{\rm gor}$ in $\O^\m$ are $(1, a_2, a_3, a_4)$ then it easily follows that the invariant factors of $g \cdot\O$ in $\O^\m$ are $(1, fa_2, fa_3, fa_4)$. In particular, $g \cdot\O$ is balanced in $\O^\m$ whenever $g \cdot \O^{\rm gor}$ is. So it suffices to prove Proposition \ref{p:ordergenusglobal} for Gorenstein orders.

Being Gorenstein is a local property. Now from the local-global principle for orders (see Section \ref{s:orderbasic}), Proposition \ref{p:ordergenusglobal} follows from the next statement.
\begin{proposition}\label{p:ordergenuslocal}Let $\O_p \subseteq \O_p^\m$ be a Gorenstein order of $D_p$. Then there exists an order $O'_p$ of $D_p$ with the following properties:
\begin{enumerate}
\item $\O'_p \simeq \O_p$,
\item $\O'_p \subseteq \O^\m_p$,
\item If $(m_1, m_2, m_3)$ are the unique triple of non-negative integers such that $m_1 \le m_2 \le m_3$ and there is an isomorphism as   $\Z_p$-modules $$\O^\m_p/\O'_p \simeq (\Z_p / p^{m_1}\Z_p) \times (\Z_p / p^{m_2}\Z_p)  \times (\Z_p / p^{m_3}\Z_p), \text{ then} $$ \begin{equation}\label{e:localsmith}m_3 \le \left\lceil \frac{m_1+m_2+m_3}{2} \right\rceil \text{ holds.}\end{equation}.
\end{enumerate}
\end{proposition}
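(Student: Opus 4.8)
The plan is to reduce Proposition \ref{p:ordergenuslocal} to Brzezinski's explicit classification of local Gorenstein orders in a quaternion algebra over $\Q_p$. First I would recall that every Gorenstein order $\O_p \subseteq \O^\m_p$ falls into one of a small number of families, parametrised by a ``level'' $p^{n_p}$ and, in the ramified (non-Eichler) cases, by one or two additional invariants (essentially a ramification datum for the underlying quadratic order). The key quantity to track is the triple $(m_1,m_2,m_3)$ of elementary divisors of $\O_p$ inside $\O^\m_p$, i.e.\ the local analogue of the shape introduced in Section \ref{s:quatfirst} (up to the harmless factor-of-$2$ ambiguity recorded in Lemma \ref{l:asymp} and Remark \ref{rem:shapeinv}, which at odd $p$ disappears). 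The inequality \eqref{e:localsmith} to be proved is exactly the statement that, writing $M = m_1 + m_2 + m_3 = n_p$, the largest elementary divisor $m_3$ is at most $\lceil M/2 \rceil$; equivalently, $m_3 \le m_1 + m_2$ (since $m_1 \le m_2 \le m_3$ forces $m_3 \le M/2 \Leftrightarrow m_3 \le m_1+m_2$, and rounding up is harmless). So what must be shown is: \emph{within its $D_p^\times$-conjugacy class, every Gorenstein order admits a representative contained in $\O^\m_p$ whose largest elementary divisor does not exceed the sum of the other two.}

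The heart of the argument is a case-by-case verification over Brzezinski's list. For \textbf{Eichler orders} of level $p^{n_p}$, up to conjugacy the standard representative has shape $(0, 0, n_p)$ in $\O^\m_p = M_2(\Z_p)$ — but one may also conjugate it (or rather, choose a different order in the same genus globally) so that, locally, one uses the representative with shape $(0,0,n_p)$; here one checks that the invariant \emph{factors} of $\O$ in $\O^\m$ need not be this, and the real content for Eichler orders is handled by the global statement that one can take $M_3 \mid N_1$. Concretely at the local level: the Eichler order of level $p^n$ has shape $(0, \lfloor n/2\rfloor, \lceil n/2 \rceil)$ after the right conjugation? — I would double-check this, but the point is that the ``balanced'' representative is the one making the two large elementary divisors as equal as possible, and for Eichler orders such a representative always exists because the defining local lattice condition only pins down a flag. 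For the \textbf{ramified Gorenstein (Bass) orders}, I would go through Brzezinski's parametrisation: each such order contains (and is generated over $\Z_p$ by) the ring of integers, or a suitable order, in a ramified or unramified quadratic extension $E/\Q_p$ together with a uniformizer-type element, and the elementary divisors are computed explicitly from the conductor exponent and the ramification type. In each family I would exhibit the conjugate sitting inside $\O^\m_p$ with the most balanced elementary-divisor triple and verify $m_3 \le m_1 + m_2$ by direct inspection; the worst cases (those closest to violating the bound) are the near-Eichler ones, which is exactly why the global Theorem has the $N^{11/24}$ term for near-squarefree levels.

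The main obstacle I anticipate is purely bookkeeping: assembling Brzezinski's classification in a form where the elementary divisors $(m_1,m_2,m_3)$ of each order (and of each of its conjugates landing in $\O^\m_p$) are transparently computable, and then checking \eqref{e:localsmith} uniformly — including getting the parity/rounding right in the $\lceil\cdot\rceil$, and handling the distinction between invariant factors of $\O_p$ versus those of $(\O_p)_0$ (the trace-zero part) correctly via Remark \ref{rem:shapeinv}. A secondary subtlety is that conjugating by $D_p^\times$ (rather than by $\O^\m_p{}^\times$) is what gives us the freedom to rebalance, but one must ensure the conjugate still lies \emph{inside} $\O^\m_p$ — this is the content of condition (2) and is automatic once one writes down the standard lattice-theoretic model, but it should be stated. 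Once the local proposition is in hand, Proposition \ref{p:ordergenusglobal} follows immediately by the local-global correspondence for orders recalled in Section \ref{s:orderbasic} (applying the local statement at each bad prime and taking $\O'_p = \O^\m_p$ elsewhere), and the reduction to Gorenstein orders via $\O = \Z + f\O^{\rm gor}$ has already been explained in the paragraph preceding the proposition.
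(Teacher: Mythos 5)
Your overall strategy is the same as the paper's: reduce to Brzezinski's explicit classification of local Gorenstein orders, conjugate each representative (by an element such as $\mathrm{diag}(p^{\lfloor n/2\rfloor},1)$ in the split case) so that it sits inside $\O^\m_p$ with balanced elementary divisors, and verify \eqref{e:localsmith} by bringing the generator matrix to Smith normal form case by case. However, there are two concrete problems with the proposal as written.

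First, your restatement of the target inequality is wrong. You claim \eqref{e:localsmith} is ``equivalently $m_3 \le m_1+m_2$'' and that ``rounding up is harmless.'' It is not: when $m_1+m_2+m_3$ is odd, $m_3 \le \left\lceil (m_1+m_2+m_3)/2\right\rceil$ is equivalent to $m_3 \le m_1+m_2+1$, and the extra $+1$ is exactly what is needed. The simplest example is the Eichler order of level $p$, whose invariant factors in $\O^\m_p$ are forced to be $(0,0,1)$ for every representative (the index is $p$), so $m_3 = 1 > 0 = m_1+m_2$; more generally the balanced Eichler representative of level $p^n$ has shape $\left(0,\lfloor n/2\rfloor,\lceil n/2\rceil\right)$, which for odd $n$ satisfies \eqref{e:localsmith} with equality but violates $m_3\le m_1+m_2$. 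If you pursued the strengthened inequality literally, the verification would fail at these near-squarefree cases, which (as you yourself note) are the critical ones. Second, the proposal stops at the level of a plan: the entire content of the proposition is the explicit case-by-case work --- choosing, for each of Brzezinski's families $(a)$--$(d_3')$ in the split case and $(a)$--$(c_2)$ in the ramified case, a specific representative in $\O^\m_p$ (conjugating by $r_n=\mathrm{diag}(p^{\lfloor n/2\rfloor},1)$ where needed) and computing its Smith normal form --- and none of that is carried out, nor is any mechanism given that would let one avoid it. You also waver on whether the Eichler case is handled locally or ``by the global statement,'' but the global statement is deduced from this local proposition, so the local rebalancing must be done here.
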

We now prove Proposition \ref{p:ordergenuslocal}. We rely heavily on the work of Brzezinski \cite{brzezinski} who gives a complete list of Gorenstein orders (up to isomorphism) and their resolutions in terms of explicit linear combinations of generators of $\O^\m$. It is therefore easy (albeit tedious) to compute the triple $(m_1, m_2, m_3)$ for each order in his list (by bringing the corresponding matrices to Smith normal form). We do this and observe that most orders in his list already satisfy \eqref{e:localsmith}; the ones that aren't can be conjugated by a simple element and made to satisfy it. We give the key details below, omitting some of the routine calculations.

First consider the case when $D_p \simeq M_2(\Q_p)$. Put $x_1 = \mat{1}{0}{0}{0}$, $x_2 = \mat{0}{1}{0}{0}$, $x_3 = \mat{0}{0}{1}{0}$. Note that $\O_p^\m = \langle 1, x_1, x_2, x_3 \rangle$. According to Prop. 5.4 of \cite{brzezinski}, $\O_p$ is isomorphic to one of the cases $(a) - (d_3')$ described there. We denote $r_n = \mat{p^{\lfloor \frac{n}{2} \rfloor}}{0}{0}{1}$. We write down the required order $\O_p'$ in each case,  using the notation from Proposition 5.4 of \cite{brzezinski}.

Case (a). In this case we take $\O_p' = r_n E_n^{(1)}r_n^{-1}$.

Case (b). In this case we take $\O_p' = E_n^{(-1)}$.

Case (c). In this case we take $\O_p' = E_n^{(0)}$.

Case ($d_1$).  In this case we take $\O_p' = r_n E_{n,s}^{(1)}r_n^{-1}$.

Case ($d_2$).  In this case we take $\O_p' =  E_{n,s}^{(-1)}$.

Case ($d_3$).  In this case we take $\O_p' =  E_{n,s}^{(0)}$.

Case ($d_3'$).  In this case we take $\O_p' =  E_{2,s^+}^{(0)}$.

Next, we consider the case when $p|d$, i.e, $D_p$ is a division algebra. Let $x_1$, $x_2$, $x_3$ be as in \cite[(5.5)]{brzezinski}. Then $\O_p^\m = \langle 1, x_1, x_2, x_3 \rangle$. According to Prop. 5.6 of \cite{brzezinski}, $\O_p$ is isomorphic to one of the cases $(a) - (c_2)$ described there.

Case (a). In this case we take $\O_p' = \Gamma_n^{(-1)}$.

Case (b). In this case we take $\O_p' = \Gamma_n^{(0)}$.

Case ($c_1$).  In this case we take $\O_p' =\Gamma_{n,s}^{(-1)}$.

Case ($c_2$).  In this case we take $\O_p' =  \Gamma_{n,s}^{(0)}$.

In all cases above, the description of $\O_p'$ given in \cite{brzezinski} provides an explicit $\Z_p$-basis for $\O_p'$ in terms of a $\Z_p$-basis for $\O^\m_p.$ We reduce the resulting matrix into Smith normal form via elementary operations, and observe that the invariant factors $(1, p^{m_1}, p^{m_2}, p^{m_3})$ of the resulting matrix always satisfies \eqref{e:localsmith}. (We remark here that in the case $(d_2)$ for $p \nmid d$, and the cases (b), $(c_1)$ for $p | d$, the reference \cite{brzezinski} gives five generators for $\O_p'$. However, once the generator matrix is brought into Smith normal form, we get exactly four non-zero rows; these correspond to a $\Z_p$-basis of the form required.)

This completes the proof of Proposition \ref{p:ordergenuslocal}, and therefore of Proposition \ref{p:ordergenusglobal}.

\section{Amplification}\label{s:amplglobal}
In this Section, we complete the proof of Theorem \ref{t:maingen}. Throughout this section, we assume the setup of Section \ref{s:mainresultsubsec}. We fix an order $O\subseteq \O^\m$ of level $N$, an automorphic form $\phi$ in $V_\pi$ with $$\|\phi\|_2 =1,$$ and a finite dimensional representation $\rho$ of $K_\O$ such that the conditions of Theorem \ref{t:maingen} are satisfied. Given the above data, and some $g \in G(\A)$ satisfying \eqref{e:toprove2}, our goal in this section is to prove
\begin{equation}\label{e:toprovefinal}
|\phi(g) | \ll_{D, \pi_\infty, \eps} N^{\eps} \min(\max(N^{1/3}, N_1^{1/2}), N^{11/24}) \dim(\rho)^{1/2},
\end{equation}
which will complete the proof of Theorem \ref{t:maingen}.  As explained previously, we can and will assume (without loss of generality) that \eqref{e:assump1} and \eqref{e:assump2} hold.
\subsection{Test functions}
 Our main tool is the amplification method. From the adelic point of view, amplification corresponds to an appropriate choice of test function $\kappa$ on $G(\A)$ which increases the contribution of the particular automorphic form $\phi$ in the resulting  pre-trace formula. In this subsection, we describe this test function $\kappa$ (which will depend on $\phi$ and $\O$) and note its key properties.

 Recall that $\f$ denotes the set of finite primes, which we identify with the non-archimedean places of $\Q$. The representation $\rho$ is isomorphic to $\otimes_{p \in \f} \rho_p$ where $\rho_p$ is a representation of $\O_p^\times$ with $\rho_p$ trivial for almost all $p$. We choose a finite subset $S\subset \f$ with the following properties:
 \begin{enumerate}
 \item $S$ contains all primes dividing $dN$,

 \item If $p \notin S$, then $\rho_p$ is trivial.

 \end{enumerate}
For convenience, we denote $G_S = \prod_{p \in S}G_p$, $\Q_S^\times = \prod_{p \in S} \Q_p^\times$, and $\O_S^\times = \prod_{p\in S}\O_p^\times$. Put $\rho_S = \otimes_{p \in S}\rho_p$. So $\rho_S$ is an irreducible representation of $\O_S^\times$ with $\dim(\rho_S) = \dim(\rho)$ and $V_{\rho_S}$ is the subspace of $V_\pi$ generated by the action of $\O_S^\times$ on $\phi$. It follows that $\rho_S$ is unitary with respect to the Petersson inner product.
Let $\ur = \f \setminus S$ be the set of primes not in $S$. We will choose $\kappa$ of the form $\kappa = \kappa_S \kappa_\ur \  \kappa_\infty$.

We define the function $\kappa_S$ on $G_S$ as follows:
$$\kappa_S (g_S)=  \begin{cases}  0 &\text{ if } g_S \notin  \Q_S^\times \O_S^\times, \\ \omega_{\pi}^{-1}(z) \langle \phi, \pi(k) \phi \rangle &\text{ if }  g_S=  zk, \quad z \in \Q_S^\times, \ k \in  \O_S^\times. \end{cases}$$

Our assumptions and basic properties of finite dimensional irreducible representations of compact groups imply that  \begin{equation}\label{kappaN}\begin{split}R(\kappa_S)\phi &:= \int_{\Q_S^\times \bs G_S} \kappa_S(g)(R(g)\phi)  \ dg \\&=  \int_{\O_S^\times} \langle \phi, \rho_S(k) \phi \rangle (\rho_S(k) \phi) \ dk \\&= \frac{\vol(K_\O)}{\dim(\rho)}\phi, \end{split}\end{equation}
where we have used the fact that $\vol(\O_S^\times) = \vol(K_\O)$ (since $\O_p^\times = (\O^\m_p)^\times$ for primes $p \notin S$).   Observe also that the formula in the first line above gives us a self-adjoint, non-negative operator $R(\kappa_S)$  on the space of all automorphic forms on $D^\times(\A)$ which have central character $\omega_\pi$.

Next, we consider the primes $p \in \ur$. Note that $\pi_p$ is unramified for each such prime (indeed for such $p$, $\rho_p$ is trivial and hence $\phi$ is $K_p$-fixed). Let $\mathcal{H}_{\ur}$ be the set of all  compactly supported functions on $\prod_{p \in \ur}\GL_2(\Q_p)$ that are bi-$\GL_2(\Z_p)$ invariant for each $p \in \ur$ and transform under the action of the centre by $\omega_{\pi}^{-1}$. For each positive integer $\ell$ satisfying $(\ell, S)=1$,  define the functions $\kappa_\ell$ in $\mathcal{H}_{\ur}$ as in Section 3.5 of \cite{saha-sup-level-hybrid}; these correspond to the usual Hecke operators $T_\ell$.

Recall that for each $p \in \ur$, we have fixed an isomorphism $\iota_p: D_p  \xrightarrow {\cong} M(2,\Q_p)$ such that $\iota_p(\O_p) = M(2,\Z_p)$. Put $G_\ur:=\prod_{p\in \ur}G_p$, $K_\ur = \prod_{p\in \ur}\O_p^\times$. Using the local isomorphisms $\iota_p$, we identify $\mathcal{H}_{\ur}$ with the set of  compactly supported functions on $G_\ur$ that are bi-$K_\ur$ invariant and transform under the action of the centre by $\omega_{\pi}^{-1}$. (This identification  does not depend on the choice of the local isomorphisms.)  In particular, we can now identify the  functions $\kappa_\ell$ with functions on $G_\ur$.  For each $\kappa \in \mathcal{H}_{\ur}$, we obtain in the usual manner an operator $R(\kappa)$ on the space of all automorphic forms on $D^\times(\A)$ which have central character $\omega_\pi$.  We have the standard involution $\kappa \mapsto \kappa^*$ on $\mathcal{H}_{\ur}$ given by $\kappa^*(g) = \overline{\kappa(g^{-1})}$ which makes $R(\kappa^*)$  the adjoint of $R(\kappa)$. Given elements $\kappa_1$ and $\kappa_2$ in $\mathcal{H}_{\ur}$, we define their convolution $\kappa_1 \ast \kappa_2 \in \mathcal{H}_{\ur}$ to be the function defined as follows: \begin{equation}(\kappa_1 \ast \kappa_2) (h) = \int_{Z\bs G} \kappa_1(g^{-1})\kappa_2(gh) dg. \end{equation} Note that   $R(\kappa_1 \ast \kappa_2) = R(\kappa_1)R(\kappa_2)$.

For each positive integer $m$ such that $(m,S)=1$, we let $\lambda_\pi(m)$ be the coefficient of $m^{-s}$ in the Dirichlet series corresponding to $L(s, \pi)$, where we normalize the $L$-function to have functional equation $s \mapsto 1-s$.

Let $\Lambda \ge 1$ be a real number. We define $$\P= \{\ell: \ell \text{ prime, } \ell \in \ur, \ \Lambda \le \ell \le 2\Lambda\}. $$
Define for each integer $r$ satisfying $(r,S)=1$, $c_r = \frac{|\lambda_\pi(r)|}{\lambda_\pi(r)}.$
We put $\delta_\ur = \sum_{r \in \P} c_r \kappa_r$, and  $\gamma_\ur = \sum_{r \in \P} c_{r^2} \kappa_{r^2}$. Finally, put  $$\kappa_\ur = \delta_\ur \ast \delta_\ur^* + \gamma_\ur \ast \gamma_\ur^*.$$ It is clear that $R(\kappa_\ur)$ is a normal, non-negative operator. Moreover,  by a standard argument (see (5.6-5.8) of \cite{blomer-harcos-milicevic} and Section 3.7 of \cite{saha-sup-level-hybrid}) we get that \begin{equation}\label{kappaur}R(\kappa_\ur )\phi = \lambda_\ur \phi, \quad \lambda_\ur \gg_\eps \Lambda^{2-\eps}.\end{equation} Furthermore, the well-known relation $$\kappa_m \ast \kappa_n^\ast = \sum_{t|
\gcd(m,n)} \left(\prod_{p|t}\omega_{\pi_p}(t)\right) \left(\prod_{p|n}\omega_{\pi_p}^{-1}(n)\right) \kappa_{mn/t^2},$$
gives us that \begin{equation}\label{e:ylkappa}\kappa_\ur = \sum_{1 \le l \le 16\Lambda^4} y_l \kappa_l\end{equation} where the complex numbers $y_l$ satisfy:

\begin{equation}\label{e:ylcases}|y_l| \ll \begin{cases}\Lambda, & l=1,\\1, &l =  \ell_1 \ell_2 \text{ or } l = \ell_1^2\ell_2^2 \text{ with } \ell_1, \ell_2 \in \P, \\ 0, &\text{otherwise.}\end{cases}\end{equation}

Finally, we consider the infinite place. As we are not looking for a
bound in the archimedean aspect, the choice of $\kappa_\infty$ is
unimportant. However for definiteness, let us fix the  function
$\kappa_\infty$ on $G_\infty$ as in \cite{saha-sup-level-hybrid} (see end of Section 3.5). In particular this choice has the property that $$\kappa_\infty(g) \neq 0 \Rightarrow
\det(\iota_\infty(g_\infty))>0, \ u(\iota_\infty(g_\infty)) \le 1$$ where for $g \in \GL_2(\R)^+$, we let $u(g) = \frac{|g(i) - i|^2}{4 \Im(g(i))}$ denote the hyperbolic distance from $g(i)$ to $i$.
Furthermore, the operator $R(\kappa_\infty)$ is self-adjoint, non-negative  and satisfies
\begin{equation}\label{kappainf}R(\kappa_\infty)\phi_\infty =
\lambda_\infty \phi_\infty, \quad \lambda_\infty \gg_{\pi_\infty} 1.\end{equation}

\subsection{The automorphic kernel and spectral expansion}Note that the function $\kappa$ on $G(\A)$ defined in the previous subsection transforms by $\omega_\pi^{-1}$ under the action of the centre $\A^\times$. In particular, $\kappa$ is $\Q^\times$-invariant. Define the automorphic kernel $K_\kappa(g_1, g_2)$ for $g_1, g_2 \in G(\A)$
via $$K_\kappa(g_1, g_2) = \sum_{\gamma \in  G'(\Q)} \kappa(g_1^{-1} \gamma
g_2).$$

Using \eqref{e:volumeorder}, \eqref{kappaN}, \eqref{kappaur}, and \eqref{kappainf}, we see that $$R(\kappa)\phi = \lambda \phi, \quad \text{where } \lambda \gg_{\pi_\infty,  \eps} \frac{N^{-1-\eps} \Lambda^{2-\eps}}{\dim(\rho)}.$$

Now, spectrally expanding $K_\kappa( g  , g)$  and using the non-negativity of the operator $R(\kappa)$, we obtain,

\begin{equation}\label{keyineq1}\dim(\rho)^{-1} N^{-1-\eps}\Lambda^{2-\eps} |\phi(g)|^2 \ll_{\pi_\infty,  \eps}    K_\kappa(g,g).\end{equation}

Write $K_S = \prod_{p \in S}K_p$.  Now, let $g \in G(\A)$ satisfy \eqref{e:toprove2}; so $g_S \in K_S$. Put $z = g_\infty(i) \in \J$. Using \eqref{e:ylkappa}, we see that \begin{equation}\label{keyineqha}K_\kappa(g,g) =  \sum_{1 \le l \le 16\Lambda^4} y_\ell\sum_{\gamma \in G'(\Q)} \kappa_S(g_S^{-1}\gamma_S g_S) \kappa_\ell(\gamma) \kappa_\infty(g_\infty^{-1}\gamma_\infty g_\infty).\end{equation}

\begin{definition}\label{d:four}Given a lattice $\L$, $z \in \H$, and a positive integer $\ell$, let  $S(\ell, \L;  z)$ be the set of $\gamma \in G'(\Q)$ satisfying the following properties:

\begin{enumerate}

\item  $\gamma_p \in \Q_p^\times \L_p(\ell)$ for all $p \in \f$, where $\L_p(\ell) = \{ \alpha \in \L_p: \nr(\alpha) \in \ell \Z_p^\times\}$.

\item $\det(\iota_\infty(\gamma_\infty))>0$, $u(z, \iota_\infty(\gamma_\infty) z) \le 1$.
\end{enumerate}
\end{definition}

 Our definition of $\kappa$  implies that if $\kappa_S(g_S^{-1}\gamma_S g_S) \kappa_\ell(\gamma) \kappa_\infty(g_\infty^{-1}\gamma_\infty g_\infty) \neq 0$ then we must have $\gamma \in S(\ell, \ g\cdot\O; \ z)$.  Since $\kappa_\ell(\gamma) \le \ell^{-1/2}$, and $|\kappa_S| \le 1$, $|\kappa_\infty| \le 1$, the triangle inequality on \eqref{keyineqha}, together with \eqref{e:ylcases} and \eqref{keyineq1}, now gives us

 \begin{equation}\label{keyineq2}\begin{split}
|\phi(g)|^2 &\ll_{\pi_\infty, \eps} \dim(\rho) N^{1+\eps} \Lambda^{-2+\eps} \sum_{1 \le \ell \le 16\Lambda^4} \frac{|y_\ell|}{\sqrt{\ell}} \ \big\vert S(\ell, \ g \cdot \O;  z)\big\vert \\ &\ll \dim(\rho) N^{1+\eps} \Lambda^{\eps}\bigg(\Lambda^{-1}  \ \big\vert S(1, \ g \cdot \O;  z)\big\vert + \Lambda^{-3} \sum_{\ell_1, \ell_2 \in \P} \ \big\vert S(\ell_1 \ell_2, \ g \cdot \O;  z)\big\vert  \\ & \quad \quad \quad \quad \quad \quad \quad \quad +     \Lambda^{-4} \sum_{\ell_1, \ell_2 \in \P} \ \big\vert S(\ell_1^2 \ell_2^2, \ g \cdot \O;  z)\big\vert \bigg).\end{split}
\end{equation}
\subsection{The endgame}
We can now wrap up the proof, beginning with a simple proposition that links it all back to Section \ref{s:counting}.

\begin{proposition} For any lattice $\L$ and non-zero integer $\ell$, the natural map $G(\Q) \rightarrow G'(\Q)$ induces a bijection of finite sets $\pm1 \bs \L(\ell;z,1) \cong S(\ell, \L; z).$ In particular $$|S(\ell, \L; z)| = \frac12 | \L(\ell;z,1)|.$$
\end{proposition}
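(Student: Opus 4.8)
The plan is to exhibit the bijection directly and then read off the count. We may assume $\ell\geq 1$: if $\ell<0$ then $\L(\ell;z,1)$ is empty (an element of negative reduced norm maps $\H$ off itself, so $u(z,\iota_\infty(\alpha)z)$ is not defined) and $S(\ell,\L;z)$ is empty too, since condition (2) of Definition~\ref{d:four} forces the determinant at $\infty$ to be positive. I would begin with two preliminaries. First, by Hilbert's Theorem~90 the map $G(\Q)=D^\times(\Q)\to G'(\Q)$ is surjective, so every $\gamma\in S(\ell,\L;z)$ has a representative $\tilde\gamma\in D^\times(\Q)$, unique up to multiplication by $\Q^\times$. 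Second, for $\alpha\in D^\times(\Q)$ one has $\det(\iota_\infty(\alpha))=\nr(\alpha)$, and for $t\in\Q^\times$ the scalar $t$ acts trivially on $\H$ while $\nr(t\alpha)=t^2\nr(\alpha)$; hence both the sign of $\det(\iota_\infty(\cdot))$ and (when that sign is positive) the quantity $u(z,\iota_\infty(\cdot)z)$ descend to $G'(\Q)$, which is what makes conditions (1)--(2) of Definition~\ref{d:four} well posed.

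Next I would set up the map. Given $\alpha\in\L(\ell;z,1)$, we have $\alpha\neq 0$ because $\nr(\alpha)=\ell\neq 0$, so $\alpha\in D^\times(\Q)$; its class $[\alpha]\in G'(\Q)$ lies in $S(\ell,\L;z)$ since $\alpha_p\in\L_p$ with $\nr(\alpha_p)=\ell\in\ell\Z_p^\times$ gives condition (1), and $\det(\iota_\infty(\alpha_\infty))=\ell>0$ together with $u(z,\iota_\infty(\alpha)z)\le 1$ gives condition (2). As $-\alpha\in\L(\ell;z,1)$ and $[-\alpha]=[\alpha]$, this descends to a map $\pm1\bs\L(\ell;z,1)\to S(\ell,\L;z)$, which is injective: if $[\alpha]=[\beta]$ then $\beta=t\alpha$ with $t\in\Q^\times$, and comparing reduced norms gives $t^2\ell=\ell$, so $t=\pm1$.

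The heart of the argument is surjectivity. Fix $\gamma\in S(\ell,\L;z)$ and a representative $\tilde\gamma\in D^\times(\Q)$. For each prime $p$, condition (1) furnishes $t_p\in\Q_p^\times$ with $t_p^{-1}\tilde\gamma_p\in\L_p$ and $\nr(t_p^{-1}\tilde\gamma_p)\in\ell\Z_p^\times$; taking $p$-adic valuations in $\nr(t_p^{-1}\tilde\gamma_p)=t_p^{-2}\nr(\tilde\gamma)$ forces $v_p(t_p)=\tfrac12\bigl(v_p(\nr(\tilde\gamma))-v_p(\ell)\bigr)=:n_p$, an integer vanishing for all but finitely many $p$. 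Put $r=\prod_p p^{n_p}\in\Q_{>0}$ and $\alpha_0:=r^{-1}\tilde\gamma$. Then $v_p(r^{-1}t_p)=0$, so $r^{-1}t_p\in\Z_p^\times$ and $\alpha_{0,p}=(r^{-1}t_p)(t_p^{-1}\tilde\gamma_p)\in\L_p$ for every $p$; by the local--global description \eqref{e:localization} of $\L$ this gives $\alpha_0\in\L$. Computing valuations once more, $v_p(\nr(\alpha_0))=-2n_p+v_p(\nr(\tilde\gamma))=v_p(\ell)$ for all $p$, so $\nr(\alpha_0)=\pm\ell$; and condition (2), via the second preliminary, pins the sign down to $\nr(\alpha_0)=\ell$ (using $\ell>0$). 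Finally $\det(\iota_\infty(\alpha_0))=\ell>0$, and since $\alpha_0$ differs from a positive-determinant representative of $\gamma$ by a scalar, $u(z,\iota_\infty(\alpha_0)z)=u(z,\iota_\infty(\gamma_\infty)z)\le 1$. Hence $\alpha_0\in\L(\ell;z,1)$ and $[\alpha_0]=\gamma$, proving surjectivity.

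Combining these steps gives the asserted bijection $\pm1\bs\L(\ell;z,1)\cong S(\ell,\L;z)$; both sets are finite by Proposition~\ref{t:counting} (with $L=\ell$). Since $\nr(\alpha)=\ell\neq 0$ precludes $\alpha=-\alpha$, the group $\{\pm1\}$ acts freely on $\L(\ell;z,1)$, so every orbit has size two and $|S(\ell,\L;z)|=|\pm1\bs\L(\ell;z,1)|=\tfrac12|\L(\ell;z,1)|$. I expect the one delicate point to be the surjectivity step: choosing the single rational scalar $r$ that simultaneously moves $\tilde\gamma$ into $\L$ globally — precisely where \eqref{e:localization} is invoked — and makes its reduced norm equal to $\ell$ exactly, with the sign extracted from the archimedean positivity in condition (2).
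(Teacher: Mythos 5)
Your proof is correct and follows essentially the same route as the paper: the map and its injectivity via the norm comparison $t^2\ell=\ell$, and surjectivity by rescaling a representative with a single rational scalar built from the local scalars $t_p$ of condition (1), then using \eqref{e:localization} and the archimedean positivity to pin down $\nr=\ell$. The only cosmetic difference is that you construct the scalar explicitly from the valuations $v_p(t_p)$, where the paper invokes strong approximation for $\Q$ to the same effect.
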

\begin{proof}
It is clear that any element of  $\L(\ell;z,1)$ satisfies the two conditions defining $S(\ell, \L; z)$. Furthermore, if two elements $\gamma_1$, $\gamma_2$ in $\L(\ell;z,1)$ represent the same class in $G'(\Q)$, then putting $t \gamma_1 = \gamma_2$, we obtain (taking norms) that $t^2=1$ which means that $\gamma_1 = \pm \gamma_2$. Therefore we get an injective map $\pm1 \bs \L(\ell;z,1) \rightarrow S(\ell, \L; z)$. To complete the proof, we need to show that this map is surjective. Let $\gamma \in G(\Q)$ be an element whose image in $G'(\Q)$ lies in $S(\ell, \L; z)$. We need to prove that there exists $t_0 \in \Q^\times$ such that $t_0 \gamma \in \L(\ell;z,1)$. By Definition \ref{d:four}, we can find for each prime $p$, an element $t_p \in \Q_p^\times$ such that $\gamma_p \in t_p \L_p(\ell)$ for all primes $p$. By strong approximation for $\Q$, we can choose $t_0\in \Q$ such that $t_0 t_p \in \Z_p^\times$ for all primes $p$. Now consider the element $t_0 \gamma$. For each prime $p$, we have that the $p$-component of $t_0\gamma$ lies in $\L_p$. So by \eqref{e:localization}, we have $t_0 \gamma \in \L$. Furthermore, we have that the $p$-component of $\nr(t_0 \gamma)$ lies in $\ell \Z_p^\times$ for all primes $p$, and hence $\nr(t_0 \gamma) = \pm \ell$. But by assumption $\nr(\gamma)>0$. Hence $\nr(t_0 \gamma) = \ell$. It follows that $t_0 \gamma \in \L(\ell)$. Since $u(z, \iota_\infty(\gamma) z) \le 1$, it is now immediate that $t_0 \gamma \in \L(\ell;z,1)$.
\end{proof}

Now let us go back to \eqref{keyineq2}. We will prove two bounds. For the first, we choose $\Lambda = \frac12 C^{1/4} N^{1/12}$ and apply Proposition \ref{t:countingnew} to \eqref{keyineq2}. (Note here that $g \cdot \O$ has the same level $N$ as $\O$). This gives us \begin{equation}\label{e:final1}|\phi(g)| \ll_{\pi_\infty, \eps} \dim(\rho)^{1/2} N^{\frac{11}{24} + \eps}.\end{equation}

For the second bound, we apply Proposition \ref{t:counting} to \eqref{keyineq2}.
By the assumption \eqref{e:toprove2}, the orders $\O$ and $g\cdot \O$ have the same shape, which we denote by $(M_1, M_2, M_3)$. Furthermore, by \eqref{e:assump1}, $M_1M_2 \gg N/N_1$. Now, applying Proposition \ref{t:counting} to \eqref{keyineq2}, we get $$|\phi(g)|^2 \ll_{\pi_\infty, \eps} \dim(\rho) \  N^{1 + \eps} \left(\Lambda^{-1} + \frac{\Lambda^2}{N} + \frac{N_1}{N} \right).$$ Choosing $\Lambda = N^{1/3}$ above gives us
\begin{equation}\label{e:final2}|\phi(g)| \ll_{\pi_\infty, \eps} \dim(\rho)^{1/2} N^\eps\max(N^{1/3}, N_1^{1/2}).\end{equation}
Combining \eqref{e:final1} and \eqref{e:final2}, we obtain \eqref{e:toprovefinal}.

 \bibliography{sup-compact}

\end{document}